\DeclareRobustCommand*{\bfseries}{%
	\not@math@alphabet\bfseries\mathbf
	\fontseries\bfdefault\selectfont
	\boldmath
}
\numberwithin{equation}{section} 
\newtheorem{theorem}[equation]{Theorem}
\newtheorem*{theoremM}{Theorem (McMullen)}
\newtheorem{proposition}[equation]{Proposition}
\newtheorem{lemma}[equation]{Lemma}
\newtheorem{corollary}[equation]{Corollary}
 \newtheorem*{thm:taut=twisted}{Theorem \ref{thm:taut=twisted}}
 \newtheorem*{cor:Alex=taut}{Corollary \ref{cor:Alex=taut}}
 \newtheorem*{prop:twisted_factors}{Proposition \ref{prop:twisted_factors}}
  \newtheorem*{thm:main}{Theorem \ref{thm:main}}
  \newtheorem*{cor:Teich:twisted}{Corollary \ref{cor:Teich:twisted}}
  \newtheorem*{prop:LN:orientable}{Corollary \ref{cor:orientable:classes}}
  \newtheorem*{prop:LN:nonorientable}{Proposition \ref{prop:no:orientable:class}}
   \newtheorem*{thm:characterisation}{Theorem \ref{thm:characterisation}}
\theoremstyle{definition}
\newtheorem{definition}[equation]{Definition}
\newtheorem{remark}[equation]{Remark}
\newtheorem{example}[equation]{Example}
\newtheorem{construction}[equation]{Construction}
\newtheorem*{remark*}{Remark}
\DeclareMathOperator{\inter}{int}
\newcommand{\ZHM}{\zz \lbrack H_M \rbrack}
\newcommand{\ZHN}{\zz \lbrack H_N \rbrack}
\newcommand{\ZH}{\zz \lbrack H \rbrack}
\newcommand{\Fit}{\mathrm{Fit}} %maximal torsion-free abelian cover
\newcommand{\bez}{-}
\newcommand{\zz}{\mathbb{Z}}
\newcommand{\rr}{\mathbb{R}}
\newcommand{\V}{\mathcal{V}}
\newcommand{\bigslant}[2]{{\raisebox{.2em}{$#1$}\left/\raisebox{-.2em}{$#2$}\right.}}
\newcommand{\Mab}{M^{fab}}
\newcommand{\Vab}{\mathcal{V}^{fab}}
\newcommand{\hVab}{\hat{\mathcal{V}}^{fab}}
\newcommand{\Vuniv}{\widetilde{\mathcal{V}}}
\newcommand{\Vor}{\mathcal{V}^{or}}
\newcommand{\Mor}{M^{or}}
\newcommand{\face}[1]{\raisebox{.03em}{\large{$\mathtt{#1}$}}}
\newcommand{\Thnorm}[1]{\left\lVert #1 \right\rVert_{\mathrm{Th}}}
\newcommand{\Anorm}[1]{\left\lVert #1 \right\rVert_{\mathrm{A}}}
\algrenewcommand{\algorithmiccomment}[1]{\hspace*{\fill}
	 \color{gray}\small $\#$  #1 \color{black}\normalsize}
\begin{document}

\numberwithin{equation}{section}
\title{The taut polynomial and the Alexander polynomial}

\author{Anna Parlak}
\address{Mathematics Institute, University of Warwick, Coventry CV4 7AL, United	Kingdom\\
\indent Mathematical Institute, University of Oxford, Oxford, OX2 6GG, United Kingdom}
\email{anna.parlak@gmail.com}

\thanks{This work was supported by The Engineering and Physical Sciences Research Council (EPSRC) under grant EP/N509796/1 studentship 1936817 and by the Simons Investigator Award 409745 of Vladimir Marković.}

\keywords{3-manifolds, veering triangulations, taut polynomial, Alexander polynomial, Teichm\"uller polynomial, pseudo-Anosov, Fox calculus} 
\subjclass[2020]{Primary 57K31; Secondary 37E30}

 \begin{abstract}
Landry, Minsky and Taylor defined the taut polynomial of a veering triangulation. Its specialisations generalise the Teichm\"uller polynomial of a fibred face of the Thurston norm ball. We prove that the taut polynomial of a veering triangulation is equal to a certain twisted Alexander polynomial of the underlying manifold. Thus the Teichm\"uller polynomials are just specialisations of twisted Alexander polynomials.
We also give formulas relating the taut polynomial and the untwisted Alexander polynomial. There are two formulas, depending on whether the maximal free abelian cover of a veering triangulation is edge-orientable or not.

Furthermore, we consider 3-manifolds obtained by Dehn filling a veering triangulation. In this case we give formulas that relate the specialisation of the taut polynomial under a Dehn filling and the Alexander polynomial of the Dehn-filled manifold. This extends a theorem of McMullen connecting the Teichm\"uller polynomial and the Alexander polynomial to the nonfibred setting, and improves it in the fibred case. We also prove a sufficient and necessary condition for the existence of an orientable fibred class in the cone over a fibred face of the Thurston norm ball.

 \end{abstract}

\maketitle%
\setcounter{tocdepth}{1}
\tableofcontents

\vspace{-1cm}
	\section{Introduction}
	Ian Agol introduced (transverse taut) \emph{veering  triangulations} as canonical triangulations of fully-punctured surgery parents of pseudo-Anosov mapping tori \cite[Section~4]{Agol_veer}. Such 3-manifolds are equipped with the \emph{suspension flow} of a pseudo-Anosov homeomorphism, and the derived veering triangulation  combinatorially encodes the dynamics of this flow. More generally, in unpublished work Agol and Gu\'eritaud showed that a veering triangulation can be derived from any \emph{pseudo-Anosov flow without perfect fits}. 	%Suspension flows of pseudo-Anosov homeomorphisms of closed surfaces are special examples of such flows \cite[p. 199]{Fenley_QG_flows}. 
	In particular, veering triangulations exist also on 3-manifolds that do not fibre over the circle \cite[Section 4]{veer_strict-angles}.	Conjecturally all of them come from the Agol-Gu\'eritaud's construction \cite{SchleimSegLinks}. Due to their canonicity, veering triangulations are excellent tools to study pseudo-Anosov flows combinatorially. This line of research led to new results concerning the exponential growth rate of closed orbits of semiflows on sutured manifolds \cite{LMT_flow} and Birkhoff sections for pseudo-Anosov flows~\cite{Tsang-Birkhoff}.
	
	More generally, veering triangulations have important links to the dynamics on 3-manifolds \cite{LMT_flow, LMT, SchleimSegLinks}, as well as to
	hyperbolic geometry \cite{explicit_angle, FuterTaylorWorden, Gueritaud_CT, veer_strict-angles, Worden_statistics} and the Thurston norm \cite{Landry_stable, Landry_homology_isotopy, Landry_branched}. Recently Landry, Minsky and Taylor introduced two polynomial invariants of veering triangulations --- the taut polynomial and the veering polynomial \cite{LMT}. The former can be used to generalise McMullen's Teichm\"uller polynomial \cite{McMullen_Teich} to the nonfibred setting, and is the main object of study in this paper.
	
	Our goal is to derive formulas relating the taut polynomial of a veering triangulation and a certain twisted Alexander polynomial of the underlying manifold. 	
	Throughout the paper we assume that $M$ is a connected 3-manifold equipped with a finite veering triangulation~$\mathcal{V}$.  This  implies that $M$ is a hyperbolic 3-manifold with finitely many torus cusps; see \cite[Theorem 1.5]{veer_strict-angles} for the proof of hyperbolicity and \cite[Proposition 10]{Lack_taut} for an explanation of why the links of vertices of the triangulation have to be tori.  By truncating the cusps of $M$ we obtain a 3-manifold with finitely many toroidal boundary components. We freely alternate between the cusped and the compact (truncated) model of the manifold, and we denote them both by $M$.
	Let
	\[H_M = \bigslant{H_1(M;\zz)}{\text{torsion}}.\]
 Denote by $\Mab$ the maximal free abelian cover of $M$.  The \emph{Alexander polynomial} \mbox{$\Delta_M \in \ZHM$}  of $M$ is an invariant of the homology module  $H_1(M^{fab};\zz)$; see Subsection~\ref{subsec:twisted}. More generally, in this subsection we  consider \emph{twisted Alexander polynomials} $\Delta_M^{\varphi\otimes\pi} \in \ZHM$ associated to tensor representations of $\pi_1(M)$.
 The  \emph{taut polynomial} $\Theta_{\mathcal{V}} \in \ZHM$ of $\mathcal{V}$ depends on the veering triangulation~$\Vab$ of $\Mab$ induced by $\mathcal{V}$; see Subsection \ref{subsec:taut}. 

We distinguish two types of veering triangulations: \emph{edge-orientable} ones and \emph{not edge-orientable} ones. The \emph{edge-orientation double cover}  $\mathcal{V}^{or} \rightarrow \mathcal{V}$  determines the \emph{edge-orientation homomorphism} $\omega: \pi_1(M) \rightarrow \lbrace -1, 1 \rbrace$. First we show that the taut polynomial of $\mathcal{V}$ is equal to the twisted Alexander polynomial $\Delta_M^{\omega \otimes \pi}$.
\begin{thm:taut=twisted}
Let $\mathcal{V}$ be finite a veering triangulation of a connected 3-manifold~$M$. 
Then
\[\Theta_{\mathcal{V}} = \Delta_M^{\omega \otimes \pi}\]
where $\omega: \pi_1(M) \rightarrow \lbrace -1, 1\rbrace$ is the edge-orientation homomorphism of $\mathcal{V}$ and \linebreak $\pi:\pi_1(M) \rightarrow H_M$ is the natural projection.
\end{thm:taut=twisted}

Twisted Alexander polynomials form a vast class of polynomial invariants that have been quite intesively studied; see the survey of Friedl and Vidussi \cite{FriedlVidussi_survey}. Theorem \ref{thm:taut=twisted} implies that various algebraic properties of the taut polynomial can be deduced from the algebraic properties of twisted Alexander polynomials. For instance, in Corollary~\ref{cor:symmetry} we show that the taut polynomial is \emph{symmetric}. 

Furthermore, Theorem \ref{thm:taut=twisted}  has computational consequences. It implies that the taut polynomial, like every twisted Alexander polynomial, can be computed  from the presentation of the fundamental group of the manifold using \emph{Fox calculus} \cite{FoxCalculus1, FoxCalculus5}.
An algorithm to compute the taut polynomial from its original definition was previously known~\cite{taut_veer_teich} and implemented~\cite{VeeringGitHub}. Given a veering triangulation with $n$ tetrahedra it required finding the greatest common divisor of the maximal minors of  a certain matrix of dimension $n\times (n+1)$ with coefficients in $\ZHM$ \cite[Corollary 5.7]{taut_veer_teich}. Unfortunately, computing determinants of matrices with entries in $\ZHM$ is slow, thus having to compute as many as $n+1$ determinants of $n\times n$ matrices was an obvious drawback of this algorithm.  The major advantage of the Fox calculus approach is that the fundamental group of a 3-manifold underlying a veering triangulation with $n$ tetrahedra typically has a presentation with less than $n+1$ generators. Thus the matrices one needs to work with have lower dimension.  Computation of the taut polynomial using Fox calculus has been implemented \cite{VeeringGitHub} and it indeed proved to be much faster than the original computation.

One of the open problems in the theory of veering triangulations is to find sufficient conditions for the existence of a veering triangulations on a given cusped hyperbolic 3-manifold. We still do not know, for instance, whether the Whitehead link has any veering triangulations; it is only conjectured that it does not. An interesting consequence of Theorem \ref{thm:taut=twisted} is that for any 3-manifold $M$ there is only finitely many potential candidates for the taut polynomial of a veering triangulation of $M$; see Corollary \ref{cor:finitely:many}. For the Whitehead link these would be: \begin{gather*}(ab - 1)  (ab^2 - 1), \hspace{2cm}
(ab + 1)  (ab^2 + 1), \\
(ab + 1) (ab^2 - 1), \hspace{2cm} (ab - 1)  (ab^2 + 1),\end{gather*}
where the first one is just the (untwisted) Alexander polynomial. So even though we do not know if the Whitehead link has any veering triangulation, we know what its taut polynomial can be if it has one.

It is also conjectured that any 3-manifold admits only finitely many (possibly zero) veering triangulations. This finiteness does not follow from the  result of Landry-Minsky-Taylor \cite[Theorem 5.12]{LMT} connecting veering triangulations to faces of the Thurston norm ball, because there are veering triangulations which determine  \emph{empty} faces of the Thurston norm ball (they do not carry any surfaces). Thus compactness of the Thurston norm ball is not sufficient here.
Observe that Theorem \ref{thm:taut=twisted} implies that if there is a 3-manifold~$M$ with infinitely many veering triangulations, then infinitely many of them must have the same taut polynomial. 

Finally, Theorem \ref{thm:taut=twisted} can be used to derive a relation between the taut polynomial and the (untwisted) Alexander polynomial when the triangulation $\mathcal{V}^{fab}$ of the maximal free abelian cover is edge-orientable. In this case the edge-orientation homomorphism $\omega: \pi_1(M) \rightarrow \lbrace -1, 1\rbrace$ factors through $\sigma: H_M \rightarrow \lbrace -1, 1 \rbrace$; see Lemma \ref{lem:edge-orient-factors}. Thus, by  Theorem~\ref{thm:taut=twisted}, $\Theta_{\mathcal{V}}$ differs from $\Delta_M$ only by switching the signs of variables with a nontrivial image under~$\sigma$.

\begin{cor:Alex=taut}
	Let $\mathcal{V}$ be a finite veering triangulation of a connected 3-manifold~$M$. Set $r= \mathrm{rank} \ H_M$ and let $(h_1, \ldots, h_r)$ be any basis of $H_M$. If $\Vab$ is edge-orientable then
	\[\Theta_{\mathcal{V}}(h_1, \ldots, h_r) = \Delta_M(\sigma(h_1)\cdot h_1, \ldots, \sigma(h_r)\cdot h_r),\]
	where $\sigma: H_M \rightarrow \lbrace -1, 1\rbrace$ is the factor of the edge-orientation homomorphism of $\mathcal{V}$.
\end{cor:Alex=taut}

When the triangulation of the maximal free abelian cover is not edge-orientable we relate $\Theta_{\mathcal{V}}$ and $\Delta_M$ using a polynomial invariant $\hat{\Delta}_{\V}\in \ZHM $ derived from the edge-orientation double cover of $\mathcal{V}$. In Remark \ref{remark:NEOtwisted} we explain that $\hat{\Delta}_{\V}$ can also be seen as a twisted Alexander polynomial of $M$. 
\begin{prop:twisted_factors}
	Let $\mathcal{V}$ be a finite veering triangulation of a connected 3-manifold~$M$. If $\Vab$ is not edge-orientable then
	\[\hat{\Delta}_{\V} = \Delta_M \cdot \Theta_{\mathcal{V}}.\]
	\end{prop:twisted_factors}

We then move on to analysing  a (not necessarily closed) Dehn filling~$N$ of~$M$. We set
\[H_N = \bigslant{H_1(N;\zz)}{\text{torsion}}.\] 
The inclusion of $M$ into $N$ determines an epimorphism $i_\ast: H_M \rightarrow H_N$. We compare the \emph{specialisation} $i_\ast(\Theta_{\mathcal{V}})$ of the taut polynomial of $\mathcal{V}$ \emph{under the Dehn filling} and the Alexander polynomial of $N$. In order to do that, we need to consider an intermediate free abelian cover $M^N$ of $M$ with the deck group isomorphic to $H_N$. It admits a veering triangulation $\mathcal{V}^N$. If this triangulation is edge-orientable, then the edge-orientation homomorphism factors through $\sigma_N: H_N \rightarrow \lbrace -1, 1 \rbrace$; see Lemma \ref{lem:edge-orient-factors}. Using Corollary~\ref{cor:Alex=taut} and the results of Turaev \cite[Section 4]{Turaev_Alex}, we prove the following theorem.

\begin{thm:main}
Let $\mathcal{V}$ be a finite veering triangulation of a connected 3-manifold~$M$. Let~$N$ be a Dehn filling of~$M$ such that $s=\mathrm{rank} \ H_N$ is positive. Denote by $\ell_1, \ldots, \ell_k$ the core curves of the filling solid tori in $N$ and by $i_\ast: H_M \rightarrow H_N$ the epimorphism induced by the inclusion of $M$ into $N$.	
Assume that the veering triangulation $\mathcal{V}^N$ is edge-orientable and that for every $j \in \lbrace 1, \ldots, k \rbrace$ the class $\lbrack \ell_j \rbrack \in H_N$ is nontrivial. Let \mbox{$\sigma_N: H_N \rightarrow \lbrace -1, 1 \rbrace$} be the homomorphism through which the edge-orientation homomorphism of $\mathcal{V}$  factors.
\begin{enumerate}[leftmargin=0.5cm, label={{\Roman*}}. ]
	\item $b_1(M) \geq 2$ and
	\begin{enumerate}[leftmargin=0.5cm]
		\item $s \geq 2$. Then
		\[i_\ast(\Theta_\mathcal{V})(h_1, \ldots, h_s) = \Delta_N (\sigma_N(h_1)\cdot h_1, \ldots, \sigma_N(h_s)\cdot h_s)\cdot \prod\limits_{j=1}^k (\lbrack \ell_j \rbrack - \sigma_N(\lbrack \ell_j \rbrack)).\]
		\item $s  =1$. Let $h$ denote the generator of $H_N$.
		\begin{itemize}[leftmargin=0.5cm]
			\item If $\partial N \neq \emptyset$ then 
			\[i_\ast(\Theta_\mathcal{V})(h) = (h-\sigma_N(h))^{-1}\cdot \Delta_N (\sigma_N(h)\cdot h) \prod\limits_{j=1}^k (\lbrack \ell_j \rbrack - \sigma_N(\lbrack \ell_j \rbrack)).\]
			\item If $N$ is closed then 
			\[i_\ast(\Theta_\mathcal{V})(h) = (h-\sigma_N(h))^{-2}\cdot \Delta_N(\sigma_N(h)\cdot h) \cdot \prod\limits_{j=1}^k (\lbrack \ell_j \rbrack - \sigma_N(\lbrack \ell_j \rbrack)).\]
		\end{itemize}
	\end{enumerate}
	\item $b_1(M) = 1$. Let $h$ denote the generator of $H_N \cong H_M$.
	\begin{enumerate}
		\item If $\partial N \neq \emptyset$  then
		\[i_\ast(\Theta_\mathcal{V})(h) = \Delta_N(\sigma_N(h)\cdot h).\]
		\item If $N$ is closed set $\ell = \ell_1$, and then
		\[
		i_\ast(\Theta_\mathcal{V})(h) = (h-\sigma_N(h))^{-1}(\lbrack \ell \rbrack - \sigma_N(\lbrack \ell \rbrack)) \cdot \Delta_N(\sigma_N(h)\cdot h).\]
		%		\[\pushQED{\qed}
		%		i_\ast(\Delta_M) = (h-1)^{-1}(\lbrack \ell \rbrack - 1) \cdot \Delta_N.\qedhere
		%		\popQED\]
	\end{enumerate}
\end{enumerate}

In particular  with the above assumptions we have
\[i_\ast(\Theta_{\mathcal{V}})(h_1, \ldots, h_s) = \Delta_N(\pm h_1,  \ldots, \pm h_s)\]
if and only if one of the following four conditions holds
\begin{itemize}
	\item $N=M$, or
	\item  $b_1(N)=1$, $\partial N \neq \emptyset$, $k=1$, and $\lbrack \ell_1 \rbrack$ generates $H_N$, or
	\item $b_1(N)=1$, $N$ is closed, $k=2$ and $\lbrack \ell_1 \rbrack = \lbrack \ell_2 \rbrack$ generates $H_N$, or
	\item $b_1(M)=1$, $N$ is closed and $\lbrack \ell_1 \rbrack$ generates $H_N$.
\end{itemize}
\end{thm:main}

The motivation for studying the specialisation of the taut polynomial under a Dehn filling comes from its connection with the \emph{Teichm\"uller polynomial}. This invariant is associated to a fibred face of the Thurston norm ball and was introduced by McMullen in \cite[Section~3]{McMullen_Teich}.  It plays an important role in understanding stretch factors of pseudo-Anosov homeomorphisms of surfaces. By a result of Landry, Minsky and Taylor \cite[Proposition~7.2]{LMT}, every Teichm\"uller polynomial can be expressed as a specialisation of the taut polynomial of a \emph{layered} veering triangulation. In particular, Theorem~\ref{thm:taut=twisted} implies that Teichm\"uller polynomials are just specialisations of twisted Alexander polynomials and thus can be computed using Fox calculus.
\begin{cor:Teich:twisted} Let $N$ be a connected, oriented,  hyperbolic 3-manifold. Let  $\face{F}$ be a fibred face of the Thurston norm ball in $H^1(N;\rr)$. %Denote by $M$ the manifold obtained from $N$ by drilling out the singular orbits of the suspension flow determined by $\face{F}$. 
	Then the Teichm\"uller polynomial~of $\face{F}$ is a specialisation of a twisted Alexander polynomial of the manifold obtained from~$N$ by drilling out the singular orbits of the suspension flow determined by $\face{F}$. 
\end{cor:Teich:twisted}
 Furthermore, by \cite[Proposition~7.2]{LMT} Theorem \ref{thm:main} can  be interpreted as a theorem which relates  the Teichm\"uller polynomial~$\Theta_{\texttt{F}}$ of a fibred face $\face{F}$ in $H^1(N;\rr)$ with the Alexander polynomial $\Delta_N$ of $N$. 
In \cite[Theorem~7.1]{McMullen_Teich} McMullen proved
\begin{theoremM}
	Let $N$ be a connected, oriented, hyperbolic 3-manifold which is fibred over the circle. Assume $b_1(N) \geq 2$. Let $\face{F}$ be a fibred face of the Thurston norm ball in $H^1(N;\rr)$. Then $\face{F} \subset \face{A}$ for a unique face $\face{A}$ of the Alexander norm ball in $H^1(N;\rr)$. If moreover the stable lamination $\mathcal{L}$ of the suspension flow determined by~$\face{F}$ is transversely orientable, then $\face{F} = \face{A}$ and $\Delta_N$ divides~$\Theta_{\mathtt{F}}$. 
\end{theoremM}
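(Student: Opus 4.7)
The plan is to treat the face-containment and the divisibility separately, both routed through the action of the pseudo-Anosov monodromy on the homology of a fibre and on a transition matrix for its invariant train track.

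First I would fix a primitive integral class $\phi_0$ in the interior of $\face{F}$, with the associated fibration $N\to S^1$, fibre $F$, and pseudo-Anosov monodromy $\psi:F\to F$. Since $\phi_0$ is fibred, the infinite cyclic cover of $N$ determined by $\phi_0$ is homotopy equivalent to $F$ with $t$-action $\psi_\ast$, so (up to a unit and a $(t-1)$-factor) the one-variable specialisation $\phi_0(\Delta_N)$ coincides with $\det(t\Id-\psi_\ast|H_1(F;\rr))$. Combining this with the Alexander-vs-Thurston norm inequality $\Anorm{\cdot}\leq\Thnorm{\cdot}$, which drops out of the Newton-polytope description of $\Anorm{\cdot}$ together with a standard estimate on the Alexander module, and the classical equality $\Thnorm{\phi_0}=|\chi(F)|$ for fibred classes, one concludes that $\phi_0$ maximises on a face of the Newton polytope of $\Delta_N$; the hypothesis $b_1(N)\geq 2$ is what ensures a full face (not just a vertex) at this stage. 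Extending by continuity across $\face{F}$ produces a unique face $\face{A}$ of the Alexander norm ball with $\face{F}\subset\face{A}$.

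Next, for the transversely orientable case I would build the invariant train track $\tau\subset F$ carrying the stable lamination $\mathcal{L}$, with transition matrix $P_\psi$ acting on $\zz^{E(\tau)}$. By McMullen's definition, $\Theta_{\face{F}}$ is a characteristic polynomial for the lift of $P_\psi$ to the maximal free abelian cover. Collapsing a tie-neighbourhood of $\tau$ onto $F$ sends each branch to an oriented $1$-cycle; transverse orientability of $\mathcal{L}$ supplies a coherent sign on every branch, so the collapse descends to a surjective $\zz[\psi_\ast]$-equivariant homomorphism $\zz^{E(\tau)}\twoheadrightarrow H_1(F;\zz)$. Equivariance forces $\det(t\Id-\psi_\ast|H_1(F;\rr))$ to divide $\det(t\Id-P_\psi)$, and promoting this to the multivariable setting gives $\Delta_N\mid\Theta_{\face{F}}$. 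The equality $\face{F}=\face{A}$ then follows because divisibility equates the support functions of the two Newton polytopes on the cone over $\face{F}$.

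The main obstacle will be the compatibility between the branch module $\zz^{E(\tau)}$ and $H_1(F;\zz)$: the transition matrix is generally only defined up to a sign on each branch, and only the transverse orientability assumption turns these signs into a coherent chain map. Without it one obtains at best a twisted Alexander polynomial --- precisely the phenomenon that Proposition \ref{prop:taut=twisted} and Proposition \ref{prop:twisted_factors} of this paper are devoted to. A secondary, more technical point is passing from single-variable divisibility at a generic $\phi_0$ to multivariable divisibility in the group ring, which is handled via primitivity in Laurent polynomial rings.
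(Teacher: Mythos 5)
This statement is McMullen's Theorem~7.1 from \cite{McMullen_Teich}, which the paper merely cites and does not prove, so there is no internal argument to compare against literally. To the extent the paper re-derives (and strengthens) it --- through Lemma~\ref{lem:computing_Teich}, Theorem~\ref{thm:main}, Corollary~\ref{cor:Teich_in_one} and Corollary~\ref{cor:faces:equal} --- the route is entirely different from yours: the fibre-and-train-track picture is replaced by the veering triangulation of $N\setminus\mathrm{sing}(\face{F})$, the taut polynomial is identified with a twisted Alexander polynomial (Proposition~\ref{prop:taut=twisted}), and Turaev's Dehn-filling formulas (Lemma~\ref{lem:i_Alex_under_filling}) convert this into explicit formulas for $\Theta_{\mathtt{F}}$ in terms of $\Delta_N$. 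What this buys is an exact relation (not merely divisibility), coverage of the nonfibred case, and the weaker hypothesis of transverse orientability only in $N^{fab}$. Your approach is essentially a reconstruction of McMullen's original argument, working directly with the monodromy action on a train track in the fibre; that is a legitimate and more self-contained route, but it delivers only the qualitative conclusion.

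The concrete gap is the asserted surjection $\zz^{E(\tau)}\twoheadrightarrow H_1(F;\zz)$. A branch of $\tau$ is an arc whose endpoints are switches, so ``collapsing a tie-neighbourhood'' does not send a branch to a $1$-cycle, and there is no natural chain map from the free module on branches to first homology. What a coherent orientation of $\tau$ actually produces is a well-defined boundary map $\partial\colon\zz^{E(\tau)}\to\zz^{V(\tau)}$; the group that identifies with $H_1(F;\zz)$ is $\ker\partial\cong H_1(\tau;\zz)$, not $\zz^{E(\tau)}$, and the module whose characteristic polynomial computes $\Theta_{\mathtt{F}}$ is McMullen's module of transversals, presented by the switch relations. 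Your divisibility step would work once the correct $\zz[\psi_\ast]$-equivariant surjection is written down, but you must be precise about its domain: as stated, the map does not exist. You have correctly located the role of orientability as the device that removes the branch-by-branch sign ambiguity; that is exactly the phenomenon that Propositions~\ref{prop:taut=twisted} and~\ref{prop:twisted_factors} of this paper make precise in the veering setting, without needing orientability of $\mathcal{L}$ in $N$ itself.
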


Theorem \ref{thm:main} extends McMullen's result in two ways.
\begin{enumerate}[labelindent=0pt,label=(\arabic*),itemindent=1.9em,leftmargin=0cm]
	\item When a veering triangulation is layered and Dehn filling slopes are parallel to the boundary components of a carried fibre, it gives a stronger relation between the Teichm\"uller polynomial and the Alexander polynomial. The exact formulas relating these two invariants are given in Corollary \ref{cor:Teich_in_one}. 
	
	For a fibred face $\face{F}$ whose associated lamination~$\mathcal{L}$ is transversely orientable we give sufficient conditions for equality between~$\Theta_{\texttt{F}}$ and~$\Delta_N$. When $\Delta_N$ only divides $\Theta_{\texttt{F}}$ we give the formula for the remaining factors of $\Theta_{\texttt{F}}$. We also give precise formulas relating $\Theta_{\texttt{F}}$ and $\Delta_N$ in the case when $\mathcal{L}$ is not transversely orientable; we only need to assume that its preimage in the maximal free abelian cover $N^{fab}$ of $N$ is transversely orientable. %Note that transverse orientability of the induced lamination in $N^{fab}$ is a weaker condition than transverse orientability of $\mathcal{L}$ itself. 
	In Corollary \ref{cor:faces:equal} we show that transverse orientability of this lamination in $N^{fab}$ is a sufficient condition for equality between the fibred face and a face of the Alexander norm ball.
	\item Theorem \ref{thm:main} does not require a veering triangulation to be layered. When~$\mathcal{V}$ is not layered, but does carry a surface, it determines a (not necessarily top-dimensional) nonfibred face of the Thurston norm ball \cite[Theorem 5.12]{LMT}. In this case we obtain a relation between the Alexander polynomial of a 3-manifold and a polynomial invariant of a nonfibred face of its Thurston norm ball. 
\end{enumerate} 

In Subsection \ref{subsec:orientability} we study \emph{orientable fibred classes}, that is cohomology classes determining fibrations with orientable invariant laminations in the fibre. We prove a sufficient and necessary condition for the existence of such classes in the cone over a fibred face of the Thurston norm ball.
\begin{thm:characterisation}
	Let $N$ be a compact, oriented, hyperbolic 3-manifold with a fibred face $\face{F}$ of the Thurston norm ball in $H^1(N;\rr)$. Let $\mathcal{L}$ be  the stable lamination of the suspension flow associated to $\face{F}$. If the lamination induced by $\mathcal{L}$ in the maximal free abelian cover of $N$ is transversely orientable then there is an orientable fibred class in the interior of $\rr_+ \hspace{-0.1cm}\cdot \face{F}$.
\end{thm:characterisation}
In other words, the assumptions of Corollary \ref{cor:Teich_in_one} are equivalent to the existence of orientable fibred classes in the cone over a fibred face $\face{F}$. Since such classes determine fibrations for which the stretch factor and the  homological stretch factor of the monodromy are equal, they are the reason behind the close relationship between the Teichm\"uller polynomial and the Alexander polynomial obtained in Corollary \ref{cor:Teich_in_one}. 

The ``if'' part of Theorem \ref{thm:characterisation} is constructive and appears as Corollary \ref{cor:orientable:classes}.

\begin{prop:LN:orientable}
	Let~$\face{F}$ be a fibred face of the Thurston norm ball in $H^1(N;\rr)$. Let  $(h_1, \ldots, h_s)$ be a basis of $H_N$ and let $(h_1^\ast, \ldots, h_s^\ast)$ be the dual basis of $H^1(N;\zz)$. Denote by $\mathcal{L}$ the stable lamination of the suspension flow associated to $\face{F}$.
	
	If $\mathcal{L}$ is not transversely orientable, but the induced lamination $\mathcal{L}^N \subset N^{fab}$ is transversely orientable,
	then a primitive class
	\[\xi=a_1h_1^\ast + \ldots + a_r h_r^\ast \in \inter(\rr_+\hspace{-0.1cm}\cdot \face{F}) \cap H^1(N;\zz)\]
	is orientable if and only if for every $j=1, \ldots, s$
	\begin{equation}\label{eqn:in:intro}
		%\pushQED{\qed} 
		a_j = \begin{cases}
			\text{odd} &\text{if }\sigma_N(h_j) = -1\\
			\text{even} &\text{if }\sigma_N(h_j) = 1.
		\end{cases}%\qedhere
		%\popQED
	\end{equation} In particular, with the above assumptions on $\mathcal{L}$ there are orientable fibred classes in $\rr_+ \hspace{-0.1cm}\cdot \face{F}$. \qed
\end{prop:LN:orientable}

\subsection*{Acknowledgements}
I am grateful to Samuel Taylor for explaining to me his work on the taut and veering polynomials during my visit at Temple University in July 2019, and subsequent conversations. I thank Saul Schleimer and Henry Segerman for their generous help in implementing the computation of the taut polynomial. I thank Dawid Kielak and Bin Sun for discussions on twisted Alexander polynomials, and Joe Scull for discussions on 3-manifolds.

The majority of this work was written during PhD studies of the author at the University of Warwick, funded by the EPSRC and supervised by Saul Schleimer. However, the proof that the taut polynomial is always a twisted Alexander polynomial was added when the author was a Postdoctoral Research Associate at the University of Oxford, funded by the Simons Investigator Award 409745 of Vladimir Marković.
\section{Transverse taut veering triangulations}\label{sec:veering}
	\subsection*{Ideal triangulations of 3-manifolds}\label{subsec:triangulations}
An \emph{ideal triangulation} of a 3-manifold~$M$ with boundary is a decomposition of the interior of $M$ into ideal tetrahedra. We denote an ideal triangulation by $\mathcal{T} = (T,F, E)$, where $T, F, E$ denote the set of tetrahedra, triangles (2-dimensional faces) and edges, respectively. 
\subsection*{Transverse taut triangulations}
Let $t$ be an ideal tetrahedron. Assume that a coorientation is assigned to each of its faces. We say that $t$ is \emph{transverse taut} if on two of its faces the coorientation points into $t$ and on the other two it points out of $t$ \cite[Definition 1.2]{veer_strict-angles}. We call the pair of faces whose coorientations point out of $t$ the \emph{top faces} of~$t$ and the pair of faces whose coorientations point into $t$ the \emph{bottom faces} of~$t$. We also say that $t$ is \emph{immediately below} its top faces and \emph{immediately above} its bottom faces. We encode a transverse taut structure on a tetrahedron by drawing it as a quadrilateral with two diagonals --- one on top of the other; see Figure~\ref{fig:veering_tetra}. The convention is that the coorientation on all faces points towards the reader. 

The \emph{top diagonal} is the common edge of the two top faces of $t$ and the \emph{bottom diagonal} is the common edge of the two bottom faces of $t$. The remaining four edges of $t$ are called its \emph{equatorial edges}. Presenting a transverse tetrahedron as in Figure~\ref{fig:veering_tetra} naturally endows it with an abstract assignment of angles from $\lbrace 0, \pi\rbrace$ to its edges. The diagonal edges are assigned the $\pi$ angle, and the equatorial edges are assigned the $0$ angle. Such an assignment of angles is called a \emph{taut angle structure} on $t$ \cite[Definition~1.1]{veer_strict-angles}.

A triangulation $\mathcal{T}=(T,F, E)$ is \emph{transverse taut} if 
\begin{itemize}
	\item every ideal triangle $f \in F$ is assigned a coorientation so that every ideal tetrahedron is transverse taut, and 
	\item for every edge $e \in E$ the sum of angles of the underlying taut angle structure of $\mathcal{T}$, over all embeddings of $e$ into tetrahedra, equals $2\pi$ \cite[Definition 1.2]{veer_strict-angles}. 
	\end{itemize}
We denote a triangulation with a transverse taut structure by $(\mathcal{T}, \alpha)$.	
\begin{remark*} Transverse taut triangulations  were introduced by Lackenby in~\cite{Lack_taut} under the name of \emph{taut triangulations}.  \end{remark*}
\subsection*{Veering triangulations} 	A transverse taut tetrahedron $t$ is \emph{veering} if its edges are coloured either red or blue so that the pattern of colouring on the equatorial edges of~$t$ is precisely as in Figure~\ref{fig:veering_tetra}. There is no restriction on how the diagonal edges of $t$ are coloured. This is indicated by colouring them black.
 A transverse taut triangulation  is \emph{veering} if a colour (red/blue) is assigned to each edge of the triangulation so that every tetrahedron is veering. We denote a  veering triangulation by $\mathcal{V}= (\mathcal{T}, \alpha, \nu)$, where $\nu$ corresponds to the colouring on edges.

\begin{figure}[h]
	\begin{center}
		\includegraphics[width=0.21\textwidth]{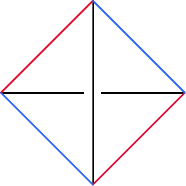}
		\put(-75,67){R}
		\put(-20,67){B}
		\put(-20,12){R}
		\put(-75,12){B}
	\end{center}
	\caption{A veering tetrahedron. Red edges are labelled with `R' and blue edges are labelled with `B'. Unlabelled edges can be of either colour. The taut angle structure assigns 0 to the equatorial edges and $\pi$ to the diagonal edges of the tetrahedron.}
	\label{fig:veering_tetra} 
\end{figure}

%defer to section 3
%Any triangle of a veering triangulation has two edges of the same colour and one edge of the other colour. 
%	We say that a triangle of a veering triangulation is \emph{red} (respectively \emph{blue}) if two of its edges are red (respectively blue). The distinction between red and blue triangles is important in the proof of Lemma \ref{lemma:edge_orientable}.
	
	\begin{remark}[\textbf{Reading pictures of veering triangulations}] \label{remark:pictures}
		Figures \ref{fig:m003} and \ref{fig:m003_cover} show all tetrahedra of some veering triangulations. In each of these figures, each column represents one tetrahedron of the triangulation. The top row of each column presents the top faces of the tetrahedron, and the bottom row presents the  bottom faces of the tetrahedron. Each triangle has a label of the form $f_i$ (or $\overline{f_i}$ in the case of Figure~\ref{fig:m003_cover}). Since a veering triangulation is in particular transverse taut, each face of the  triangulation is a top face of some tetrahedron, and  a bottom face of some tetrahedron. This means that  the label $f_i$ appears exactly once in the top row  and exactly once in the bottom row. In the triangulation, the two copies of $f_i$ are identified by a homeomorphism which sends (ideal) edges to (ideal) edges.  It therefore suffices to know the correspondence between the edges of the top copy of $f_i$ and edges of the bottom copy of $f_i$. This can be read off from the train track that we drew in the triangles of the triangulation (this is the upper track of the triangulation, as defined in Definition \ref{defn:tracks}). Namely, under the identification the train track in the bottom copy of $f_i$ must agree with the train track in the top copy of $f_i$.
		
		To avoid cluttering Figures \ref{fig:m003} and \ref{fig:m003_cover} with too many labels, we do not indicate the colours of edges by labels (we continue to use colours though). In case of black-and-white printing the colours of edges can be deduced using the following rules:
		\begin{itemize}
			\item if an edge has positive slope then it is red,
			\item if an edge has negative slope then it is blue,
			\item if an edge is either horizontal or vertical then its colour can be deduced using identifications between the faces of tetrahedra of the triangulation.
		\end{itemize}
	\end{remark}

%In this paper we consider only \emph{transverse taut} veering triangulations, where we can distinguish top faces from bottom faces. We skip  ``transverse taut'' in the rest of the paper. 
The importance of veering triangulations follows from their connection with pseudo-Anosov flows on 3-manifolds. Agol-Gu\'eritaud showed that given a pseudo-Anosov flow without perfect fits on a closed 3-manifold one can always construct a veering triangulation on the complement of the singular orbits of the flow. This result is unpublished but its outline can be found in \cite[Section 4]{LMT_flow}. The constructed veering triangulation is canonical and combinatorially encodes the stable and unstable laminations of the flow; see Lemma \ref{lemma:EO:to}. Recently Agol-Tsang proved that the converse is also true. That is, given a veering triangulation of $M$ it is possible to construct a pseudo-Anosov flow on certain Dehn fillings of $M$ \cite[Theorem 5.1]{Tsang-Agol}. Thus veering triangulations are combinatorial tools that can be used to study pseudo-Anosov flows. In particular, the main object of interest in this paper, the taut polynomial, carries information about the exponential growth rate of closed orbits of the associated flow; see \cite{LMT_flow, LMT}.

\section{The lower and upper tracks}\label{sec:train:tracks}
Let $(\mathcal{T}, \alpha)$ be a transverse taut triangulation of $M$.  The taut structure on~$\mathcal{T}$ allow us to view its 2-skeleton $\mathcal{T}^{(2)}$ as a \emph{branched surface} embedded in $M$. We call it the \emph{horizontal branched surface} of $(\mathcal{T}, \alpha)$ and denote it by $\mathcal{B}$ \cite[ Subsection 2.12]{SchleimSegLinks}. 

In this subsection we describe a pair of canonical train tracks embedded in $\mathcal{B}$, called the \emph{lower} and \emph{upper tracks} of a $(\mathcal{T}, \alpha)$. They are key in the definition of the taut polynomial. Moreover, we use them to divide veering triangulations into two classes: \emph{edge-orientable} ones  and \emph{not edge-orientable} ones.
\subsection{Dual train tracks}
 A \emph{dual train track} $\tau \subset \mathcal{B}$ is a train track in $\mathcal{B}$ which restricted to any face $f \in F$ looks like in Figure~\ref{fig:triangle_train_track}. We denote the restriction of $\tau$ to $f$ by $\tau_f$. The edges of $\tau_f$ are called the \emph{half-branches} of $\tau_f$. Two of the half-branches of $\tau_f$ are called its \emph{small} half-branches. The remaining half-branch is called \emph{large}.

The train track $\tau$ has two types of special points: \emph{switches}  (one for each $f \in F$; the common point of all half-branches of $\tau_f$) and \emph{midpoints of edges} (one for each $e \in E$). We say that a half-branch $b$ \emph{meets} $e \in E$, or vice versa, if one of the endpoints of $b$ is the midpoint of $e$.

\begin{figure}[h]
	\begin{center}
		\includegraphics[width=0.13\textwidth]{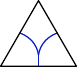} 
		\put(-37,-12){large}
		\put(-70,22){small}
		\put(-8,22){small}
	\end{center}
	\caption{Dual train track restricted to a face.}
	\label{fig:triangle_train_track}
\end{figure}

\begin{definition}
	We say that a dual train track $\tau \subset \mathcal{B}$ is \emph{transversely orientable} if there exists a nonvanishing continuous vector field on $\tau$ which is tangent to $\mathcal{B}$ and transverse to $\tau$. 
\end{definition}
\begin{figure}[h]
	\begin{center}
		\includegraphics[width=0.35\textwidth]{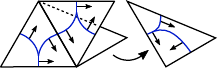}
	\end{center}
	\caption{A local picture of a transversely oriented dual train track in the horizontal branched surface. For clarity the train track in the bottom triangle is presented separately on the right.}
	\label{fig:transversely_oriented_track}
\end{figure}
Figure \ref{fig:transversely_oriented_track} presents a local picture of a transversely oriented dual train track. Transverse orientation is indicated  by arrows transverse to the branches of the track pointing in one of the two possible directions.
\begin{example}\label{example:m003_upper_track}
The SnapPy manifold m003 \cite{snappea}, also known as the figure-eight knot sister, admits a veering triangulation described in the Veering Census \cite{VeeringCensus} by the string \texttt{cPcbbbdxm\_10}. Figure \ref{fig:m003} presents an example of a dual train track $\tau$ in the horizontal branched surface of \texttt{cPcbbbdxm\_10}. 
\begin{figure}[h]
	\begin{center}
		\includegraphics[width=0.35\textwidth]{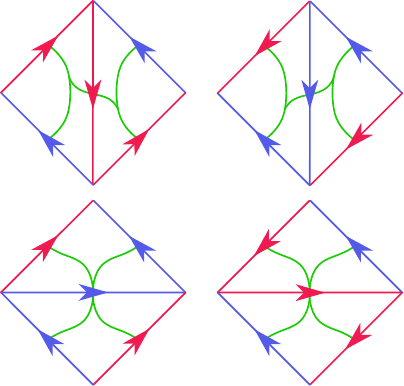} 
		\put(-135,102){$f_1$}
		\put(-95,102){$f_3$}
		\put(-58,102){$f_2$}
		\put(-18,102){$f_0$}
		\put(-115,52){$f_0$}
		\put(-38,52){$f_3$}
		\put(-115,10){$f_2$}
		\put(-38,10){$f_1$}
	\end{center}
	\caption{The upper track of  the veering triangulation \texttt{cPcbbbdxm\_10} of the figure-eight knot sister. See Remark \ref{remark:pictures} for an explanation of how to read this picture.   Observe that the yellow shaded region is homeomorphic to the M\"obius band.}
	\label{fig:m003}
\end{figure}

The union of two half-branches of $\tau$ in the face $f_0$  form the core curve of the M\"obius band (shaded in yellow). Hence the presented train track cannot be transversely oriented.
\end{example}
\subsection{The lower and upper tracks}
A transverse taut structure $\alpha$  on $\mathcal{T}$ endows its horizontal branched surface $\mathcal{B}$ with a pair of canonical train tracks which we call, following \cite[Definition 4.7]{SchleimSegLinks}, the \emph{lower} and \emph{upper}  tracks of $(\mathcal{T}, \alpha)$. 

\begin{definition}\label{defn:tracks}Let $(\mathcal{T}, \alpha)$ be a transverse taut triangulation and~ let $\mathcal{B}$ be the corresponding horizontal branched surface. 	
	The \emph{lower track} $\tau^L$ of $\mathcal{T}$ is the dual train track in $\mathcal{B}$ such that for every $f \in F$ the large-half branch of $\tau^L_f$ meets the top diagonal of the tetrahedron immediately below~$f$.	
	The \emph{upper track} $\tau^U$ of $\mathcal{T}$ is the dual train track in $\mathcal{B}$ such that for every $f \in F$ the large-half branch of $\tau^U_f$ meets the bottom diagonal of the tetrahedron immediately above~$f$.
\end{definition}
While this definition may seem artificial at first, in the veering case the lower and upper tracks combinatorially encode the unstable and stable laminations of the associated pseudo-Anosov flow; see Subsection \ref{subsec:EO:to}.
\begin{example}The upper track of the veering triangulation \texttt{cPcbbbdxm\_10} of the figure-eight knot sister is presented in Figure \ref{fig:m003}.
	\end{example}

We introduce the following names for the edges of $f \in F$ which meet large half-branches of $\tau_f^L$ or $\tau_f^U$.
\begin{definition}
	Let $(\mathcal{T}, \alpha)$ be a transverse taut triangulation. We say that an edge in the boundary of $f \in F$ is the \emph{lower large} (respectively the \emph{upper large}) edge of $f$ if it meets the large half-branch of $\tau_{f}^L$ (respectively $\tau_{f}^U$).
\end{definition}
To define the lower and upper tracks we do not need a veering structure on a transverse taut triangulation. However, if the triangulation is veering then the lower and upper tracks restricted to the faces of a single tetrahedron $t$ are determined by the colours of the diagonal edges of $t$; see the lemma below. 
\begin{lemma}[Lemma 3.9 of \cite{taut_veer_teich}]\label{lemma:large_edges}
	Let $\mathcal{V}$ be a veering triangulation. Let $t$ be one of its tetrahedra. The lower large edges of the bottom faces of $t$ are the equatorial edges of~$t$ which are of the same colour as the bottom diagonal of~$t$. The upper large edges of the top faces of $t$ are the equatorial edges of $t$ which are of the same colour as the top diagonal of $t$. \qed
\end{lemma}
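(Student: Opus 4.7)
The two assertions are symmetric under reversing the transverse coorientation (which swaps top with bottom and lower with upper), so I focus on the lower-track statement. Let $f$ be a bottom face of $t$ and let $t'$ be the tetrahedron immediately below $f$, so that $f$ is a top face of $t'$. By the definition of the lower track, the lower large edge of $f$ is precisely the top diagonal of $t'$; the task is therefore to identify this edge among the three edges of $f$.

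The first and easy step is to pin down its colour using the majority-colour property of triangles in a veering triangulation. Viewed from $t$, the face $f$ consists of the bottom diagonal $e_{\mathrm{bd}}$ of $t$ (of some colour $c$) together with two equatorial edges of $t$, which are adjacent in the equatorial cycle of $t$ and therefore have opposite colours. So exactly one of them has colour $c$, and the majority colour of $f$ is $c$. Viewed from $t'$, the same count, this time with the top diagonal of $t'$ in place of $e_{\mathrm{bd}}$, shows that the majority colour of $f$ equals the colour of the top diagonal of $t'$. Hence the top diagonal of $t'$ has colour $c$, and is therefore one of exactly two $c$-coloured edges of $f$: either $e_{\mathrm{bd}}$ itself, or the unique $c$-coloured equatorial edge of $t$ inside $f$.

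The main step, and the main obstacle, is to exclude the first option. Suppose for contradiction that the top diagonal of $t'$ coincides with $e_{\mathrm{bd}}$. Then $e_{\mathrm{bd}}$ is a diagonal in both $t$ (above) and $t'$ (below), contributing dihedral angles $\pi+\pi=2\pi$ already; by the taut condition, every other tetrahedron around $e_{\mathrm{bd}}$ has $e_{\mathrm{bd}}$ as an equatorial edge. Moreover, since $t$ and $t'$ share the face $f$ containing $e_{\mathrm{bd}}$, they are cyclically adjacent in the link of $e_{\mathrm{bd}}$ across $f$. I would derive a contradiction from the local structure of a veering triangulation around a coloured edge: the two diagonal tetrahedra at a coloured edge are never cyclically adjacent across a face containing that edge, because the alternation of equatorial colours in each tetrahedron is incompatible with placing the two diagonal tetrahedra back-to-back at a common face containing the edge. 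This is the only point in the argument that genuinely uses the veering condition rather than merely the transverse taut condition, and it is where the real work lives; the majority-colour argument of the previous paragraph would equally allow the spurious configuration.

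Once $e_{\mathrm{bd}}$ is excluded, the top diagonal of $t'$ must be the $c$-coloured equatorial edge of $t$ lying in $f$. Applying the same argument to the other bottom face of $t$ identifies the second $c$-coloured equatorial edge of $t$ as the other lower large edge, and together they exhaust the equatorial edges of $t$ whose colour matches the bottom diagonal. The upper-track statement follows verbatim by reversing coorientations and interchanging the roles of top and bottom.
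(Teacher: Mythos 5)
Your majority-colour argument and the reduction to two candidates are correct, but the exclusion step --- ruling out that the top diagonal of $t'$ coincides with $e_{\mathrm{bd}}$ --- is a genuine gap, and the reason you sketch does not supply it. You attribute the would-be contradiction to ``the alternation of equatorial colours,'' but alternation alone is satisfied in both configurations and cannot distinguish them. If $e_{\mathrm{bd}}$ were the bottom diagonal of $t$ and simultaneously the top diagonal of $t'$, then $t$ and $t'$ are glued along both faces containing $e_{\mathrm{bd}}$ and share all four vertices and all six edges; the equatorial cycle of $t'$ then consists of the very same four edges, in the same cyclic order, as the equatorial cycle of $t$, so the alternating colouring of $t$ literally restricts to an alternating colouring of $t'$. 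No contradiction can be extracted from the alternation property, so step~3 is not a compressed argument but a missing one.

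What rules out the back-to-back configuration is the full, oriented content of Figure~\ref{fig:veering_tetra}: the veering condition fixes not merely that the equatorial colours alternate but a definite chirality of red and blue relative to the diagonals and to the orientation that the coorientation induces on each face. Concretely, orient $f$ by its coorientation and write its vertices as $(P,Q,R)$ in that orientation with $Q$ the vertex of $f$ opposite the diagonal edge; the ordered pair $(\mathrm{colour}(PQ),\,\mathrm{colour}(QR))$ is one fixed pattern when $f$ is read as a top face of a veering tetrahedron and the reversed pattern when it is read as a bottom face. Exactly two of the three vertices of $f$ can serve as $Q$ (the third makes $PQ$ and $QR$ the same colour), and these two choices give reversed ordered pairs; since $f$ is a bottom face of $t$ and a top face of $t'$, the two readings force different choices of $Q$, hence different diagonal edges, and so the top diagonal of $t'$ cannot be $e_{\mathrm{bd}}$. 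Note also that the fan-structure fact you invoke (the two diagonal tetrahedra at an edge never abut across a face containing that edge) is equivalent to the statement that the lower and upper large edges of a face are distinct, which is Lemma 2.1 of \cite{taut_veer_teich} and logically upstream of the present lemma; so appealing to it without proof leaves the same gap in another place.
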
 
Using Lemma \ref{lemma:large_edges} we present the pictures of the lower track and upper track in a veering tetrahedron  in Figure~\ref{fig:lower_tt}(a) and Figure~\ref{fig:lower_tt}(b),  respectively. 
 \begin{figure}[h]
	\begin{center}
		\includegraphics[width=0.35\textwidth]{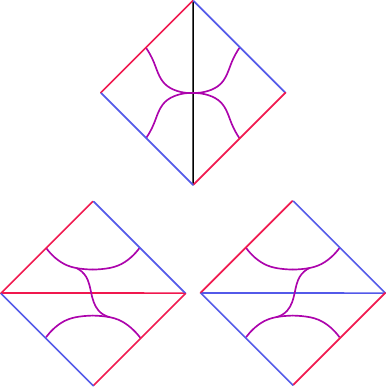} 
		\includegraphics[width=0.35\textwidth]{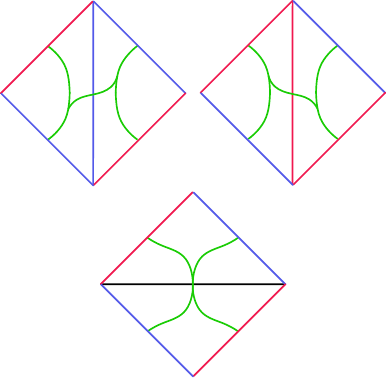} 
		\put(-300,-10){(a)}
		\put(-140,-10){(b)}
		%top purple equator
		\put(-248,128){R}
			\put(-199,128){B}
				\put(-199,82){R}
					\put(-246,82){B}
					%top green equator
		\put(-137,125){R}
		\put(-90,125){B}
		\put(-63,125){R}
		\put(-15,125){B}
		\put(-135,79){B}
		\put(-89,79){R}
		\put(-61,79){B}
		\put(-15,79){R}
		%top green diagonals
		\put(-118,118){B}
			\put(-33,118){R}
		%bottom purple equator
		\put(-285,53){R}
		\put(-236,53){B}
		\put(-283,8){B}
		\put(-236,8){R}
		\put(-210,53){R}
		\put(-162,53){B}
		\put(-208,8){B}
		\put(-162,8){R}
		%bottom purple diagonals
		\put(-243,37){R}
		\put(-203,37){B}
		%bottom green equator
		\put(-100,53){R}
		\put(-53,53){B}
		\put(-98,8){B}
		\put(-52,8){R}

	\end{center}
	\caption{Squares in the top row represent top faces of a tetrahedron and squares in the bottom row represent bottom faces of a tetrahedron. (a) The lower track in a veering tetrahedron. 
	There are two options, depending on the colour of the bottom diagonal. 
		(b) The upper track in a veering tetrahedron. 
	There are two options, depending on the colour of the top diagonal.}
	\label{fig:lower_tt}
\end{figure}
\subsection{Edge-orientable veering triangulations}\label{subsec:EO}
We divide all veering triangulations into two classes. The division depends on whether the lower/upper track is transversely orientable. First we prove that it does not matter which track we consider. 
%Note that any triangle of a veering triangulation has two edges of the same colour and one edge of the other colour. 	We say that a triangle of a veering triangulation is \emph{red} (respectively \emph{blue}) if two of its edges are red (respectively blue). 
\begin{lemma}\label{lemma:edge_orientable}
	Let $\mathcal{V}$ be a veering triangulation. The lower track of $\mathcal{V}$ is transversely orientable if and only if the upper track of $\mathcal{V}$ is transversely orientable.
\end{lemma}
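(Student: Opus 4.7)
My plan is to reduce the transverse orientability of each of $\tau^L$ and $\tau^U$ to a common combinatorial condition on the veering triangulation $\mathcal{V}$, and thereby deduce the equivalence.

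First I would reinterpret a transverse orientation of a dual train track $\tau \subset \mathcal{B}$ as a sign $\varepsilon_e \in \lbrace \pm 1 \rbrace$ at each edge $e \in E$ satisfying one $\mathbb{Z}/2$-linear relation per face. The sign $\varepsilon_e$ records the direction of the transverse orientation where $\tau$ crosses the midpoint of $e$; this is well defined because the half-branches of $\tau$ meeting the midpoint of $e$ pair up smoothly inside the branched surface $\mathcal{B}$ in a manner dictated by the taut structure. The relation at a face $f$ comes from the tripod switch of $\tau_f$: since the large half-branch sits alone on one tangential side of the switch while the two small half-branches sit together on the opposite side, a transverse orientation propagates uniquely through the switch and imposes a single linear relation on the signs of the large edge and the two small edges of $f$.

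Applying this construction separately to $\tau^L$ and to $\tau^U$ yields two $\mathbb{Z}/2$-linear systems $S^L$ and $S^U$ on the same variables $\lbrace \varepsilon_e \rbrace_{e \in E}$. Both are indexed by $F$, but the relation at a given face $f$ depends on which edge of $f$ is large, and this choice can differ between $\tau^L_f$ and $\tau^U_f$. To finish, I would use Lemma \ref{lemma:large_edges} and the colour pattern of Figure \ref{fig:veering_tetra} to compare $S^L$ and $S^U$ inside each tetrahedron $t$. The two relations from $S^L$ contributed by $t$ come from its bottom faces, with large edges equal to the equatorial edges of $t$ of the same colour as the bottom diagonal; the two relations from $S^U$ come from the top faces with large edges determined analogously by the top diagonal. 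Going through the four cases for the pair of diagonal colours of $t$, I would verify that in each case the pair of $S^L$-relations coming from $t$ and the pair of $S^U$-relations coming from $t$ generate the same subspace of $(\mathbb{Z}/2)^{E(t)}$. Summing these local equivalences over tetrahedra shows that $S^L$ and $S^U$ have the same solution space, and in particular one admits a nonzero solution if and only if the other does.

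The main obstacle is the first step: identifying the correct local compatibility condition at a tripod switch in terms of the edge signs, which requires a careful model of how the half-branches of $\tau$ meet at the midpoint of an edge inside the branched surface $\mathcal{B}$. Once this is pinned down, the veering case analysis is finite and determined directly by Figure \ref{fig:veering_tetra}.
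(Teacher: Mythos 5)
The proposed reduction to a $\mathbb{Z}/2$-linear system is a reasonable idea, but the setup and the proposed comparison have genuine gaps. First, the constraint imposed at a face $f$ is not one homogeneous relation: a transverse orientation on $\tau_f$ is a choice of one of two \emph{opposite} orientations, so after fixing reference orientations on the edges, the allowed sign triples on the three edges of $f$ form a coset of $\{(0,0,0),(1,1,1)\}\subset(\mathbb{Z}/2)^3$. That is a system of two \emph{inhomogeneous} (affine) equations per face, not a single linear one. This matters, because the coefficient matrix of such a system carries essentially no information about which edge of $f$ is large; that information lives entirely in the constant terms. In particular, the coefficient-matrix row spans attached to a face by $\tau^L$ and by $\tau^U$ are equal automatically (both are the ``parity-even'' subspace of $(\mathbb{Z}/2)^3$), so comparing row spans alone cannot distinguish the two systems and would leave the constant terms --- where the actual content of the lemma lives --- untouched.

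The per-tetrahedron grouping compounds this. You attribute the $S^L$-constraints at the bottom faces of $t$ and the $S^U$-constraints at the top faces of $t$ to $t$, and claim they span the same subspace of $(\mathbb{Z}/2)^{E(t)}$. But the bottom faces of $t$ involve the bottom diagonal of $t$ and never its top diagonal, while the top faces involve only the top diagonal; the two bundles of relations have different support and cannot span the same subspace, so the claimed local equivalence fails outright. The paper avoids all of this by working face-by-face and constructively: the key fact (Lemma~2.1 of~\cite{taut_veer_teich}, not Lemma~\ref{lemma:large_edges}) is that for each face $f$, the lower large edge and the upper large edge of $f$ are \emph{distinct} edges, both of the same colour as $f$. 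Consequently, starting from a transverse orientation on $\tau^L$ and the induced edge orientations, reversing the orientation of every edge of a single fixed colour produces edge orientations that are edge-oriented with respect to $\tau^U$ on every face simultaneously, giving the desired transverse orientation on $\tau^U$ directly, with no linear algebra and no per-tetrahedron case analysis.
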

\begin{proof}
	Suppose $\tau^L$ is transversely oriented. Pick the orientation on the edges of $\mathcal{V}$ determined by the transverse orientation on $\tau^L$.	
	Note that for any triangle $f \in F$ the lower large edge of $f$ and the upper large edge of~$f$ are different edges of $f$, but both are of the same colour as $f$ \cite[Lemma 2.1]{taut_veer_teich}. Figure \ref{fig:defn_edge-orientable} presents the lower and upper tracks in a veering triangle.	
	If we reverse the orientation of all edges of one colour, say blue, then the new orientations determine a transverse orientation on~$\tau^U$. 	
\end{proof}
\begin{figure}[h]
	\begin{center}
		\includegraphics[width=0.61\textwidth]{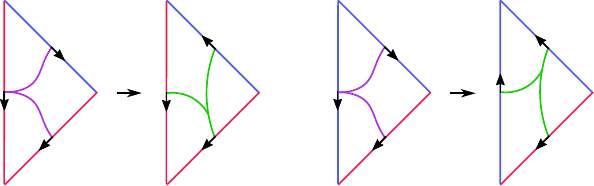} 
		%red triangle
		\put(-260,55){R}
		\put(-228,61){B}
		\put(-226,11){R}
		\put(-191,55){R}
		\put(-158,61){B}
		\put(-158,11){R}
		%blue triangle
		\put(-118,55){B}
		\put(-86,61){B}
		\put(-86,11){R}
		\put(-50,55){B}
		\put(-17,61){B}
		\put(-17,11){R}
	\end{center}
	\caption{Using a transverse orientation on the lower track to find a transverse orientation on the upper track. On the left: in a  triangle with two red edges. On the right: in a triangle with two blue edges.}
	\label{fig:defn_edge-orientable}
\end{figure}
\begin{definition}
	We say that a veering triangulation $\mathcal{V}$ is \emph{edge-orientable} if the upper track of $\mathcal{V}$ is transversely orientable.
\end{definition}
The name ``edge-orientable'' was suggested by Sam Taylor. It reflects the observation that restricting a transverse orientation on a dual train track to the edge midpoints endows the edges of the triangulation with an orientation.

By Lemma \ref{lemma:edge_orientable} a veering triangulation $\mathcal{V}$ is edge-orientable if and only if the lower track of $\mathcal{V}$ is transversely orientable. For the remainder of the paper we consider only the upper track and use a shorter notation $\tau = \tau^U$.
\begin{example}\label{ex:m003_neo}
	We observed in Example \ref{example:m003_upper_track} that the upper track of the veering triangulation \texttt{cPcbbbdxm\_10} of the figure-eight knot sister is not transversely orientable. Hence this triangulation is not edge-orientable. 
\end{example}
\subsection{Edge-orientability and the stable lamination of the underlying flow}\label{subsec:EO:to}
The upper track restricted to the two bottom faces of a tetrahedron $t$ differs from the upper track restricted to the two top faces of $t$ by a splitting; see Figure~\ref{fig:lower_tt}. Performing this split continuously sweeps out a 2-dimensional complex in $t$ which can be transformed into a branched surface $\mathcal{B}^U_t$; see Figure~6.2.(B) of \cite{SchleimSegLinks}. The union $\mathcal{B}^U = \bigcup\limits_{t\in T} \mathcal{B}^U_t$ forms a branched surface in $M$, called the \emph{upper branched surface} \cite[Subsection 6.1]{SchleimSegLinks}. 

Recall that a pseudo-Anosov flow determines a pair of 2-dimensional laminations in the manifold, the \emph{stable} and \emph{unstable} one \cite[Definition 7.1]{Fenley_geometry}.
It follows from the Agol-Gu\'eritaud's construction that if a veering triangulation~$\mathcal{V}$ comes from a pseudo-Anosov flow $\Psi$ then the stable lamination $\mathcal{L}$ of~$\Psi$ is fully carried by  $\mathcal{B}^U$; see \cite[Section~4]{LMT_flow} and \cite[Theorem 8.1]{SchleimSegLinks}. Therefore edge-orientability of $\mathcal{V}$ is equivalent to transverse orientability of $\mathcal{L}$.
\begin{lemma}\label{lemma:EO:to}
	Suppose that a veering triangulation $\mathcal{V}$ is constructed from a pseudo-Anosov flow $\Psi$. Let $\mathcal{L}$ denote the stable lamination of $\Psi$. Then $\mathcal{V}$ is edge-orientable if and only if $\mathcal{L}$ is transversely orientable. \qed
\end{lemma}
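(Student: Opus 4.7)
The plan is to prove the lemma by chaining together three equivalences: transverse orientability of the stable lamination $\mathcal{L}$, transverse orientability of the upper branched surface $\mathcal{B}^U$, and transverse orientability of the upper track $\tau = \tau^U$. The last of these is the definition of edge-orientability, so the lemma follows once the chain is established.

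First I would invoke the Agol--Gu\'eritaud construction: when $\mathcal{V}$ is derived from the pseudo-Anosov flow $\Psi$, the stable lamination $\mathcal{L}$ of $\Psi$ is fully carried by the upper branched surface $\mathcal{B}^U$. This is the key input from the construction and is recorded in the paragraph immediately preceding the lemma; I would cite \cite[Subsection~6.1]{SchleimSegLinks} (and the relevant Agol--Gu\'eritaud discussion) for this fact. Because $\mathcal{L}$ fills a regular neighbourhood of $\mathcal{B}^U$ (full carrying), a coherent transverse orientation of $\mathcal{L}$ descends to a transverse orientation of $\mathcal{B}^U$, and conversely any transverse orientation of $\mathcal{B}^U$ pulls back to a transverse orientation of $\mathcal{L}$. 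This gives the first equivalence: $\mathcal{L}$ is transversely orientable if and only if $\mathcal{B}^U$ is.

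Next I would establish the equivalence between transverse orientability of $\mathcal{B}^U$ and of the upper track $\tau$. Recall from the construction of $\mathcal{B}^U$ that inside each tetrahedron $t$ the surface $\mathcal{B}^U_t$ is obtained by continuously sweeping the upper track across $t$, interpolating between the configuration on the bottom faces and the configuration on the top faces (see Figure~\ref{fig:lower_tt}(b)). In particular, $\tau$ is the intersection of $\mathcal{B}^U$ with the horizontal branched surface $\mathcal{B}$, and a tubular neighbourhood of $\tau$ in $\mathcal{B}^U$ deformation retracts onto $\tau$. Consequently a nonvanishing vector field on $\mathcal{B}^U$ that is tangent to $\mathcal{B}^U$ and transverse to $\mathcal{L}$ restricts to a nonvanishing vector field on $\tau$ that is tangent to $\mathcal{B}$ and transverse to $\tau$; conversely, any transverse orientation on $\tau$ extends uniquely and consistently across each $\mathcal{B}^U_t$ because the sweepout is a homotopy through transversely oriented train tracks. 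This yields the second equivalence.

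Finally, by definition $\mathcal{V}$ is edge-orientable precisely when $\tau$ is transversely orientable, so combining the two equivalences completes the proof. The main subtlety to be careful about is the second step: one must verify that transverse orientations pass correctly through the branch locus of $\mathcal{B}^U$ (which is not entirely contained in $\mathcal{B}$), but this follows from the fact that the splittings realising the sweepout are local and preserve any chosen coherent orientation on the branches of~$\tau$.
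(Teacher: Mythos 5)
Your overall strategy matches the paper's (implicit) argument: the lemma carries a \qed{} precisely because its proof is the preceding paragraph, which records that $\mathcal{L}$ is fully carried by $\mathcal{B}^U$ and then asserts the equivalence; you unfold this into the chain \emph{$\mathcal{L}$ transversely orientable} $\Leftrightarrow$ \emph{$\mathcal{B}^U$ transversely orientable} $\Leftrightarrow$ \emph{$\tau^U$ transversely orientable}, which is indeed the intended reasoning. The first and third links are fine, but the sentence with which you establish the second does not parse. You describe a transverse orientation of $\mathcal{B}^U$ (carrying $\mathcal{L}$) as ``a nonvanishing vector field on $\mathcal{B}^U$ that is tangent to $\mathcal{B}^U$ and transverse to $\mathcal{L}$'', but no such vector field exists: $\mathcal{L}$ is carried by $\mathcal{B}^U$, so its leaves are locally parallel to $\mathcal{B}^U$, and a vector tangent to the $2$-dimensional $\mathcal{B}^U$ cannot be transverse to a $2$-dimensional lamination lying essentially in $\mathcal{B}^U$. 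The correct object is a co-orientation, i.e.\ a nonvanishing section of the normal bundle of $\mathcal{B}^U$ in the $3$-manifold $M$, exactly as the paper's definition of transverse orientability for the $1$-dimensional $\tau$ inside the $2$-dimensional $\mathcal{B}$ uses a vector field tangent to $\mathcal{B}$ and transverse to $\tau$ — codimension one in both cases. After this fix there is still a small point to supply: a vector transverse to $\mathcal{B}^U$ at a point $p \in \tau^U$ need not lie in $T_p\mathcal{B}$, so the co-orientation of $\mathcal{B}^U$ does not literally ``restrict'' to a vector field tangent to $\mathcal{B}$. What one should say is that since $T_p\mathcal{B}^U \cap T_p\mathcal{B} = T_p\tau^U$ along $\tau^U = \mathcal{B}^U \cap \mathcal{B}$, the two sides of $\mathcal{B}^U$ in $M$ are canonically identified with the two sides of $\tau^U$ in $\mathcal{B}$, and this identification transports co-orientations in both directions; the extension back over each $\mathcal{B}^U_t$ then works as you say, using the sweep-out structure (and is consistent with Lemma~\ref{lem:edge-oriented_tetrahedra}). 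With these repairs the argument is correct and is the same as the paper's.
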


\FloatBarrier

\section{The edge-orientation double cover}\label{sec:EO_cover}
Let $\mathcal{V}$ be a finite veering triangulation of a connected 3-manifold $M$, with the set $T$ of tetrahedra, the set $F$ of faces and the set $E$ of edges. Let $\tau = \tau^U$ be the upper track of $\mathcal{V}$. In this section we construct the \emph{edge-orientation double cover} $\Vor$ of $\mathcal{V}$. 

Let $f \in F$. The train track $\tau_f$ is transversely orientable; if we fix a transverse orientation on one half-branch of $\tau_f$ there is no obstruction to extend it over all half-branches of $\tau_f$. After fixing a transverse orientation on $\tau_f$ we say that $f$ is \emph{edge-oriented} if the orientation of the edges in the boundary of $f$ agrees with the orientation determined by the transverse orientation on $\tau_f$; see Figure \ref{fig:edge_oriented}.
\begin{figure}[h]
	\begin{center}
		\includegraphics[width=0.5\textwidth]{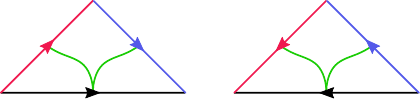} 
		\put(-205,15){R}
		\put(-122,15){B}
		\put(-90,15){R}
		\put(-9,15){B}
	\end{center}
	\caption{Edge-oriented triangles. There are two possibilities depending on the chosen transverse orientation on $\tau_f$.}
	\label{fig:edge_oriented}
\end{figure}

We are interested in extending the transverse orientation of $\tau_f$ further, not necessarily over the whole 2-skeleton of the triangulation (which might not be possible; see Example \ref{ex:m003_neo}), but over the four faces of a given tetrahedron. 
\begin{lemma}\label{lem:edge-oriented_tetrahedra}
	Let $\mathcal{V}$ be a veering triangulation. Fix a tetrahedron $t$ of $\mathcal{V}$ and let $f_i$, $i=1,2,3,4$, be the four faces of $t$. Denote by $\tau_i$  the restriction $\tau^U_{f_i}$ of the upper track~$\tau^U$ of $\mathcal{V}$ to $f_i$. Then the train track \[\tau_t = \bigcup\limits_{i=1}^4 \tau_i \subset\bigcup\limits_{i=1}^4 f_i\] is transversely orientable. 	
\end{lemma}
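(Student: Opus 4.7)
The plan is to exhibit a transverse orientation on $\tau_t$ explicitly, by choosing one on a single face and propagating around $\partial t$. Each individual $\tau_i$ is a tripod embedded in a disk $f_i$, hence transversely orientable on its own; moreover, a transverse orientation on $\tau_i$ is equivalent to a choice of orientation on each of the three edges bounding $f_i$, since the transverse orientation at the midpoint of an edge $e \subset \partial f_i$ determines, together with the direction of the half-branch meeting $e$, a direction along $e$.

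First I would fix an arbitrary transverse orientation on $\tau_1$. This induces an orientation on each of the three edges of $f_1$. For every such edge $e$, shared with a unique face $f_j$ among $f_2, f_3, f_4$, the orientation of $e$ determines the direction of the unique half-branch of $\tau_j$ meeting $e$, and this extends uniquely to a transverse orientation on all of $\tau_j$. Thus each of $\tau_2, \tau_3, \tau_4$ inherits a candidate transverse orientation, and the only thing to verify is consistency on the three edges of $t$ not lying in $f_1$: for each such edge $e'$, the two faces of $\{f_2, f_3, f_4\}$ meeting at $e'$ each induce an orientation on $e'$, and these must agree.

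By Lemma \ref{lemma:large_edges}, the combinatorics of $\tau_t$ inside $t$ is entirely determined by the veering colouring of $t$: the upper large edges of the bottom faces of $t$ are the bottom diagonal of $t$, while the upper large edges of the top faces are the equatorial edges whose colour matches the top diagonal of $t$. This reduces the verification to the two configurations for the colour of the top diagonal, drawn in Figure \ref{fig:lower_tt}(b). In each of these two cases I would read off directly from the picture that the orientations propagated around $t$ from the choice on $\tau_1$ agree on each of the remaining three edges.

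The main obstacle is the bookkeeping of which edges are large for which face and which faces share which edge of $t$; once this is organised, the veering structure leaves only two combinatorially rigid configurations, and the consistency check in each is an immediate inspection of Figure \ref{fig:lower_tt}(b).
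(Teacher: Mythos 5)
Your proposal is correct and matches the paper's argument: fix a transverse orientation on $\tau_1$, propagate it across the three edges of $f_1$ to $\tau_2$, $\tau_3$, $\tau_4$, and verify consistency along the remaining three edges of $t$ by inspecting Figure~\ref{fig:lower_tt}(b) in the two cases determined by the colour of the top diagonal. One small correction: the fact that the upper large edge of a bottom face of $t$ is the bottom diagonal of $t$ follows directly from the definition of the upper track rather than from Lemma~\ref{lemma:large_edges}, which addresses the upper large edges of the top faces (and the lower large edges of the bottom faces), though this misattribution does not affect the argument.
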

\begin{proof} 
	We can transport a fixed transverse orientation from $\tau_1$ to $\tau_i$, $i=2,3,4$, across the edges of $f_1$ without twisting. Using Figure~\ref{fig:lower_tt} one can check that the obtained transverse orientations on $\tau_i$'s agree along every edge of $t$.	
\end{proof}

\begin{definition}
	Let $t \in T$. Fix a transverse orientation on the upper track $\tau_t$ restricted to the faces of $t$; this is possible by Lemma \ref{lem:edge-oriented_tetrahedra}. We say that $t$ is \emph{edge-oriented} if the orientation on the edges of $t$ agrees with the orientation determined by the transverse orientation on $\tau_t$. In that case every face of $t$ is edge-oriented.
\end{definition}
We construct the edge-orientation double cover $\Vor$ of $\mathcal{V}$ using edge-oriented tetrahedra --- twice as many as the number of tetrahedra of $\mathcal{V}$. Then we use the fact that every face can be edge-oriented in precisely two ways (see Figure \ref{fig:edge_oriented}) to argue that the faces of the edge-oriented tetrahedra can be identified in pairs to give a double cover of $\mathcal{V}$.
\begin{construction}(Edge-orientation double cover)\label{EOconstruction}
Let $\mathcal{V}$ be a veering triangulation with the set $T$ of tetrahedra, the set $F$ of triangular faces, and the set $E$ of edges. For $t \in T$ we take two copies of $t$ and denote them by $t, \overline{t}$. Both $t, \overline{t}$ are endowed with the upper track of $\mathcal{V}$ restricted to the faces of $t$. We denote the track in~$t$ by~$\tau_t$ and the track in $\overline{t}$ by $\overline{\tau}_t$.

We fix an orientation on the bottom diagonal  of $t$ and choose the \emph{opposite} orientation on the bottom diagonal of $\overline{t}$. They determine (opposite) transverse orientations on ~$\tau_t$,~$\overline{\tau}_t$; see Lemma \ref{lem:edge-oriented_tetrahedra}. 
We orient the edges of $t, \overline{t}$ so that they are edge-oriented by~$\tau_t$,~$\overline{\tau}_t$, respectively. 

Let $\overline{T} = \lbrace \overline{t} \ | \ t \in T\rbrace$. Every face $f \in F$ appears four times as a face of an edge-oriented tetrahedron from $T \cup \overline{T}$: twice as a bottom face and twice as a top face. Moreover, its two bottom copies are edge-oriented in the opposite way; the same is true for its two top copies. Hence we can identify the faces of the tetrahedra $T \cup \overline{T}$ in pairs to obtain an edge-oriented veering triangulation which double covers~$\mathcal{V}$.
\end{construction}
\begin{definition}
	Let $\mathcal{V}$ be a veering triangulation. The \emph{edge-orientation double cover} $\Vor$ of $\mathcal{V}$ is the ideal triangulation obtained by Construction \ref{EOconstruction}. %We denote the underlying manifold of the triangulation $\Vor$ by $\Mor$.
\end{definition}
\begin{example}
	Let $\mathcal{V}$ denote the veering triangulation \texttt{cPcbbbdxm\_10} of the figure-eight knot sister (manifold m003). It is presented in Figure \ref{fig:m003}. 
	We follow Construction~\ref{EOconstruction} to build the edge-orientation double cover $\Vor$ of $\mathcal{V}$. The result is presented in Figure~\ref{fig:m003_cover}. 
	\begin{figure}[h]
		\begin{center}
			\includegraphics[width=0.75\textwidth]{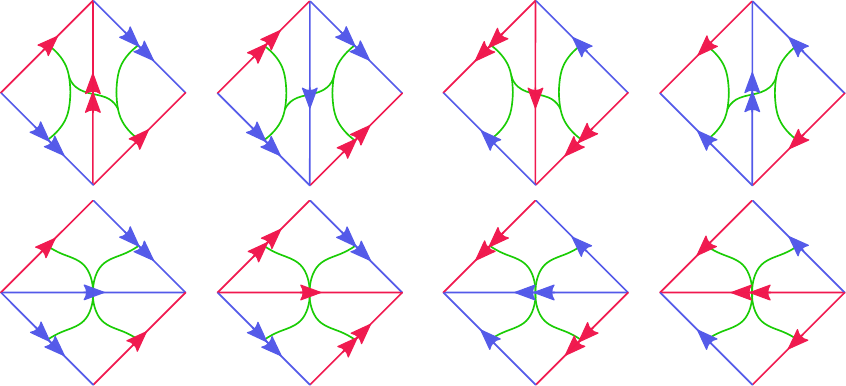} 
			\put(-298,104){$f_1$}
			\put(-258,104){$f_3$}
			\put(-219,104){$\overline{f_2}$}
			\put(-180,104){$\overline{f_0}$}
			\put(-279,52){$f_0$}
			\put(-200,52){$f_3$}
			\put(-279,11){$f_2$}
			\put(-200,11){$f_1$}
			\put(-137,104){$\overline{f_1}$}
			\put(-98,104){$\overline{f_3}$}
			\put(-59,104){$f_2$}
			\put(-19,104){$f_0$}
			\put(-118,51){$\overline{f_0}$}
			\put(-39,51){$\overline{f_3}$}
			\put(-118,10){$\overline{f_2}$}
			\put(-39,10){$\overline{f_1}$}
		\end{center}
		\caption{The edge-orientation double cover of the veering triangulation  $\mathcal{V}$ = \texttt{cPcbbbdxm\_10} of the figure-eight knot sister. See Remark \ref{remark:pictures} for an explanation of how to read this figure. Two lifts of an edge of~$\mathcal{V}$ have the same colour, but the orientation on one of them is indicated by a single arrow, while on the other --- by a double arrow.}
		\label{fig:m003_cover}
	\end{figure}
\end{example}

Since we build the cover $\Vor$ using edge-oriented tetrahedra, we immediately get the following lemma.
\begin{lemma}\label{lem:Vor_is_EO}
	Let $\mathcal{V}$ be a veering triangulation. The edge-orientation double cover $\Vor$ of $\mathcal{V}$ is edge-orientable. \qed
\end{lemma}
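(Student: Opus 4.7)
The plan is to unpack the construction of $\Vor$ and observe that the gluings were chosen precisely so that the locally-defined transverse orientations on the upper track fit together into a global one. Concretely, I want to produce a transverse orientation on the upper track $\tau^U_{\Vor}$ by declaring, for each tetrahedron $t$ (or $\overline{t}$) of $\Vor$, that the transverse orientation on $\tau^U_{\Vor}$ restricted to the faces of that tetrahedron is the one used in Construction \ref{EOconstruction}. By Lemma \ref{lem:edge-oriented_tetrahedra} this is a well-defined transverse orientation on the portion of $\tau^U_{\Vor}$ contained in the four faces of a single tetrahedron, so the only content is to check consistency across the face identifications.

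First I would recall that in Construction \ref{EOconstruction} every face $f$ of $\mathcal{V}$ has four preimages among the faces of $T \cup \overline{T}$: two appear as bottom faces of edge-oriented tetrahedra (one in $t, \overline{t}$ with $t$ the tetrahedron immediately above $f$) and two appear as top faces (one in $t, \overline{t}$ with $t$ the tetrahedron immediately below $f$). Because opposite choices of orientation on the bottom diagonals of $t$ and $\overline{t}$ induce opposite transverse orientations on $\tau_t, \overline{\tau}_t$ (Lemma \ref{lem:edge-oriented_tetrahedra}), the two bottom preimages of $f$ carry opposite transverse orientations, and likewise for the two top preimages. The pair-gluing described in the construction then identifies a bottom preimage with a top preimage precisely so that the induced edge-orientations on the boundary edges of $f$ agree, and by the correspondence between edge-orientations of an edge-oriented triangle and transverse orientations on $\tau_f$ (Figure \ref{fig:edge_oriented}), this is the same as requiring the two transverse orientations on $\tau_f$ to agree.

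Therefore the transverse orientations assembled tetrahedron-by-tetrahedron agree along every face of $\Vor$, and so they glue to a global nowhere-zero vector field on $\tau^U_{\Vor}$ tangent to the horizontal branched surface and transverse to $\tau^U_{\Vor}$. By definition this means $\tau^U_{\Vor}$ is transversely orientable, i.e.\ $\Vor$ is edge-orientable.

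The only place requiring care is verifying that the pairing of bottom and top preimages prescribed by the construction really is the one consistent with transverse orientations (as opposed to flipping one), but this is forced by the requirement in the construction that the edge-orientations along the shared edges match. The rest is bookkeeping, and I do not expect any deeper obstruction.
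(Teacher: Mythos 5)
Your proof is correct and follows essentially the same logic as the paper's, which simply remarks that the lemma is immediate because Construction \ref{EOconstruction} assembles $\Vor$ out of edge-oriented tetrahedra and glues faces so that edge-orientations (equivalently, transverse orientations on $\tau^U$) match along every shared face. You have expanded that one-line observation into an explicit check that the locally-defined transverse orientations are consistent under the face pairings; this is a faithful unpacking of the construction rather than a different route.
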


\begin{lemma}
	Let $\mathcal{V}$ be a veering triangulation. The edge-orientation double cover $\Vor$ is connected if and only if $\mathcal{V}$ is not edge-orientable.
\end{lemma}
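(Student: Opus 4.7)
The plan is to exploit the fact that Construction \ref{EOconstruction} produces a natural simplicial 2-to-1 covering map $p:\Vor \to \mathcal{V}$, together with a fixed-point-free deck involution $\iota:\Vor \to \Vor$ swapping $t \leftrightarrow \overline{t}$ for every $t \in T$ (and the corresponding faces/edges). Since $\mathcal{V}$ is connected (as $M$ is), $\iota$ permutes the connected components of $\Vor$, so $\Vor$ has either one or two components. Thus ``$\Vor$ disconnected'' is equivalent to ``$\iota$ swaps two components,'' which is equivalent to ``there exists a section $s:\mathcal{V}\to \Vor$ of the covering map $p$.'' The proof then reduces to showing that a section exists if and only if $\mathcal{V}$ is edge-orientable.

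For the ``$\Leftarrow$'' direction, assume $\Vor$ is disconnected, with components $C_1, C_2$ satisfying $\iota(C_1)=C_2$. Then $p|_{C_1}:C_1\to \mathcal{V}$ is a homeomorphism and provides a section $s$. For each $t \in T$ the chosen lift $s(t) \in \{t,\overline{t}\}$ comes equipped with a preferred transverse orientation on $\tau_t^U$ (as in Construction \ref{EOconstruction}). Because adjacent lifts are glued in $\Vor$ precisely when their induced edge-orientations on the common face agree, these transverse orientations fit together into a global transverse orientation on $\tau^U = \bigcup_{f \in F} \tau_f^U$. Hence $\mathcal{V}$ is edge-orientable.

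For the ``$\Rightarrow$'' direction, assume $\mathcal{V}$ is edge-orientable and fix a global transverse orientation $\rho$ on $\tau^U$. For each $t \in T$ exactly one of $\{t,\overline{t}\}$ carries the transverse orientation on $\tau_t$ agreeing with $\rho$; call this the \emph{preferred} lift, and let $T^+$ (resp.\ $T^-$) denote the collection of preferred (resp.\ non-preferred) lifts. By the edge-orientability hypothesis, whenever two tetrahedra $t,t' \in T$ share a face $f$ in $\mathcal{V}$, the edge-orientations induced by $\rho$ on $f \subset t$ and $f \subset t'$ agree. Therefore the face identifications of Construction \ref{EOconstruction} glue preferred lifts only to preferred lifts and non-preferred lifts only to non-preferred lifts, showing that $T^+$ and $T^-$ span two disjoint subcomplexes whose union is $\Vor$. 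So $\Vor$ is disconnected (with exactly two components, interchanged by $\iota$).

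The only non-routine step is the verification that the face gluings of Construction \ref{EOconstruction} can be described as ``glue according to matching edge-orientations''; this is essentially already contained in the construction, but it needs to be stated explicitly so that the correspondence between sections of $p$ and transverse orientations of $\tau^U$ is clean.
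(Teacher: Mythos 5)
Your proposal is correct and follows essentially the same route as the paper's proof: the forward direction ($\mathcal{V}$ edge-orientable $\Rightarrow \Vor$ disconnected) is identical, showing that a global transverse orientation on $\tau^U$ splits the lifted tetrahedra into two closed subcomplexes. For the converse, the paper invokes Lemma~\ref{lem:Vor_is_EO} and the fact that a component of a disconnected double cover is a copy of $\mathcal{V}$, while you re-derive the content of that lemma inline by pushing the per-tetrahedron transverse orientations down along a section; these are the same argument packaged differently, and your preliminary observation about the deck involution is the standard reason a double cover of a connected space has one or two components.
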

\begin{proof}
	Suppose that $\mathcal{V}$ is edge-orientable. Fix the orientations on the edges of $\mathcal{V}$ such that every face of $\mathcal{V}$ is edge-oriented. With this choice of orientations following Construction \ref{EOconstruction} gives a disjoint union of two copies of $\mathcal{V}$.
	Conversely, if $\Vor$ is disconnected, then it is a disjoint union of two copies of $\mathcal{V}$. By Lemma \ref{lem:Vor_is_EO} the triangulation $\Vor$ is edge-orientable and hence so is $\mathcal{V}$.
\end{proof}
\subsection{The edge-orientation homomorphism}\label{subsec:EO_homo}
The edge-orientation double cover $\Vor \rightarrow \mathcal{V}$   determines  a homomorphism %from the fundamental group of $M$ to the multiplicative group $\lbrace -1, 1 \rbrace$ given by
\begin{gather}\label{eqn:edge-orientation_homo}
	\omega: \pi_1(M) \rightarrow \lbrace -1, 1\rbrace \nonumber \\
	\gamma \mapsto \begin{cases} 1 &\text{if $\gamma$ lifts to a loop in $\mathcal{V}^{or}$}\\
		-1 &\text{otherwise}.\end{cases}\end{gather}
We call this homomorphism the \emph{edge-orientation homomorphism} of $\mathcal{V}$. 

Let $H$ be a quotient of $\pi_1(M)$ with the corresponding quotient map \mbox{$q: \pi_1(M) \rightarrow H$}.
We are interested in a combinatorial condition which ensures that the edge-orientation homomorphism $\omega$ factors through~$q$. The remaining factor \mbox{$H \rightarrow \lbrace -1, 1\rbrace$} is used in Corollary \ref{cor:Alex=taut} and Theorem~\ref{thm:main}.

The map $q$ determines a regular cover $M^q$ of $M$ with the deck group isomorphic to~$H$ and the fundamental group isomorphic to the kernel of $q$. This cover admits a veering triangulation $\mathcal{V}^q$ induced by the veering triangulation $\mathcal{V}$ of~$M$. 

The reasoning in the following lemma is completely analogous to the case of factoring the orientation character; see for example \cite[Lemma 1.1]{oriented-cover}. 
\begin{lemma}\label{lem:edge-orient-factors}
	The veering triangulation $\mathcal{V}^q$ is edge-orientable if and only if the edge-orientation homomorphism $\omega: \pi_1(M) \rightarrow \lbrace -1, 1\rbrace$  factors through $q$.
\end{lemma}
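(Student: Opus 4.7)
The plan is to reduce the statement to a standard characterisation of when a connected double cover is the trivial product, applied to the edge-orientation double cover of $\mathcal{V}^H$. The key observation is that the edge-orientation homomorphism of the induced veering triangulation $\mathcal{V}^H$ is simply the restriction of $\omega$ to $\pi_1(M^H)$, so factoring of $\omega$ through $H$ is literally the triviality of this restriction.

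More precisely, the first step is to unpack the definition using the preceding lemma. By Lemma \ref{lem:Vor_is_EO} and the characterisation of connectedness of $\Vor$ proved just before Subsection \ref{subsec:EO_homo}, applied to the veering triangulation $\mathcal{V}^H$ of $M^H$, the triangulation $\mathcal{V}^H$ is edge-orientable if and only if its own edge-orientation double cover $(\mathcal{V}^H)^{or}$ is disconnected, equivalently if and only if its edge-orientation homomorphism $\omega^H: \pi_1(M^H) \to \lbrace -1, 1 \rbrace$ is trivial.

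The second step is to identify $\omega^H$ with $\omega \circ i_\ast$, where $i_\ast: \pi_1(M^H) \hookrightarrow \pi_1(M)$ is induced by the covering $p: M^H \to M$. The upper track of $\mathcal{V}^H$ is the preimage $p^{-1}(\tau^U)$ of the upper track of $\mathcal{V}$, since Construction \ref{EOconstruction} is local on tetrahedra and therefore commutes with covers. Consequently, transporting a transverse orientation of the upper track along a loop $\gamma' \subset M^H$ is the same as transporting it along the projected loop $p \circ \gamma' \subset M$; the loop $\gamma'$ lifts to $(\mathcal{V}^H)^{or}$ if and only if $p\circ \gamma'$ lifts to $\mathcal{V}^{or}$, giving $\omega^H(\gamma') = \omega(i_\ast(\gamma'))$.

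The third step is then formal: $\omega \circ i_\ast$ is trivial if and only if $i_\ast(\pi_1(M^H)) \subseteq \ker\omega$. Since $i_\ast(\pi_1(M^H)) = \ker(\pi_1(M) \twoheadrightarrow H)$ by the definition of $M^H$, this is equivalent to $\ker(\pi_1(M) \to H) \subseteq \ker \omega$, which by the universal property of quotients is precisely the condition that $\omega$ factors through $H$. The mildly delicate point, which I would state explicitly, is Step 2 --- that the edge-orientation structure pulls back naturally under the cover --- but it follows directly from the local nature of Construction \ref{EOconstruction} and requires no further computation.
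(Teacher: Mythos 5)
Your proof is correct and follows essentially the same route as the paper: both arguments reduce the statement to showing that $\ker(\pi_1(M)\to H)\leq \ker\omega$ precisely when the restriction $\omega^H$ of $\omega$ to $\pi_1(M^H)$ is trivial. The paper's proof leaves the identification $\omega^H=\omega\circ i_\ast$ and the equivalence ``$\mathcal{V}^H$ edge-orientable $\Leftrightarrow$ $\omega^H$ trivial'' implicit (it simply writes $\omega(\gamma)$ for $\gamma\in\ker q$ where your Step~2 would be invoked), whereas you make both explicit; your Step~3 also replaces the paper's hands-on converse (exhibiting $\beta,\beta\gamma$ with equal $q$-images but opposite $\omega$-values) with the universal property of quotients. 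These are cosmetic differences; the underlying argument is the same.
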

\begin{proof}
	The kernel of $q$ is isomorphic to $\pi_1\left(M^q\right)$. To show that $\omega$ factors through $q$ it is enough to show that $\ker q \leq \ker \omega$. This is clearly the case when $\mathcal{V}^q$ is edge-orientable, because then the edge-orientation homomorphism $\omega^q: \pi_1\left(M^q \right) \rightarrow \lbrace -1,1\rbrace$ is trivial. Therefore for every $\gamma \in \ker q \cong \pi_1\left(M^q\right)$ we have $\omega(\gamma)=1$. 
	
	\[\begin{tikzcd}
		\pi_1\left(M^q\right) \arrow[d] \arrow[rd, "\omega^q",shift left=1.5ex] & \\
		\pi_1(M)  \arrow[r, "\omega"] \arrow[d, "q"]& \lbrace -1, 1\rbrace\\
		H  \arrow[ru, dashrightarrow, swap, shift right=1.5ex ]&
	\end{tikzcd}\]
	
	Conversely, suppose that $\mathcal{V}^q$ is not edge-orientable. Then the edge-orientation homomorphism $\omega^q: \pi_1\left(M^q \right) \rightarrow \lbrace -1,1\rbrace$ is not trivial. Let $\gamma \in \ker q \cong \pi_1\left(M^q\right)$ be an element such that $\omega(\gamma)=-1$ and let $\beta \in \pi_1(M)$. Then $q(\beta) = q(\beta\gamma)$ and
	\[\omega(\beta\gamma) = \omega(\beta)\omega(\gamma) = -\omega(\beta) \neq \omega(\beta).\]	
	This implies that $\omega$ does not factor through $q$.
\end{proof}

\section{The taut polynomial and the Alexander polynomial}\label{sec:taut:alex:def}
Let $M$ be a connected 3-manifold equipped with a finite veering triangulation $\mathcal{V}$, with the set~$T$ of tetrahedra, the set $F$ of faces and the set $E$ of edges.  Let
\begin{equation}\label{defn:HM}
	H_M = \bigslant{H_1(M;\zz)}{\text{torsion}}\end{equation}
and $b_1(M) = \mathrm{rank} \ H_M$. 
The regular cover of $M$ determined by $H_M$ is called the \emph{maximal free abelian cover} of~$M$ and denoted by $\Mab$. By $\Vab$ we denote the veering triangulation of $\Mab$ induced by $\mathcal{V}$. 
%defer till later
%The regular cover of $M$ determined by $H_M$ is called the \emph{maximal free abelian cover} of~$M$ and denoted by $\Mab$. By $\Vab$ we denote the veering triangulation of $\Mab$ induced by $\mathcal{V}$. The sets of ideal tetrahedra, triangles and edges of $\Vab$  can be identified with $H_M\times T, H_M\times F, H_M\times E$, respectively. We denote elements of $H_M\times T, H_M\times F, H_M\times E$ by $h\cdot t$, $h\cdot f$, $h\cdot e$, respectively, where $h\in H_M$, $t \in T$, $f\in F$, $e \in E$.
%The free abelian groups generated by $H_M\times T$, $H_M\times F$ and  $H_M\times E$ are isomorphic as $\ZHM$-modules to the free $\zz\lbrack H_M\rbrack$-modules $\zz\lbrack H_M\rbrack^T, \zz\lbrack H_M\rbrack^F, \zz\lbrack H_M\rbrack^E$ generated by $T,F,E$, respectively.
%We  orient ideal simplices of $\Vab$ in such a way that the restriction of $\Mab\rightarrow M$ to each ideal simplex is orientation-preserving. 

In this section we recall the definitions of twisted Alexander polynomials $\Delta_M^{\varphi\otimes \pi}$ of $M$ and the taut polynomial $\Theta_{\mathcal{V}}$ of $\mathcal{V}$. We are interested in a particular twisted Alexander polynomial $\Delta_M^{\omega\otimes \pi}$ constructed using the representation $\omega \otimes \pi$ obtained by tensoring the edge-orientation homomorphism $\omega:\pi_1(M) \rightarrow \lbrace -1, 1 \rbrace$ with the projection \mbox{$\pi: \pi_1(M) \rightarrow H_M$} determined by abelianisation and killing the torsion. 
We prove that the taut polynomial $\Theta_\V$ is equal to the twisted Alexander polynomial $\Delta_M^{\omega\otimes \pi}$ and discuss consequences of this result.
Then we prove formulas relating $\Theta_{\mathcal{V}}$ and the untwisted Alexander polynomial~$\Delta_M$ of $M$. There are two formulas, one which holds when the maximal free abelian cover $\Vab$ of~$\mathcal{V}$ is edge-orientable, and the other when it is not.
\subsection{Polynomial invariants of finitely presented $\ZH$-modules}
The definitions of  both the taut and the twisted Alexander polynomials follow the same pattern. Namely, they are derived from the Fitting ideals of certain modules associated to covers of $M$. For that reason in this subsection we recall definitions of Fitting ideals and their invariants.

Let $H$ be a finitely generated free abelian group. Let $\mathcal{M}$ be a finitely presented module over the integral group ring $\zz \lbrack H \rbrack$. Then there exist integers $k, l \in \mathbb{N}$ and an exact sequence
\[\zz\lbrack H \rbrack^k \overset{A}{\longrightarrow} \zz\lbrack H \rbrack^l \longrightarrow \mathcal{M} \longrightarrow  0\]
of $\zz\lbrack H \rbrack$-homomorphisms called a \emph{free presentation} of $\mathcal{M}$. The matrix of $A$, written with respect to any bases of $\zz\lbrack H \rbrack^k$ and $\zz\lbrack H \rbrack^l$, is called a \emph{presentation matrix} for $\mathcal{M}$.
\begin{definition} \cite[Section 3.1]{Northcott}
	Let $\mathcal{M}$ be a finitely presented $\ZH$-module with a presentation matrix $A$ of dimension $l\times k$. We define the $i$-th \emph{Fitting ideal} $\Fit_i(\mathcal{M})$ of~$\mathcal{M}$ to be the ideal in $\ZH$ generated by the $(l-i)\times(l-i)$ minors of $A$. 
\end{definition}
In particular $\Fit_i(\mathcal{M}) = \ZH$ for $i\geq l$, as the determinant of the empty matrix equals 1, and $\Fit_i(\mathcal{M}) = 0$ for $i<0$ or $i<l-k$. The Fitting ideals are independent of the choice of a free presentation of $\mathcal{M}$ \cite[p. 58]{Northcott}. 
\begin{remark*}
	Fitting ideals are called \emph{determinantal ideals} in \cite{Traldi} and \emph{elementary ideals} in \cite[Chapter VIII]{crow_fox}.
\end{remark*}

For any finitely generated ideal $I \subset \ZH$ there exists a unique minimal principal ideal $\overline{I} \subset \ZH$ which contains it \cite[p. 117]{crow_fox}. The ideal $\overline{I}$ is generated by the greatest common divisor of the generators of  $I$ \cite[p. 118]{crow_fox}. This motivates the following definition.
\begin{definition}
Let $\mathcal{M}$ be a finitely presented $\ZH$-module. We define the $i$-th \emph{Fitting invariant} of $\mathcal{M}$ to be the greatest common divisor of elements of $\Fit_i(\mathcal{M})$. When $\Fit_i(\mathcal{M}) = (0)$ we set the $i$-th Fitting invariant of $\mathcal{M}$ to be equal to 0.
\end{definition}
Note that Fitting invariants are well-defined only up to a unit in $\ZH$. In particular, all equalities proved in this paper hold only up to a unit in the appropriate integral group ring. 

\subsection{Twisted Alexander polynomials}\label{subsec:twisted} 
%In this subsection we consider the manifold~$M$ in its compact (truncated) model. This allows  us to define a polynomial $\Delta^{\varphi \otimes \pi}_{(M, \partial M)}$ associated to the pair $(M, \partial M)$ and a certain tensor representation $\varphi \otimes \pi$ of $\pi_1(M)$. 

Let $\widetilde{M}$ be the universal cover of $M$. It admits a veering triangulation $\widetilde{\mathcal{V}}$ induced by~$\mathcal{V}$. The sets of ideal tetrahedra, triangles and edges of $\Vuniv$  can be identified with $\pi_1(M)\times T, \pi_1(M)\times F, \pi_1(M)\times E$, respectively. We  orient ideal simplices of $\Vuniv$ in such a way that the restriction of $\widetilde{M}\rightarrow M$ to each ideal simplex is orientation-preserving. %We denote elements of $H_M\times T, H_M\times F, H_M\times E$ by $h\cdot t$, $h\cdot f$, $h\cdot e$, respectively, where $h\in H_M$, $t \in T$, $f\in F$, $e \in E$.
The free abelian groups generated by $\pi_1(M)\times T$, $\pi_1(M)\times F$ and  $\pi_1(M)\times E$ are isomorphic as $\zz\lbrack \pi_1(M)\rbrack$-modules to the free $\zz\lbrack \pi_1(M)\rbrack$-modules $\zz\lbrack \pi_1(M)\rbrack^T, \zz\lbrack \pi_1(M)\rbrack^F, \zz\lbrack \pi_1(M)\rbrack^E$ generated by $T,F,E$, respectively.

If we truncate ideal vertices of $\widetilde{\V}$, every ideal tetrahedron becomes a polyhedron with four hexagonal faces and four triangular faces.  Hexagonal faces of a truncated ideal tetrahedron correspond to ideal faces of the tetrahedron before truncating, while triangular faces are contained in the boundary $\partial M$. Thus we have the following identifications between the cellular   chain groups of the pair $(\widetilde{M}, \partial \widetilde{M})$ and the aforementioned free  $\zz\lbrack \pi_1(M)\rbrack$-modules generated by $T, F, E$:
\begin{gather*}C_3(\widetilde{M}, \partial \widetilde{M})  \cong \zz\lbrack \pi_1(M)\rbrack^T,\\
C_2(\widetilde{M}, \partial \widetilde{M})  \cong \zz\lbrack \pi_1(M)\rbrack^F,\\
C_1(\widetilde{M}, \partial \widetilde{M})  \cong \zz\lbrack \pi_1(M)\rbrack^E.
\end{gather*}

In other words, the cellullar chain complex $\mathcal{C}(\widetilde{M},\partial \widetilde{M})$ can be identified with
\[0\longrightarrow \zz\lbrack \pi_1(M)\rbrack^T \overset{\widetilde{\partial}_3}{\longrightarrow} \zz\lbrack \pi_1(M)\rbrack^F \overset{\widetilde{\partial}_2}{\longrightarrow} \zz\lbrack \pi_1(M)\rbrack^E \overset{\widetilde{\partial}_1}{\longrightarrow} 0 \longrightarrow 0,\]
where $\widetilde{\partial}_i$ is the cellular boundary map associated to the CW-structure on $(\widetilde{M},\partial \widetilde{M})$ obtained by truncating $\Vuniv$. 

Denote by $\pi: \pi_1(M)\rightarrow H_M$ the natural projection and fix a representation \mbox{$\varphi:\pi_1(M) \rightarrow \mathrm{GL}(k, \zz)$}. These homomorphisms yield  the \emph{tensor representation} 
\begin{gather*}\varphi \otimes \pi: \pi_1(M) \rightarrow \mathrm{GL}(k,\ZHM)\\
	\gamma \mapsto \varphi(\gamma)\cdot \pi(\gamma).
\end{gather*}
Using $\varphi \otimes \pi$ we may view $\ZHM^k$ as a $\zz\lbrack \pi_1(M)\rbrack$-module. Tensoring $\mathcal{C}(\widetilde{M},\partial \widetilde{M})$ with $\ZHM^k$ yields a chain complex $\mathcal{C}^{\varphi\otimes \pi}(M, \partial M)$ of $\ZHM$-modules.
We denote the first homology of $\mathcal{C}^{\varphi\otimes \pi}(M, \partial M)$ by $A^{\varphi \otimes \pi}(M,\partial M)$, and the zeroth Fitting invariant of  $A^{\varphi \otimes \pi}(M,\partial M)$ by $\Delta^{\varphi \otimes \pi}_{(M, \partial M)}$. 
In the case when $\varphi$ is the trivial 1-dimensional representation we use the notation $A(M, \partial M)$ and $\Delta_{(M, \partial M)}$ instead. 
 We can write a free presentation for $A(M, \partial M)$ as
\begin{equation}\label{pres:Alex}\ZHM^F \xrightarrow{\partial} \ZHM^E \longrightarrow A(M, \partial M) \longrightarrow 0,\end{equation}
where $\partial = \partial_2$ is the cellular boundary map associated to the CW-structure on \linebreak $(\Mab, \partial \Mab)$ obtained by truncating $\Vab$. We will always assume that the orientation of a face $f$ is determined by its coorientation and the right hand rule.

By performing the same algebraic operations to the absolute cellular chain complex $\mathcal{C}(\widetilde{M})$ we can define the polynomial $\Delta^{\varphi \otimes \pi}_{M}$, called the \emph{Alexander polynomial} of~$M$ \emph{twisted by the representation} $\varphi \otimes \pi$.  In particular, the untwisted Alexander polynomial~$\Delta_M$ is just the zeroth Fitting invariant of~$H_1(M^{fab};\zz)$. We refer the reader to~\cite{FriedlVidussi_survey} for an exposition on twisted Alexander polynomials.

From Milnor's result on the Reidemeister torsion \cite[Theorem~1']{Milnor_duality} we can deduce that the polynomial $\Delta^{\varphi \otimes \pi}_{(M, \partial M)}$ introduced at the beginning of this subsection is just a twisted Alexander polynomial.

\begin{lemma}[Theorem~1' of \cite{Milnor_duality}]\label{lem:relative=Alex}
Let $M$ be a compact, connected, triangulated 3-manifold
whose boundary is nonempty and consists of tori. Then
\[\pushQED{\qed}
\Delta^{\varphi \otimes \pi}_{(M, \partial M)} = \Delta^{\varphi \otimes \pi}_M.\qedhere
\popQED\]
\end{lemma}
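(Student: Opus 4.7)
The plan is to derive the equality from the long exact sequence of the pair $(M,\partial M)$ in $\varphi\otimes\pi$-twisted homology, combined with the fact that the contribution from each boundary torus cancels because $\chi(T^2)=0$.

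First I would form the short exact sequence of cellular chain complexes over $\ZHM$
\[0 \longrightarrow \mathcal{C}^{\varphi\otimes\pi}(\partial M) \longrightarrow \mathcal{C}^{\varphi\otimes\pi}(M) \longrightarrow \mathcal{C}^{\varphi\otimes\pi}(M, \partial M) \longrightarrow 0\]
coming from a triangulation of $M$ that restricts to a triangulation of $\partial M$. Passing to the field of fractions $Q=Q(\ZHM)$, this produces a short exact sequence of complexes of $Q$-vector spaces, with an associated long exact sequence in homology. Standard multiplicativity of Reidemeister torsion under short exact sequences of chain complexes then yields a relation
\[\tau^{\varphi\otimes\pi}(M) \;=\; \tau^{\varphi\otimes\pi}(M,\partial M)\cdot\tau^{\varphi\otimes\pi}(\partial M)\]
valid up to units in $\ZHM$, where $\tau$ denotes the Reidemeister torsion of the relevant complex (equivalently, the zeroth Fitting invariant of its first homology in the acyclic case).

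Next I would show that $\tau^{\varphi\otimes\pi}(\partial M)$ is a unit. Since the torsion is multiplicative under disjoint unions and $\partial M$ is a disjoint union of tori $T_1\sqcup\cdots\sqcup T_n$, it suffices to handle a single torus $T=T^2$. Using the product structure $T\cong S^1\times S^1$, the twisted chain complex of $T$ decomposes as the algebraic tensor product of two twisted $S^1$-complexes, each with Euler characteristic zero. A direct determinantal computation (for generators $a,b\in\pi_1(T)$ one essentially writes out the Fox Jacobian and observes that $\det((\varphi\otimes\pi)(a)-I)$ and $\det((\varphi\otimes\pi)(b)-I)$ appear symmetrically and cancel) shows that $\tau^{\varphi\otimes\pi}(T)$ is a unit in $\ZHM$, hence so is $\tau^{\varphi\otimes\pi}(\partial M)$. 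Finally I would identify $\tau^{\varphi\otimes\pi}(M)$ with $\Delta^{\varphi\otimes\pi}_M$ and $\tau^{\varphi\otimes\pi}(M,\partial M)$ with $\Delta^{\varphi\otimes\pi}_{(M,\partial M)}$ up to units, which combined with the triviality of the boundary factor gives the lemma.

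The main obstacle will be making sense of the above identifications when the twisted chain complexes are not acyclic over $Q(\ZHM)$, for example when $\pi\otimes\varphi$ restricts trivially (or has nontrivial homology) on some boundary torus. In such cases one has to pass to Turaev's refined Reidemeister torsion, which is defined using compatible bases of the homology modules, and keep track of how these bases transform across the long exact sequence of the pair. Milnor's duality theorem (Theorem~1' of \cite{Milnor_duality}) is essentially a packaging of precisely this bookkeeping: the vanishing of $\chi(\partial M)$, together with the self-dual pairing between absolute and relative chains in a 3-manifold, forces all the extra data to cancel, yielding the clean equality stated in the lemma.
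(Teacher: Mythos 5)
You take a genuinely different route from the paper. The paper gives no argument at all: it cites Milnor's Theorem~1' and states the lemma with a $\qed$. Milnor's Theorem~1' is a Poincar\'e--Lefschetz duality statement for Reidemeister torsion, proved via intersection pairings between absolute and relative chains in a compact manifold with boundary. Your approach is instead to run the short exact sequence of the pair through the multiplicativity of Reidemeister torsion and observe that the boundary tori contribute units; this is a valid and somewhat more elementary path that avoids duality, and one advantage is that it gives the equality directly without a conjugation to absorb. However, your closing paragraph misdescribes Milnor's theorem as ``a packaging of precisely this bookkeeping'' --- Milnor's argument is built from cap products, not from the long exact sequence of the pair --- so appealing to Milnor cannot by itself patch the gaps in the multiplicativity argument; you must complete that argument on its own terms.

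The gap that must be closed is in the translation from torsion back to Fitting invariants. For the relative complex, $H_0$ of the twisted pair $(M,\partial M)$ vanishes, so $\tau(M,\partial M)\doteq\Delta^{\varphi\otimes\pi}_{(M,\partial M)}$. For the absolute complex, Turaev's formula gives $\tau(M)\doteq\Delta^{\varphi\otimes\pi}_M / \Delta_0$, where $\Delta_0$ is the order of $H_0$ of the twisted absolute complex, and $\Delta_0$ is not automatically a unit (in the untwisted case with $b_1(M)=1$ it is $t-1$). So your chain of identifications
\[
\Delta^{\varphi\otimes\pi}_M \;\doteq\; \tau(M) \;\doteq\; \tau(M,\partial M)\cdot\tau(\partial M)\;\doteq\;\tau(M,\partial M)\;\doteq\;\Delta^{\varphi\otimes\pi}_{(M,\partial M)}
\]
breaks at the first link unless you show that $\Delta_0$ is a unit in the relevant cases, or track it through the long exact sequence and show it cancels a matching contribution. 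You correctly anticipate that non-acyclicity of the boundary complex is a danger point; the $\Delta_0$ discrepancy is a separate concern and is not resolved merely by invoking Turaev's refined torsion.
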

\subsection{The taut polynomial}\label{subsec:taut}
Recall that $M$ denotes a connected 3-manifold equipped with a finite veering triangulation $\mathcal{V}$, and $H_M$ denotes the torsion-free part of $H_1(M;\zz)$. 

The \emph{taut polynomial} $\Theta_{\mathcal{V}}$ of $\mathcal{V}$ is defined in \cite[Section 3]{LMT}  by explicitly giving a free presentation 
\begin{equation}\label{eqn:taut_presentation}\ZHM^F \overset{D}{\longrightarrow} \ZHM^E \longrightarrow \mathcal{E}(\mathcal{V}) \longrightarrow 0\end{equation}
for the \emph{taut module} $\mathcal{E}(\mathcal{V})$. The definition of $D$ depends on the upper track of $\mathcal{V}$, as explained below. 

Let $\tau$ denote the upper track of $\mathcal{V}$. It induces a train track $\tau^{fab}$ in the 2-skeleton of the veering triangulation $\Vab$ of $\Mab$. Let $h\cdot f \in H_M\times F$ be a triangle of $\Vab$. Consider the restriction of $\tau^{fab}$ to $h\cdot f$; see Figure \ref{fig:switch_relation}. This train track determines a \emph{switch relation} between its three half-branches: the large half-branch is equal to the sum of the two small half-branches. By identifying the half-branches with the edges in the boundary of $h\cdot f$ which they meet, we obtain a switch relation between the \emph{edges} in the boundary of $h\cdot f$.  

\begin{figure}[h]
	\begin{center}
		\includegraphics[width=0.23\textwidth]{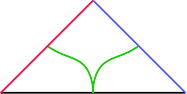} 
		\put(-62,-10){$h_0 \cdot e_0$}
		\put(-109,20){$h_1 \cdot e_1$}
		\put(-13,20){$h_2 \cdot e_2$}
	\end{center}
	\caption{The upper track in a triangle determines a switch relation \\ $h_0\cdot e_0 = h_1\cdot e_1 + h_2\cdot e_2$ between the edges in its boundary.}
	\label{fig:switch_relation}
\end{figure}

We rearrange the switch relation determined by $h\cdot f$ into a linear combination of edges from \mbox{$H_M\times E$}. This linear combination
is the value of $D$ at $h\cdot f$. For example, the image of the triangle presented in Figure \ref{fig:switch_relation} under $D$ is equal to
\[h_0\cdot e_0 - h_1\cdot e_1 - h_2 \cdot e_2 \in \ZHM^E.\]

The \emph{taut polynomial} $\Theta_{\mathcal{V}}$ is defined as the zeroth Fitting polynomial of $\mathcal{E}(\mathcal{V})$. In other words
\[\Theta_{\mathcal{V}} = \gcd \left\lbrace \text{maximal minors of } D \right\rbrace.\]

\subsection{Comparison of polynomials}
Recall from Subsection \ref{subsec:EO_homo} that a veering triangulation $\mathcal{V}$  of $M$ determines the edge-orientation homomorphism  $\omega: \pi_1(M) \rightarrow \lbrace -1, 1\rbrace$. By identifying $\lbrace -1, 1\rbrace$ with $\mathrm{GL}(1, \zz)$ this homomorphism  can be seen as a 1-dimensional representation  $\omega: \pi_1(M) \rightarrow \mathrm{GL}(1, \zz)$. First we show that the taut polynomial of $\mathcal{V}$ is equal to the Alexander polynomial of $M$ twisted by the representation $\omega \otimes \pi$. In the proof we abuse the notation and denote by $\omega$ also the  ring homomorphism
\begin{gather*}\omega: \zz\lbrack \pi_1(M) \rbrack \rightarrow \zz\lbrack \pi_1(M) \rbrack\\
	\omega\left(\sum\limits_{\gamma \in \pi_1(M)} a_\gamma\cdot \gamma \right) = \sum\limits_{\gamma \in \pi_1(M)}\omega(\gamma)\cdot a_\gamma\cdot \gamma.\end{gather*}

\begin{theorem}\label{thm:taut=twisted}
Let $\mathcal{V}$ be a finite veering triangulation of a connected 3-manifold~$M$. 
Then
\[\Theta_{\mathcal{V}} = \Delta_M^{\omega \otimes \pi}\]
where $\omega: \pi_1(M) \rightarrow \lbrace -1, 1\rbrace$ is the edge-orientation homomorphism of $\mathcal{V}$ and \linebreak $\pi:\pi_1(M) \rightarrow H_M$ is the natural projection.
\end{theorem}
\begin{proof}
Observe that the presentation \eqref{pres:Alex} for $A(M, \partial M)$ resembles the presentation \eqref{eqn:taut_presentation} for the $\mathcal{E}(\mathcal{V})$. The only difference is that the presentation matrix $D$ of the taut module does not depend on the orientations of  the triangles and edges of $\Vab$; it only depends on the transverse taut structure on the triangulation.

Given a triangle $f$ of $\mathcal{V}$ we set
\begin{equation}\label{notation:epsilon}\epsilon(f) = \begin{cases}
		1 &\text{if $f$ appears on the left side of its upper large edge}\\
		-1 & \text{otherwise}.
\end{cases}\end{equation}
We work in the universal cover $\widetilde{M}$ of $M$. Let $\widetilde{\mathcal{V}}$ be the veering triangulation of $\widetilde{M}$ induced by $\mathcal{V}$.  Consider the following diagram
\begin{equation}\label{diagram:universal}\begin{tikzcd}
		\zz\lbrack \pi_1(M) \rbrack^F \arrow[r,"\widetilde{\partial}"] \arrow[d, "f \mapsto \epsilon(f)\cdot f"]& \zz\lbrack \pi_1(M) \rbrack^E \arrow[r]\arrow[d, "\omega^{\oplus E}"]& \widetilde{A}(M, \partial M) \arrow[r] \arrow[d]& 0 \\
		\zz\lbrack \pi_1(M) \rbrack^F \arrow[r, "\widetilde{D}"] & \zz\lbrack \pi_1(M) \rbrack^E  \arrow[r]& \widetilde{\mathcal{E}}(\mathcal{V}) \arrow[r]& 0 
\end{tikzcd}\end{equation}
The homomorphisms $\widetilde{\partial}$, $\widetilde{D}$ are defined analogously to $\partial$, $D$, respectively, just in the universal cover of $M$ instead of the maximal free abelian cover of $M$.
The modules $\widetilde{A}(M, \partial M)$, $\widetilde{\mathcal{E}}(\mathcal{V})$ are trivial, but the point here is to compare $\widetilde{\partial}$ with~$\widetilde{D}$.

If $\mathcal{V}$ is edge-orientable choose an orientation on its edges so that it is edge-oriented. We orient ideal simplices of $\widetilde{\mathcal{V}}$ so that the covering map $\widetilde{M} \rightarrow M$ restricted to any ideal simplex is orientation-preserving. Therefore clearly when $\mathcal{V}$ is edge-oriented then for any $\gamma\cdot f \in \pi_1(M) \times F$ we have $\widetilde{\partial}(\gamma\cdot f) = \epsilon(f)\cdot \widetilde{D}(\gamma\cdot f)$.  More generally, the diagram commutes because reversing the orientation of  an edge $\gamma \cdot e$ of $\widetilde{\mathcal{V}}$ if and only if \[\omega(\gamma)=-1\] makes $\widetilde{\mathcal{V}}$ into an edge-oriented triangulation. 

Let us denote by $(\omega \otimes \pi)(\widetilde{\partial})$, $\pi(\widetilde{D})$ the matrices obtained from $\widetilde{\partial}$, $\widetilde{D}$ by mapping its entries through $\omega\otimes \pi$, $\pi$ (extended by linearity to $\zz \lbrack\pi_1(M) \rbrack \rightarrow \ZHM$), respectively.
By definition, $\Delta_{(M, \partial M)}^{\omega \otimes \pi}$ is equal to the greatest common divisor of the maximal minors of $(\omega \otimes \pi)(\widetilde{\partial})$. The above diagram implies that $(\omega \otimes \pi)(\widetilde{\partial})$ differs from $\pi(\widetilde{D}) = D$ only by multiplying some of its columns by -1.  Thus $\Delta_{(M, \partial M)}^{\omega \otimes \pi} = \Theta_{\mathcal{V}}$.
%\begin{align*}\Delta_{(M, \partial M)}^{\omega \otimes \pi} =& \gcd\lbrace{\text{maximal minors of } (\omega \otimes \pi)(\widetilde{\partial})}\rbrace =\\
%	=& \gcd\lbrace{\text{maximal minors of }  \pi(\widetilde{D})}\rbrace = \Theta_{\mathcal{V}}.\end{align*}
The claim now follows from Lemma~\ref{lem:relative=Alex}.
\end{proof}
Theorem \ref{thm:taut=twisted} can be used to derive various properties of the taut polynomial relying on known results about twisted Alexander polynomials. For instance, we can deduce that it is \emph{symmetric}.  
\begin{definition}
Given $P =\sum\limits_{h \in H_M} a_h \cdot h \in \ZHM$  we define $\overline{P}= \sum\limits_{h \in H_M} a_h \cdot h^{-1}$, and say that $P$ is \emph{symmetric} if there is a unit $u \in \ZHM$ such that $P = u \overline{P}$.
\end{definition}
%\begin{definition}
%An element $P  = \sum\limits_{h \in H_M} a_h \cdot h \in \ZHM$ is \emph{symmetric} if  there is a unit $u \in \ZHM$ such that  $P = u\cdot \sum\limits_{h \in H_M} a_h \cdot h^{-1}$.\end{definition}
Symmetric polynomials are also called \emph{palindromic} or \emph{reciprocal}.
Several classical polynomial invariants of 3-manifolds are symmetric, including the Alexander polynomial \cite[Corollary 4.5]{Turaev_Alex} and the Teichm\"uller polynomial \cite[Corollary~4.3]{McMullen_Teich}. Not all twisted Alexander polynomials are symmetric \cite{HillmanSilverWilliams}. However, it is known that if $\varphi \otimes \pi$ satisfies
\begin{equation}\label{eqn:unitary}
(\varphi \otimes \pi)(\gamma) = \overline{\left(((\varphi \otimes \pi)(\gamma))^{-1}\right)^{\mathrm{tr}}}\end{equation}
for any $\gamma \in \pi_1(M)$ 
then $\Delta_M^{\varphi\otimes \pi}$ is symmetric \cite[Proposition 4 in Section 3.3.4]{FriedlVidussi_survey}. %Since $\omega\otimes \pi$ is 1-dimensional, equation \eqref{eqn:unitary} is obviously satisfied. 
\begin{corollary}\label{cor:symmetry}
	The taut polynomial of a finite veering triangulation is symmetric. 
\end{corollary}
\begin{proof}
	By Theorem \ref{thm:taut=twisted}, $\Theta_{\mathcal{V}} = \Delta_M^{\omega\otimes \pi}$. Since $\omega\otimes \pi$ is a 1-dimensional representation,  condition \eqref{eqn:unitary} is obviously satisfied. Thus, by \cite[Proposition 4]{FriedlVidussi_survey}, $\Theta_\V$ is symmetric.
	\end{proof}

Another consequence of Theorem \ref{thm:taut=twisted} is that for any 3-manifold $M$ there are only finitely many elements in $\bigslant{\ZHM}{\pm H_M}$ that can be the taut polynomial of some veering triangulation of $M$.
\begin{corollary}\label{cor:finitely:many}
Let $M$ be a 3-manifold. Up to a unit there are only finitely many elements in $\ZHM$ that can be the taut polynomial of a veering triangulation of $M$. In particular, if there exists a 3-manifold with infinitely many veering triangulations then infinitely many of them have the same taut polynomial.
\end{corollary}
\begin{proof}
	By Theorem \ref{thm:taut=twisted}, the taut polynomial of any veering triangulation of $M$ is equal to $\Delta_M^{\omega\otimes \pi}$, where $\omega:\pi_1(M) \rightarrow \lbrace -1, 1\rbrace$. The claim follows because there are only $|H_1(M;\zz/2)|$ different homomorphisms from $\pi_1(M)$ to $\zz/2$.
	\end{proof}

When $\Vab$ is edge-orientable then the edge-orientation homomorphism $\omega$ factors through \mbox{$\sigma: H_M \rightarrow \lbrace -1, 1\rbrace$}. In this case  Theorem \ref{thm:taut=twisted} gives a relation between the taut polynomial of $\mathcal{V}$ and the untwisted Alexander polynomial of~$M$. 

\begin{corollary}\label{cor:Alex=taut}
	Let $\mathcal{V}$ be a veering triangulation of a 3-manifold~$M$. Set $r= b_1(M)$ and let $(h_1, \ldots, h_r)$ be any basis of $H_M$. If $\Vab$ is edge-orientable then
	\[\Theta_{\mathcal{V}}(h_1, \ldots, h_r) = \Delta_M(\sigma(h_1)\cdot h_1, \ldots, \sigma(h_r)\cdot h_r),\]
	where $\sigma: H_M \rightarrow \lbrace -1, 1\rbrace$ is the factor of the edge-orientation homomorphism of $\mathcal{V}$.
\end{corollary}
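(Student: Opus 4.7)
The plan is to combine Proposition \ref{prop:taut=twisted} with the fact that when $\Vab$ is edge-orientable, the edge-orientation homomorphism $\omega$ factors through $H_M$, so the twisting by $\omega \otimes \pi$ reduces to a ring automorphism of $\ZHM$.

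First, by Proposition \ref{prop:taut=twisted} we have $\Theta_{\mathcal{V}} = \Delta_M^{\omega \otimes \pi}$, so it suffices to show that the twisted Alexander polynomial $\Delta_M^{\omega \otimes \pi}$ is obtained from $\Delta_M$ by the substitution $h_j \mapsto \sigma(h_j) h_j$. To this end I would invoke Lemma \ref{lem:edge-orient-factors} with $H = H_M$: since $\Vab$ is edge-orientable, $\omega$ factors as $\omega = \sigma \circ \pi$ for some homomorphism $\sigma: H_M \to \{-1,1\}$. Hence the tensor representation $\omega \otimes \pi: \pi_1(M) \to \mathrm{GL}(1, \ZHM) = \ZHM^\ast$ is simply $\gamma \mapsto \sigma(\pi(\gamma))\cdot \pi(\gamma)$, that is, it is a $1$-dimensional representation factoring through $H_M$ via the map $h \mapsto \sigma(h)\cdot h$.

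Next, consider the ring endomorphism
\[\tau: \ZHM \to \ZHM, \qquad \tau\left(\sum_{h\in H_M} a_h \cdot h\right) = \sum_{h \in H_M} a_h\cdot \sigma(h)\cdot h.\]
Because $\sigma$ is a group homomorphism to $\{-1, 1\}$, one checks $\tau(h_1 h_2) = \sigma(h_1 h_2)h_1 h_2 = \tau(h_1)\tau(h_2)$, so $\tau$ is in fact a ring automorphism (its inverse being itself). The presentation matrix for $A^{\omega \otimes \pi}(M, \partial M)$ obtained by pushing the universal cover boundary map $\widetilde{\partial}$ through $\omega \otimes \pi$ is precisely the image under $\tau$ of the presentation matrix for $A(M, \partial M)$ obtained by pushing $\widetilde{\partial}$ through $\pi$. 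Since $\tau$ is a ring automorphism, it sends greatest common divisors to greatest common divisors of the images, so the zeroth Fitting invariants satisfy
\[\Delta_M^{\omega \otimes \pi} = \tau(\Delta_M) = \Delta_M\bigl(\sigma(h_1)\cdot h_1, \ldots, \sigma(h_r)\cdot h_r\bigr),\]
where for the second equality I use Lemma \ref{lem:relative=Alex} to replace the relative polynomial by the absolute one on each side (identifying $\Delta^{\omega\otimes \pi}_{(M,\partial M)}$ with $\Delta_M^{\omega\otimes\pi}$).

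The two steps together give the claim. The only mildly subtle point, which I would state carefully rather than as a routine calculation, is the verification that $\tau$ really is a ring automorphism (this uses multiplicativity of $\sigma$) and that applying a ring automorphism to a presentation matrix multiplies the associated zeroth Fitting invariant by that automorphism; beyond that, the corollary is a direct specialisation of Proposition \ref{prop:taut=twisted}.
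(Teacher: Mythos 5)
Your proof is correct and follows essentially the same route as the paper: invoke Proposition~\ref{prop:taut=twisted}, factor $\omega = \sigma\circ\pi$ via Lemma~\ref{lem:edge-orient-factors}, and then observe that twisting by $\omega\otimes\pi$ has the effect of a variable sign change on the presentation matrix. The only place you add detail is in making explicit that the sign-change map $\tau$ is a ring automorphism of $\ZHM$ (its own inverse), which is why it carries the gcd of the minors to the gcd of the transformed minors; the paper leaves this implicit with the phrase ``differs from $\Delta_M$ just by sign changes in variables''. This is a useful clarification, and it fits well with the paper's Remark~\ref{remark:isomorphism:of:modules}, which warns that $\tau$ is not a $\ZHM$-\emph{module} homomorphism even though, as you correctly check, it is multiplicative and hence a ring automorphism --- the two facts are not in tension and both are exactly what is needed here.
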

\begin{proof}
When $\omega$ factors through $\sigma$, the tensor representation $\omega \otimes \pi$ is given by \[\gamma \mapsto \sigma(\pi(\gamma)) \cdot \pi(\gamma).\] Hence the presentation matrix for the twisted module $A^{\omega \otimes \pi}(M, \partial M)$ differs from the presentation matrix for the untwisted module $A(M, \partial M)$ by switching the signs of variables with a nontrivial image under $\sigma$. We obtain that $\Delta_M^{\omega \otimes \pi}$ differs from $\Delta_M$ just by sign changes in variables. The former is equal to $\Theta_{\mathcal{V}}$ by Theorem \ref{thm:taut=twisted}.
\end{proof}

\begin{remark}\label{remark:isomorphism:of:modules}
The proof of Theorem \ref{thm:taut=twisted} yields an isomorphism $A(M, \partial M) \cong \mathcal{E}(\mathcal{V})$  in the case when $\mathcal{V}$ is edge-orientable. If $\mathcal{V}$ is not edge-orientable, then $\sigma: H_M \rightarrow \lbrace -1,1\rbrace$ is nontrivial and  the induced ring homomorphism $\sigma: \ZHM \rightarrow \ZHM$ given by $a_h\cdot h \mapsto \sigma(h)\cdot a_h \cdot h$ is not a $\ZHM$-module homomorphism. For $h\in H_M$ such that  $\sigma(h) = -1$ we have
\[h\cdot\sigma(h) = -h^2,\]
but
\[\sigma(h\cdot h) = h^2.\] 
\end{remark}

Corollary \ref{cor:Alex=taut} allows us to easily identify veering triangulations for which the equality $\Theta_{\mathcal{V}}(h_1, \ldots, h_r) = \Delta_M(\pm h_1,  \ldots, \pm h_r)$ might fail.

\begin{corollary}\label{cor:even_torsion}
	Let $\mathcal{V}$ be a veering triangulation of a 3-manifold $M$. Set $r = b_1(M)$.  If  \[\Theta_{\mathcal{V}}(h_1, \ldots, h_r) \neq \Delta_M(\pm h_1,  \ldots, \pm h_r)\] in $\bigslant{\ZHM}{\pm H_M}$ then the torsion subgroup of $H_1(M;\zz)$  has even order. %\qed
\end{corollary}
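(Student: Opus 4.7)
The plan is to prove the contrapositive: assuming the torsion subgroup $T$ of $H_1(M;\zz)$ has odd order, I would derive the equality $\Theta_{\mathcal{V}}(h_1, \ldots, h_r) = \Delta_M(\pm h_1, \ldots, \pm h_r)$ in $\bigslant{\ZHM}{\pm H_M}$.

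The first step is to reduce the problem to edge-orientability of $\mathcal{V}^{fab}$. Indeed, Corollary \ref{cor:Alex=taut} already gives exactly the conclusion we want whenever $\mathcal{V}^{fab}$ is edge-orientable: taking $\epsilon_j = \sigma(h_j) \in \lbrace -1,1\rbrace$ produces a specific choice of signs realising the required equality. So it suffices to show that $|T|$ odd forces $\mathcal{V}^{fab}$ to be edge-orientable.

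The second step is to translate edge-orientability of $\mathcal{V}^{fab}$ into a statement about the edge-orientation homomorphism via Lemma \ref{lem:edge-orient-factors}: $\mathcal{V}^{fab}$ is edge-orientable if and only if $\omega:\pi_1(M) \to \lbrace -1,1\rbrace$ factors through $H_M$. Since the target is abelian, $\omega$ automatically factors through $H_1(M;\zz) = \pi_1(M)^{\mathrm{ab}}$, and it factors further through the quotient $H_M = H_1(M;\zz)/T$ if and only if the induced map $\omega: H_1(M;\zz) \to \lbrace -1,1\rbrace$ vanishes on $T$.

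The final step is a one-line group-theoretic argument. If $|T|$ is odd then for every $t \in T$ the order of $t$ is odd, and hence the order of $\omega(t) \in \lbrace -1, 1\rbrace$ divides an odd number. The only element of $\lbrace -1, 1\rbrace$ of odd order is $1$, so $\omega(t)=1$ and $\omega|_T$ is trivial. Combining the three steps gives the contrapositive. There is no real obstacle here; the result is a clean consequence of Corollary \ref{cor:Alex=taut}, Lemma \ref{lem:edge-orient-factors}, and the fact that a homomorphism from a group of odd order to a group of order $2$ must be trivial.
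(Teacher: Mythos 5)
Your proof is correct and takes essentially the same approach as the paper: both pass through Corollary \ref{cor:Alex=taut} to reduce to edge-orientability of $\Vab$, then use Lemma \ref{lem:edge-orient-factors} together with the observation that a homomorphism from a finite group of odd order to $\{-1,1\}$ must be trivial. The only difference is that you argue by contrapositive and spell out the factorisation through $H_1(M;\zz)$ explicitly, whereas the paper states the same fact compactly by noting that the deck group $\zz/2$ of $\Vor \to \mathcal{V}$ must be a quotient of the torsion subgroup.
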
 
\begin{proof}
	By Corollary \ref{cor:Alex=taut} if 
	\[\Theta_{\mathcal{V}}(h_1, \ldots, h_r) \neq \Delta_M(\pm h_1,  \ldots, \pm h_r)\]
	then $\Vab$ is not edge-orientable. If follows that the deck group of $\Vor \rightarrow \mathcal{V}$ is a quotient of the torsion subgroup of $H_1(M;\zz)$.
\end{proof}

Now suppose that $\Vab$ is not edge-orientable. Denote by $\hVab$ its edge-orientation double cover. It is a (not necessarily maximal) free abelian cover of $\Vor$ with the deck group isomorphic to $H_M$. %The deck group $\hat{H}_M$ of the covering $\hVab \rightarrow \mathcal{V}$ is isomorphic to $H_M \oplus \zz/2$.

\begin{equation*}\label{diagram:covers}\begin{tikzcd}[row sep = 0.1cm]
	&	& & \Vab \arrow[rrd, "H_M"] &&\\
(\Vor)^{fab} \arrow[r]&\hVab \arrow[rru, "\zz/2"] \arrow[rrd, "H_M", swap]&&& & \mathcal{V}\\
& & & \Vor \arrow[rru, "\zz/2", swap]&&\\
\end{tikzcd}\end{equation*}

Let $\Mor$ denote the manifold underlying $\Vor$. Set $H^{or} = \bigslant{H_1(\Mor;\zz)}{\text{torsion}}$. Since $H^{or}$ surjects onto $H_M$, the ring $\ZHM$ can be regarded as a $\zz\lbrack H^{or} \rbrack$-module. Using this we define the $\ZHM$-module
%\[\hat{A}(M, \partial M) = A(\Mor, \partial \Mor)\otimes_{\zz\lbrack H^{or} \rbrack} \ZHM.\]
\[\hat{A}(\V) = A(\Mor, \partial \Mor)\otimes_{\zz\lbrack H^{or} \rbrack} \ZHM.\]

The presentation matrix $\hat{\partial}$ for $\hat{A}(\V)$ can be  obtained from the presentation matrix for $A(\Mor, \partial \Mor)$ by mapping its entries through the epimorphism  $\zz\lbrack H^{or} \rbrack \rightarrow \zz \lbrack  H_M \rbrack$.
As in Section~\ref{sec:EO_cover}, we denote two lifs of an ideal simplex $x$ of $\mathcal{V}$ to $\Vor$ by $x$ and $\overline{x}$. 
Therefore a free presentation for $\hat{A}(\V)$ can be written as
\[\ZHM^F \oplus \ZHM^{\overline F} \xrightarrow{\hat{\partial}} \ZHM^E \oplus \ZHM^{\overline E} \longrightarrow \hat{A}(\V) \longrightarrow 0.\]
 We denote the zeroth Fitting invariant of $\hat{A}(\V)$ by $\hat{\Delta}_{\V}$.
\begin{remark}\label{remark:NEOtwisted}
The polynomial	$\hat{\Delta}_{\V}$ can be interpreted as a twisted Alexander polynomial of $M$. This time we consider the 2-dimensional representation obtained by post-composing the edge-orientation homomorphism $\omega$ of $\mathcal{V}$ with the regular representation of the cyclic group of order 2. That is,
\begin{gather*}
	\rho_\omega: \pi_1(M) \rightarrow \mathrm{GL}(2, \zz)\\
	\gamma\mapsto \begin{cases}
		\begin{bmatrix} 1&0 \\0&1 \end{bmatrix} &\text{if } \omega(\gamma) = 1\\
		\hfill\\
		\begin{bmatrix} 0&1 \\1&0 \end{bmatrix} &\text{if } \omega(\gamma) = -1.\\
	\end{cases}
\end{gather*}
The tensor representation $\rho_\omega \otimes \pi$, where $\pi: \pi_1(M)\rightarrow H_M$,  determines the twisted Alexander polynomial $\Delta_M^{\rho_\omega\oplus \pi} \in \ZHM$. The fact that it is equal to $\hat{\Delta}_{\V}$ follows from \cite[Theorem 3]{FriedlVidussi_survey} and Lemma \ref{lem:relative=Alex}.
\end{remark}
Since $\Vor$ is edge-orientable, Remark \ref{remark:isomorphism:of:modules} implies that the $\zz\lbrack H^{or} \rbrack$-modules $\mathcal{E}(\Vor)$ and $A(\Mor, \partial \Mor)$ are isomorphic. Thus we also have \[\hat{A}(\V) \cong \mathcal{E}(\Vor) \otimes_{\zz\lbrack H^{or} \rbrack} \ZHM.\] Therefore a natural strategy to find a formula relating $\Theta_{\mathcal{V}}$ and $\Delta_M$ in the case when~$\Vab$ is not edge-orientable is to relate them both to  $\hat{\Delta}_\V$. 

Let $X \in \lbrace T,F,E \rbrace$. A chain  $c \in \ZHM^X \oplus \ZHM^{\overline X}$ can be written as
\[c = \sum\limits_{x \in X} \left(	\sum\limits_{h \in H_M} a_h \cdot h \cdot x + \sum\limits_{h \in H_M} b_h \cdot h  \cdot \overline{x}\right),\]
where $a_h$, $b_h \in \zz$. We denote by $\overline{c}$ the chain
\[\overline{c} = \sum\limits_{x \in X}\left(\sum\limits_{h \in H_M} a_h \cdot h\cdot \overline{x} + \sum\limits_{h \in H_M} b_h \cdot h \cdot x\right).\]
Using this notation we find the relation between the images under $\hat{\partial}$ of the two lifts of a triangle $h\cdot f$ of $\Vab$ to $\hVab$.
\begin{lemma}\label{lem:two:lifts:images}
	Let $f$ be a triangle of $\mathcal{V}$. Then
	\[\hat{\partial}(1 \cdot \overline{f}) = - \overline{\hat{\partial}(1\cdot f)}.\]
\end{lemma}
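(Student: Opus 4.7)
The plan is to exploit the nontrivial deck transformation $\hat{\iota}$ of the double cover $\hVab \to \Vab$ and translate its action into the cellular chain complex. Since we assume $\Vab$ is not edge-orientable, Lemma \ref{lem:edge-orient-factors} tells us that the restriction of $\omega$ to $\pi_1(M^{fab})$ is nontrivial. A short computation then shows that the map $\pi_1(M)/(\pi_1(M^{fab})\cap \ker\omega) \to H_M \times \zz/2$ defined by $[\gamma]\mapsto (\pi(\gamma),\omega(\gamma))$ is an isomorphism, so the full deck group of $\hVab \to \mathcal{V}$ is the direct product $H_M \times \zz/2$. Consequently $\hat{\iota}$ commutes with the $H_M$-action (deck structure of $\hVab \to \Vor$), and with a suitable choice of labelling of lifts we may arrange $\hat{\iota}(h\cdot x) = h\cdot \overline{x}$ for every $h \in H_M$ and every simplex $x$ of $\mathcal{V}$.

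Next I record how $\hat{\iota}_\ast$ acts on cellular chains. For faces, the orientation is determined by the coorientation and the right-hand rule; since $\hat{\iota}$ preserves the transverse taut structure and is orientation-preserving on the underlying $3$-manifold, face orientations are preserved, giving $\hat{\iota}_\ast(h\cdot f) = h\cdot \overline{f}$ as $2$-chains. For edges, I use the convention from the proof of Proposition \ref{prop:taut=twisted} and orient every $1$-cell of $\hVab$ by the edge-orientation of the edge-orientable triangulation $\Vor$. By Construction \ref{EOconstruction}, the edge-orientations of $e$ and $\overline{e}$ are opposite, so $\hat{\iota}_\ast(h\cdot e) = -h\cdot \overline{e}$ as $1$-chains.

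Because $\hat{\iota}$ is a simplicial automorphism of $\hVab$, the induced map $\hat{\iota}_\ast$ on cellular chains commutes with $\hat{\partial}$. Writing $\hat{\partial}(1\cdot f)= \sum_i \alpha_i h_i\cdot e_i + \sum_j \beta_j h_j \cdot \overline{e_j}$ with $\alpha_i,\beta_j \in \zz$ and using the formulas above for the action of $\hat{\iota}_\ast$ on $1$-chains, I compute
\[
\hat{\partial}(1\cdot \overline{f}) = \hat{\partial}(\hat{\iota}_\ast(1\cdot f)) = \hat{\iota}_\ast(\hat{\partial}(1\cdot f)) = -\sum_i \alpha_i h_i\cdot \overline{e_i} - \sum_j \beta_j h_j\cdot e_j = -\overline{\hat{\partial}(1\cdot f)},
\]
which is the claimed identity.

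The main obstacle is the bookkeeping in the first two paragraphs: verifying that the $H_M$ and $\zz/2$ deck actions genuinely commute (which relies on the non-edge-orientability of $\Vab$), choosing labels of lifts compatibly with this product structure, and checking the sign discrepancy between $2$-cells (no sign) and $1$-cells (sign flip) under $\hat{\iota}_\ast$. Once these are in place, the chain-map property of a simplicial automorphism immediately delivers the lemma.
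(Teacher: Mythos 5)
Your proof is correct and rests on exactly the same two facts the paper uses — that in Construction \ref{EOconstruction} the boundary of $1\cdot\overline f$ consists of the barred partners of the boundary of $1\cdot f$ with identical $H_M$-coefficients, and that $h\cdot e$ and $h\cdot\overline e$ carry opposite orientations while the $2$-cell orientations agree. The paper states these combinatorial facts directly and briefly, whereas you package them via the deck transformation $\hat\iota$ of $\hVab\to\Vab$ (first verifying that the deck group of $\hVab\to\mathcal V$ is $H_M\times\zz/2$ so $\hat\iota$ commutes with the $H_M$-action) and then invoke the chain-map property; this is a slightly longer but valid route to the same conclusion.
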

\begin{proof}
	It follows from  Construction~\ref{EOconstruction} that a triangle $1\cdot f$ of $\hVab$ has $h\cdot e$ (respectively $h\cdot \overline{e}$) in its boundary if and only if $1\cdot \overline{f}$ has $h\cdot \overline{e}$ (respectively $h \cdot e$) in its boundary. Moreover, $h \cdot e$ and $h\cdot \overline{e}$ have opposite orientations.
\end{proof}

In order to compare $\partial$ with $\hat{\partial}$ we define a pair of $\ZHM$-module homomorphisms
\begin{align*}
	P_F: & \ \ZHM^F \rightarrow \ZHM^F \oplus \ZHM^{\overline{F}} & P_E:& \ \ZHM^E \rightarrow \ZHM^E \oplus \ZHM^{\overline{E}}\\
	&1\cdot f \mapsto 1\cdot f + 1\cdot \overline{f}& &1\cdot e \mapsto 1\cdot e - 1\cdot \overline{e}.
\end{align*}
\begin{lemma}\label{lem:Alex:up:down}
Let $f$ be a triangle of $\mathcal{V}$. Then
\[P_E \circ \partial (1\cdot f) = \hat{\partial}\circ P_F (1\cdot f).\]
\end{lemma}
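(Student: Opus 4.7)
The plan is to unfold both sides of the claimed identity in terms of the boundary of a single triangle of $\mathcal{V}$, and then reduce the problem to a case check for each edge of $f$ based on which of the two lifts to $\hVab$ appears in the boundary of $1\cdot f$.

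First I would write $\partial(1\cdot f) = \sum_{i=0}^{2} \epsilon_i\, h_i\cdot e_i$ in $\ZHM^E$, where $e_0,e_1,e_2$ are the three edges of $\mathcal{V}$ in the boundary of $f$, each $h_i \in H_M$ records the deck translate in $\Vab$, and each $\epsilon_i \in \{\pm 1\}$ records the agreement between the chosen orientation of $h_i\cdot e_i$ and the orientation induced on $f$ via its coorientation and the right-hand rule. Applying $P_E$ gives
\[
P_E\circ\partial(1\cdot f) \;=\; \sum_{i=0}^{2} \epsilon_i\, h_i\cdot(e_i - \overline{e_i}).
\]
On the other side, $P_F(1\cdot f) = 1\cdot f + 1\cdot\overline{f}$, so by Lemma~\ref{lem:two:lifts:images}
\[
\hat{\partial}\circ P_F(1\cdot f) \;=\; \hat{\partial}(1\cdot f) - \overline{\hat{\partial}(1\cdot f)}.
\]
Thus the entire statement reduces to computing $\hat{\partial}(1\cdot f)$ and checking that after subtracting its barred version we recover the sum $\sum_i \epsilon_i\, h_i\cdot(e_i - \overline{e_i})$.

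Next, for each edge $e_i$ in the boundary of $f$, I would analyse the two possible lifts of $h_i \cdot e_i$ which may appear in $\partial(1\cdot f)$ inside $\hVab$: either $h_i\cdot e_i$ or $h_i\cdot\overline{e_i}$. By the last sentence of the proof of Lemma~\ref{lem:two:lifts:images}, these two lifts carry opposite orientations. Consequently, if the lift is $h_i\cdot e_i$ then it contributes with sign $\epsilon_i$ (the orientation matches the one used in $\Vab$), whereas if the lift is $h_i\cdot\overline{e_i}$ then its orientation is reversed and it contributes with sign $-\epsilon_i$. In the first case the contribution to $\hat{\partial}(1\cdot f) - \overline{\hat{\partial}(1\cdot f)}$ coming from that edge is
\[
\epsilon_i\, h_i\cdot e_i - \epsilon_i\, h_i\cdot \overline{e_i} \;=\; \epsilon_i\, h_i\cdot(e_i - \overline{e_i}),
\]
and in the second case it is
\[
-\epsilon_i\, h_i\cdot\overline{e_i} - (-\epsilon_i)\, h_i\cdot e_i \;=\; \epsilon_i\, h_i\cdot(e_i - \overline{e_i}).
\]
Either way the contribution from the $i$-th edge matches $\epsilon_i\, h_i\cdot(e_i - \overline{e_i})$, and summing over $i=0,1,2$ yields $P_E\circ \partial(1\cdot f)$.

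The only delicate point, and the place where I expect I might slip, is the bookkeeping of orientations: I need to be careful that Construction~\ref{EOconstruction} really does force $h\cdot e$ and $h\cdot\overline{e}$ to carry opposite orientations, and that the sign with which a lifted edge appears in $\hat\partial(1\cdot f)$ truly flips exactly when one switches between the two lifts. Once that is set up cleanly, the equality is a direct term-by-term comparison and does not use any veering-specific data beyond what has already been built into $P_E$, $P_F$, $\hat{\partial}$ and Lemma~\ref{lem:two:lifts:images}.
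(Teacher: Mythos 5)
Your proof takes the same route as the paper: apply Lemma~\ref{lem:two:lifts:images} to rewrite $\hat{\partial}\circ P_F(1\cdot f)$ as $\hat{\partial}(1\cdot f) - \overline{\hat{\partial}(1\cdot f)}$, then observe that $\hat{\partial}(1\cdot f)$ differs from $\partial(1\cdot f)$ precisely by barring some edges and flipping their signs, so the difference reproduces $P_E\circ\partial(1\cdot f)$. Your edge-by-edge case check just makes explicit what the paper states in one sentence, and the delicate orientation point you flag at the end is exactly the one the paper also leans on (via its claim that barring an edge flips its sign in $\hat{\partial}(1\cdot f)$), so the level of rigour matches.
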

\begin{proof}
	By Lemma \ref{lem:two:lifts:images}
	\[\hat{\partial}\circ P_F (1\cdot f) = \hat{\partial}(1\cdot f) - \overline{\hat{\partial}(1\cdot f)}.\]
	
	Let us view $\partial(1\cdot f) \in \ZHM^E$ as an element of $\ZHM^E \oplus \ZHM^{\overline{E}}$. When $\hVab$ is edge-oriented then $\hat{\partial} (1\cdot f)$ differs from $\partial(1\cdot f)$ by adding $\overline{\ \cdot \ }$ to some (possibly none) of the edges and switching the sign of these edges. Hence  $\hat{\partial}(1\cdot f) - \overline{\hat{\partial}(1\cdot f)} = \partial (1\cdot f) - \overline{\partial(1\cdot f)} = P_E \circ \partial (1\cdot f)$.

%		
%Suppose that $h\cdot e$ is embedded in the triangle $1\cdot f$ of $\Vab$ and appears with the sign~$\delta(e) \in \lbrace -1, 1\rbrace$ in $\partial (1\cdot f)$.  It follows from Construction~\ref{EOconstruction} that in $\hVab$ either 
%\begin{itemize}
%	\item $h\cdot e$ is embedded in $1\cdot f$ and appears with the sign $\delta(e)$ in $\hat{\partial} (1\cdot f)$, and $h\cdot \overline{e}$ is embedded in $1\cdot \overline{f}$ and appears with the sign $-\delta(e)$ in $\hat{\partial} (1\cdot \overline{f})$, or 
%	\item$h\cdot \overline{e}$ is embedded in $1\cdot f$ and appears with the sign $-\delta(e)$ in $\hat{\partial} (1\cdot f)$, and $h\cdot e$ is embedded in $1\cdot \overline{f}$ and appears with the sign $\delta(e)$ in $\hat{\partial} (1\cdot \overline{f})$. 
%	\end{itemize}
%	Hence $\hat{\partial}(1\cdot f) - \overline{\hat{\partial}(1\cdot f)} = \partial (1\cdot f) - \overline{\partial(1\cdot f)}$.
	%Hence $\hat{\partial}(1\cdot f) - \overline{\hat{\partial}(1\cdot f)} = \partial (1\cdot f) - \overline{\partial(1\cdot f)} = P_E \circ \partial (1\cdot f)$.
\end{proof}

In order to compare $D$ with $\hat{\partial}$ we define a pair of $\ZHM$-module homomorphisms
\begin{align*}
	Q_F: & \ \ZHM^F \rightarrow \ZHM^F \oplus \ZHM^{\overline{F}} & Q_E:& \ \ZHM^E \rightarrow \ZHM^E \oplus \ZHM^{\overline{E}}\\
	&1\cdot f \mapsto \epsilon(f)(1\cdot f - 1\cdot \overline{f})& &1\cdot e \mapsto 1\cdot e + 1\cdot \overline{e}.
\end{align*}
The meaning of $\epsilon$ is as in \eqref{notation:epsilon}.
\begin{lemma}\label{lem:veer:up:down}
	Let $f$ be a triangle of $\mathcal{V}$. Then
	\[Q_E \circ D (1\cdot f) = \hat{\partial}\circ Q_F (1\cdot f).\]
\end{lemma}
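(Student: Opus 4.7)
The plan is to mirror the structure of the proof of Lemma~\ref{lem:Alex:up:down}, but comparing $\hat{\partial}$ with the taut boundary $D$ instead of with the standard boundary $\partial$. The key observations will be that (i) Lemma~\ref{lem:two:lifts:images} rearranges the right-hand side into a symmetric sum, and (ii) by Lemma~\ref{lem:Vor_is_EO} the cover $\hVab$ is itself edge-orientable, so the relation between $\hat{\partial}$ and the lifted taut boundary found in the proof of Proposition~\ref{prop:taut=twisted} is available to us here.

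First I would unfold the right-hand side. Using the definition of $Q_F$ and then Lemma~\ref{lem:two:lifts:images}, one gets
\[
\hat{\partial}\circ Q_F(1\cdot f)
=\epsilon(f)\bigl(\hat{\partial}(1\cdot f)-\hat{\partial}(1\cdot\overline{f})\bigr)
=\epsilon(f)\bigl(\hat{\partial}(1\cdot f)+\overline{\hat{\partial}(1\cdot f)}\bigr).
\]
Next, apply the reasoning of diagram \eqref{diagram:universal} in the proof of Proposition~\ref{prop:taut=twisted}, but to the edge-orientable triangulation $\hVab$: choosing the edge-orientation of $\hVab$ so that every face is edge-oriented yields $\hat{\partial}(1\cdot f)=\epsilon(f)\,\hat D(1\cdot f)$, where $\hat D$ denotes the taut boundary of $\hVab$ (i.e.\ the switch-relation map for the upper track of $\hVab$, with values in $\ZHM^E\oplus\ZHM^{\overline E}$). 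One must check that the sign $\epsilon$ is the same on the two lifts $1\cdot f$ and $1\cdot\overline{f}$; this is immediate from Construction~\ref{EOconstruction}, since both lifts inherit their coorientations and orientations from the covering map, so each lift sits on the same side of its upper large edge as $f$ does in $\mathcal{V}$. Substituting gives
\[
\hat{\partial}\circ Q_F(1\cdot f)=\epsilon(f)^2\bigl(\hat D(1\cdot f)+\overline{\hat D(1\cdot f)}\bigr)=\hat D(1\cdot f)+\overline{\hat D(1\cdot f)}.
\]

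Finally, I would compare this expression with $Q_E\circ D(1\cdot f)$. If the switch relation in $f$ reads $h_0\cdot e_0 - h_1\cdot e_1 - h_2\cdot e_2$ (with $e_0$ the upper large edge), then in $\hVab$ the triangle $1\cdot f$ has in its boundary specific lifts $h_i\cdot e_i^{(*_i)}$ of the edges $h_i\cdot e_i$, so
\[
\hat D(1\cdot f)=h_0\cdot e_0^{(*_0)}-h_1\cdot e_1^{(*_1)}-h_2\cdot e_2^{(*_2)}.
\]
Regardless of whether each $(*_i)$ is ``plain'' or ``overlined'', adding $\overline{\hat D(1\cdot f)}$ produces the symmetric combination
\[
\hat D(1\cdot f)+\overline{\hat D(1\cdot f)}=h_0(e_0+\overline{e_0})-h_1(e_1+\overline{e_1})-h_2(e_2+\overline{e_2}),
\]
which by definition of $Q_E$ equals $Q_E\circ D(1\cdot f)$. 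Chaining the three equalities completes the proof.

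The only real obstacle is bookkeeping: one must justify both that $\epsilon(\overline f)=\epsilon(f)$ and that the switch-relation coefficients of $\hat D(1\cdot f)$ in the cover agree with those of $D(1\cdot f)$ downstairs. Both facts are forced by Construction~\ref{EOconstruction}, which builds $\Vor$ out of copies of the tetrahedra of $\mathcal{V}$ equipped with the \emph{same} transverse taut structure and upper-track combinatorics; hence the switch relation lifts tautologically. Once these bookkeeping points are settled, the computation above goes through verbatim.
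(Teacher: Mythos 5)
Your main computation is correct and mirrors the paper's proof: start from Lemma~\ref{lem:two:lifts:images}, rewrite $\hat{\partial}\circ Q_F(1\cdot f)$ as $\epsilon(f)\bigl(\hat{\partial}(1\cdot f)+\overline{\hat{\partial}(1\cdot f)}\bigr)$, use edge-orientability of $\hVab$ to identify $\epsilon(f)\,\hat{\partial}(1\cdot f)$ with the switch-relation lift of $D(1\cdot f)$, and observe that the symmetrisation $\hat{D}(1\cdot f)+\overline{\hat{D}(1\cdot f)}$ washes out the bar ambiguity, giving $Q_E\circ D(1\cdot f)$. This is exactly what the paper does, only you give the lift of $D$ an explicit name $\hat{D}$ and cite diagram~\eqref{diagram:universal} for the relation $\hat{\partial}(1\cdot f)=\epsilon(f)\,\hat D(1\cdot f)$, which is a reasonable elaboration of the paper's terser phrasing.

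However, the parenthetical ``bookkeeping'' claim that $\epsilon(1\cdot\overline f)=\epsilon(1\cdot f)$ is false, and the justification you give for it contradicts Construction~\ref{EOconstruction}. The two lifts $t,\overline{t}$ of a tetrahedron carry \emph{opposite} edge-orientations by construction (only the coorientations, and hence the 2-cell orientations of faces, are the same); flipping the orientation of the upper large edge flips the ``left/right'' side that $f$ lies on, so in fact $\epsilon(1\cdot\overline f)=-\epsilon(1\cdot f)$. Fortunately your displayed derivation never uses the false equality: you correctly obtain $\overline{\hat{\partial}(1\cdot f)}=\epsilon(f)\,\overline{\hat{D}(1\cdot f)}$ by applying the bar operator to the relation at $1\cdot f$, rather than writing down a separate relation at $1\cdot\overline f$. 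Had you actually substituted $\hat{\partial}(1\cdot\overline f)=\epsilon(f)\hat D(1\cdot\overline f)$, you would have landed on $\hat D(1\cdot f)-\overline{\hat D(1\cdot f)}$, the wrong sign. So the proof stands, but the aside should either be deleted or corrected to $\epsilon(1\cdot\overline f)=-\epsilon(1\cdot f)$ — which, combined with Lemma~\ref{lem:two:lifts:images}, is precisely what makes the two routes to $\overline{\hat D(1\cdot f)}$ consistent.
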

\begin{proof}
	By Lemma \ref{lem:two:lifts:images}
	\[\hat{\partial} \circ Q_F(1\cdot f) = \epsilon(f)\left(\hat{\partial}(1\cdot f) + \overline{\hat{\partial}(1\cdot f)}\right).\]
	Let us view $D(1\cdot f) \in \ZHM^E$ as an element of $\ZHM^E \oplus \ZHM^{\overline{E}}$. When $\hVab$ is edge-oriented then $\epsilon(f) \cdot \hat{\partial}(1\cdot f)$ differs from $D(1\cdot f)$ only by adding $\overline{\ \cdot \ }$ to some (possibly none) of its edges, and not by sign changes. Hence $\epsilon(f)\left(\hat{\partial}(1\cdot f) + \overline{\hat{\partial}(1\cdot f)}\right)$ is equal to
$D(1\cdot f) +\overline{D(1\cdot f)} = Q_E \circ D(1\cdot f)$.
\end{proof}

\begin{proposition}\label{prop:twisted_factors}
Let $\mathcal{V}$ be a finite veering triangulation of a 3-manifold~$M$. If $\Vab$ is not edge-orientable then
\[\hat{\Delta}_{\V} = \Delta_M \cdot \Theta_{\mathcal{V}}.\]
\end{proposition}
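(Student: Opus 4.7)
The plan is to exhibit $\hat\partial$ as equivalent, via $\ZHM$-unit row and column operations, to a block upper-triangular matrix built from $\partial$ and $D$, from which one extracts the $0$-th Fitting invariant. Using Lemma~\ref{lem:two:lifts:images}, write
\[
\hat\partial = \begin{pmatrix} A & -B \\ B & -A \end{pmatrix},
\]
where $A$, $B$ are $|E| \times |F|$ matrices over $\ZHM$ recording the coefficients of unbarred and barred edges, respectively, in $\hat\partial(1\cdot f)$ for $f \in F$. Combining Lemmas~\ref{lem:Alex:up:down} and~\ref{lem:veer:up:down} and comparing coefficients of $e$ and $\bar e$, one extracts the key identities
\[
A - B = \partial \quad \text{and} \quad A + B = \epsilon D,
\]
where $\epsilon$ is the diagonal $|F| \times |F|$ matrix with entries $\epsilon(f) \in \{\pm 1\}$ from \eqref{notation:epsilon}. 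In particular $2B = \epsilon D - \partial$, so the entries of $\partial$ and $\epsilon D$ agree modulo $2$ and the entries of $B$ lie in $\pm H_M \cup \{0\}$.

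Next, apply two rounds of elementary operations of unit determinant, each preserving the $0$-th Fitting ideal: add each column indexed by $\bar f$ to the column indexed by $f$; then add each row indexed by $e$ to the row indexed by $\bar e$. Using the identities above, the matrix $\hat\partial$ is transformed into the block upper-triangular presentation
\[
\begin{pmatrix} \partial & -B \\ 0 & -\epsilon D \end{pmatrix}.
\]

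Finally, compute the $\gcd$ of the maximal minors of this matrix. The \emph{block-diagonal} maximal minors (those using $|E|$ columns from each block) equal $\pm \det(\partial|_S)\,\det(\epsilon D|_T)$, so their $\gcd$ equals $\Delta_{(M,\partial M)} \cdot \Theta_{\mathcal{V}}$. The \emph{mixed} maximal minors, which involve columns of $-B$, can be expanded by multilinearity using $2B = \epsilon D - \partial$: each becomes a rational sum of products of minors of $\partial$ and $\epsilon D$, and the integrality of $B$ forces each such mixed minor to be divisible by $\Delta_{(M,\partial M)} \cdot \Theta_{\mathcal{V}}$. Hence the $\gcd$ of all maximal minors equals $\Delta_{(M,\partial M)} \cdot \Theta_{\mathcal{V}}$ up to units in $\ZHM$. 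Combined with Lemma~\ref{lem:relative=Alex}, which yields $\Delta_{(M,\partial M)} = \Delta_M$, this proves
\[
\hat\Delta_{(M,\partial M)} = \Delta_M \cdot \Theta_{\mathcal{V}}.
\]

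The main obstacle is verifying the divisibility of the mixed minors: the multilinear expansion a priori produces rational expressions with denominators that are powers of $2$, and one must check that these cancel integrally using the combinatorial structure of $B$ inherited from Construction~\ref{EOconstruction}.
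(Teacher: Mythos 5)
Your reduction is genuinely different from the paper's at the structural level. The paper's proof compares the presentation $\partial\oplus D$ with the presentation $\hat\partial$ through the injections $P_\bullet, Q_\bullet$ and asserts that the block-diagonal matrix $\begin{pmatrix}\partial & 0\\0 & D\end{pmatrix}$ is a presentation matrix for $\hat A(M,\partial M)$, so there are no off-diagonal terms to worry about. You instead act on $\hat\partial$ itself by unit-determinant elementary row and column operations and arrive at the block upper-triangular matrix $\begin{pmatrix}\partial & -B\\0 & -D\epsilon\end{pmatrix}$. (A small correction: $\epsilon$ acts on the column index $f$, so $A+B=D\epsilon$, not $\epsilon D$.) The elementary operations genuinely preserve $\mathrm{Fit}_0$ over $\ZHM$, so the reduction is clean; but it leaves the off-diagonal block $-B$, and the difficulty you flag there is a real gap rather than a loose end.

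Concretely, Laplace-expanding a mixed minor (choosing $a<|E|$ columns from the left block and $2|E|-a$ from the right) along the bottom zero rows produces terms of the form $\pm\det\begin{pmatrix}\partial_{S_1} & -B_{S_2'}\end{pmatrix}\cdot\det\bigl((-D\epsilon)_T\bigr)$. The second factor is visibly divisible by $\Theta_{\mathcal{V}}$, but the first is a maximal minor of the \emph{augmented} matrix $[\,\partial \mid -B\,]$, not of $\partial$, and there is no reason a priori for it to be divisible by $\Delta_M$. Substituting $2B=D\epsilon-\partial$ and expanding multilinearly only produces half-integer coefficients, exactly as you anticipate. The claim in fact cannot be deduced from the matrix identities $A-B=\partial$, $A+B=D\epsilon$ alone: one can write down integer matrices $\partial, D$ with $\partial\equiv D\epsilon \pmod{2}$ (so that $B$ is integral) for which the gcd of the maximal minors of $\begin{pmatrix}A&-B\\B&-A\end{pmatrix}$ is a proper divisor of $\Delta_M\cdot\Theta_{\mathcal{V}}$. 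Closing the gap therefore requires either invoking structure of $\partial$ and $D$ specific to veering triangulations beyond what Lemmas~\ref{lem:Alex:up:down} and~\ref{lem:veer:up:down} encode, or following the paper's route and arguing directly that the block-diagonal matrix itself presents $\hat A(M,\partial M)$.
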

\begin{proof}
	By Lemmas \ref{lem:Alex:up:down} and \ref{lem:veer:up:down} there is a commutative diagram
	
	\begin{equation*}\label{diagram:nEO}\begin{tikzcd}
			\ZHM^F\oplus \ZHM^F \arrow[r,"\partial \oplus D"] \arrow[d, "P_F \oplus Q_F"]& \ZHM^E\oplus \ZHM^E \arrow[r]\arrow[d, "P_E \oplus Q_E"]& A(M, \partial M)\oplus \mathcal{E}(\mathcal{V})\arrow[r] \arrow[d]& 0 \\
			\ZHM^F\oplus \ZHM^{\overline{F}} \arrow[r, "\hat{\partial}"] & \ZHM^E \oplus \ZHM^{\overline{E}} \arrow[r]& \hat{A}(\V) \arrow[r]& 0 
	\end{tikzcd}\end{equation*}
In other words, $\hat{A}(\V)$ admits a presentation matrix \[
\begin{bmatrix}\begin{array}{c|c}
	\partial & 0\\
	\hline
	0 & D
	\end{array}
\end{bmatrix}.
\]
	
	Recall from Lemma \ref{lem:relative=Alex} that $\Delta_M = \Delta_{(M, \partial M)}$. Hence if either $\Delta_M = 0$ or $\Theta_{\mathcal{V}} = 0$ then $\hat{\Delta}_{\V} = 0$. Otherwise any nonzero maximal minor of $\hat{\partial}$ is a product of a maximal minor of $D$ and a maximal minor of $\partial$. Let $\lbrace \Theta_i \rbrace_{i\in I}$ be the nonzero maximal minors of~$D$ and let $\lbrace \Delta_j \rbrace_{j \in J}$ be the nonzero maximal minors of $\partial$. 
	Then for each $i \in I$ there is $r_i\in \ZHM$ such that $\Theta_i = \Theta_{\mathcal{V}} \cdot r_i$ and $\gcd(r_i)_{i\in I} = 1$. Similarly, for each $j \in J$ there is $s_j \in \ZHM$ such that
	$\Delta_j = \Delta_M \cdot s_j$ and  $\gcd (s_j)_{j \in J} = 1$. Therefore
	\[\hat{\Delta}_{\V} = \gcd (\Theta_i\cdot \Delta_j)_{i,j} = \Theta_\mathcal{V} \cdot \Delta_M \cdot \gcd(r_is_j)_{i,j} = \Theta_\mathcal{V} \cdot \Delta_M.\qedhere\]
\end{proof}
\section{Dehn filling veering triangulations}\label{sec:Dehn-filling}
Let $M$ be a connected 3-manifold equipped with a finite veering triangulation $\mathcal{V}$. Let~$N$ be a Dehn filling of $M$. We do not assume that $N$ is closed. Following \eqref{defn:HM} we set
\begin{equation*}
	H_N = \bigslant{H_1(N;\zz)}{\text{torsion}}.
\end{equation*}
and  $b_1(N)= \mathrm{rank} \ H_N$. Let $i_\ast: H_M \rightarrow H_N$ be the epimorphism induced by the inclusion of $M$ into $N$. In this section we compare the \emph{specialisation} $i_\ast(\Theta_{\mathcal{V}})$ of the taut polynomial \emph{under the Dehn filling} and the Alexander polynomial $\Delta_N$ of $N$. The motivation behind studying $i_\ast(\Theta_{\mathcal{V}})$ lies in its connection with the Teichm\"uller polynomial \cite[Proposition 7.2]{LMT}. In Section \ref{section:Teich_poly} we apply the main result of this section, Theorem~\ref{thm:main}, to study the Teichm\"uller polynomial and fibred faces of the Thurston norm ball.

%\begin{remark*}
%	We treat $i_\ast(\Theta_{\mathcal{V}}) \in \ZHN$ as an invariant of the Dehn filling of a veering triangulation. There is another natural invariant corresponding to this Dehn filling. Namely, instead of computing  \[i_\ast(\Theta_{\mathcal{V}}) = i_\ast(\gcd (\text{minors of } D))\] we can compute \[\Theta_{\mathcal{V}}^{i_\ast} = \gcd(i_\ast(\text{minors of } D)).\]
%	The latter is the zeroth Fitting invariant of the module obtained from the taut module by the extension of scalars through $i_\ast$.	
%	Both these polynomials are elements of $\ZHN$. Clearly, $i_\ast(\Theta_{\mathcal{V}})$ divides $\Theta_{\mathcal{V}}^{i_\ast}$, but equality does not always hold.
%	
%	In this paper we consider only $i_\ast(\Theta_{\mathcal{V}})$, because  it is a generalisation of the Teichm\"uller polynomial; see Lemma \ref{lem:computing_Teich}. In Section \ref{section:Teich_poly} we analyse the relation between the Teichm\"uller polynomial of a fibred face of the Thurston norm ball in $H^1(N;\rr)$ and the Alexander polynomial of $N$.
%\end{remark*}

\subsection{The specialisation $i_\ast(\Delta_M) \in \ZHN$}
The first step in finding a relation between $i_\ast(\Theta_{\mathcal{V}})$ and $\Delta_N$ is to compare $i_\ast(\Delta_M)$ with $\Delta_N$.
This has been previously studied by Turaev. By first recursively applying the formulas given in Corollary 4.2 of \cite{Turaev_Alex} and then using Corollary~4.1 of \cite{Turaev_Alex} we obtain the following lemma.
\begin{lemma}\label{lem:i_Alex_under_filling}
	Let $M$ be a compact, orientable, connected 3-manifold with nonempty boundary consisting of tori. Let $N$ be any Dehn filling of $M$ such that $b_1(N)$ is positive. Denote by $ \ell_1, \ldots, \ell_k$ the core curves of the filling solid tori  in $N$ and by \mbox{$i_\ast: H_M \rightarrow H_N$} the epimorphism induced by the inclusion of $M$ into $N$. Assume that for every \mbox{$j \in \lbrace 1, \ldots, k \rbrace$} the class $\lbrack \ell_j\rbrack \in H_N$ is nontrivial.
	\begin{enumerate}[leftmargin=0.5cm, label={{\Roman*}}. ]
		\item $b_1(M) \geq 2$ and
		\begin{enumerate}[leftmargin=0.5cm]
			\item $b_1(N) \geq 2$. Then
			\[i_\ast(\Delta_M) = \Delta_N \cdot \prod\limits_{j=1}^k (\lbrack \ell_j \rbrack - 1).\]
			\item $b_1(N) =1$. Let $h$ denote the generator of $H_N$.
			\begin{itemize}[leftmargin=0.5cm]
				\item If $\partial N \neq \emptyset$ then 
				\[i_\ast(\Delta_M) = (h-1)^{-1}\cdot \Delta_N \cdot \prod\limits_{j=1}^k (\lbrack \ell_j \rbrack - 1).\]
				\item If $N$ is closed 
				\[i_\ast(\Delta_M) = (h-1)^{-2}\cdot \Delta_N \cdot \prod\limits_{j=1}^k (\lbrack \ell_j \rbrack - 1).\]
			\end{itemize}
		\end{enumerate}
		\item $b_1(M) = 1$. Let $h$ denote the generator of $H_N \cong H_M$.
		\begin{enumerate}
			\item If $\partial N \neq \emptyset$  then
			\[i_\ast(\Delta_M) = \Delta_N.\]
			\item If $N$ is closed set $\ell = \ell_1$ and then
			\[\pushQED{\qed}
			i_\ast(\Delta_M) = (h-1)^{-1}(\lbrack \ell \rbrack - 1) \cdot \Delta_N.\qedhere
			\popQED\]
		\end{enumerate}
	\end{enumerate}
\end{lemma}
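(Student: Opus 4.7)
The plan is to perform the $k$ Dehn fillings one at a time, producing a chain of intermediate manifolds $M = M_0, M_1, \ldots, M_k = N$, where $M_j$ is obtained from $M_{j-1}$ by Dehn filling a single boundary torus with core curve $\ell_j$. Let $i^j_\ast: H_{M_{j-1}} \rightarrow H_{M_j}$ denote the induced map on the torsion-free parts of first homology, so that the global map factors as $i_\ast = i^k_\ast \circ \cdots \circ i^1_\ast$. At each stage I would apply the single-filling formula from Corollary~4.2 of \cite{Turaev_Alex}, which computes $i^j_\ast(\Delta_{M_{j-1}})$ in terms of $\Delta_{M_j}$ and the class $\lbrack \ell_j \rbrack \in H_{M_j}$. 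Composing the $k$ resulting relations and simplifying yields the desired identities.

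Concretely, as long as $b_1(M_{j-1}) \geq 2$ and $b_1(M_j) \geq 2$, Turaev's recursion gives
\[i^j_\ast(\Delta_{M_{j-1}}) = \Delta_{M_j} \cdot (\lbrack \ell_j\rbrack - 1).\]
Iterating this identity throughout the sequence whenever possible proves Case~I(a). In Case~I(b) we start with $b_1(M) \geq 2$ but end with $b_1(N) = 1$; this means that at some step in the chain $b_1$ drops from $2$ to $1$, and that single step is governed instead by Corollary~4.1 of \cite{Turaev_Alex}, which produces the additional $(h-1)^{-1}$ factor (if $N$ still has boundary) or $(h-1)^{-2}$ factor (if the final filling also caps $N$ off closed). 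Case~II is analogous: since $b_1(M) = 1$ and $H_N \cong H_M$, the map $i_\ast$ is an isomorphism, so each filling either preserves $b_1$ at $1$ (where Corollary~4.2 contributes no factor, giving II(a)) or performs the final capping into a closed manifold (where Corollary~4.1 supplies the $(h-1)^{-1}(\lbrack \ell \rbrack - 1)$ correction of II(b)).

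The main technical point to verify is that at every intermediate step the hypothesis of Turaev's corollaries is met, namely that $\lbrack \ell_j \rbrack$ is non-trivial in the abelianisation $H_{M_j}$. This follows from the standing assumption that $\lbrack \ell_j \rbrack \neq 0$ in $H_N$ together with the fact that the inclusion-induced map $H_{M_j} \rightarrow H_N$ sends the class of the core curve $\ell_j$ to the corresponding class in $H_N$: non-triviality in $H_N$ forces non-triviality in $H_{M_j}$. A minor bookkeeping obstacle is choosing a favourable ordering of the fillings so that any drop in $b_1$ occurs as late as possible in the chain; this keeps the number of case distinctions manageable and ensures the recursive multiplication of the $(\lbrack \ell_j \rbrack - 1)$ factors proceeds uniformly until at most one exceptional terminal step.
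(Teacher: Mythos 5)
Your approach matches the paper's, which is essentially a citation: first recursively applying Turaev's Corollary 4.2 (the one-filling-at-a-time Dehn filling formula) and then Corollary 4.1 to handle the $b_1 = 1$ corrections. Your key technical observation is correct and worth making explicit: since $H_{M_j} \to H_N$ is a surjection sending $[\ell_j] \mapsto [\ell_j]$, nontriviality of $[\ell_j]$ in $H_N$ propagates backward to each intermediate $H_{M_i}$ for $i \geq j$, so Turaev's hypotheses hold at every stage.

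Two minor imprecisions are worth flagging. First, the ``favourable ordering'' remark is unnecessary: the drop of $b_1$ from $2$ to $1$ can occur at most once in any ordering of the fillings (since $b_1$ is non-increasing and decreases by at most one per filling), and the final answer $i_\ast(\Delta_M)$ manifestly does not depend on the ordering, so the bookkeeping is uniform regardless. Second, your reading of Case II as ``each filling either preserves $b_1$ at $1$ \ldots or performs the final capping'' suggests a chain of several fillings, but the half-lives-half-dies argument ($b_1(M) \geq$ number of boundary tori for a compact orientable $3$-manifold with toral boundary) forces a manifold with $b_1 = 1$ to have exactly one boundary torus. Hence in Case II there is at most one filling: $k = 0$ (so $N = M$ and II(a) is a tautology) or $k = 1$ (producing the closed manifold of II(b)). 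This simplification avoids any case distinctions in II.

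Modulo these small points, your proof is correct and takes the same route as the paper.
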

\subsection{The specialisation $i_\ast(\Theta_{\mathcal{V}}) \in \ZHN$}
Let $N^{fab}$ denote the maximal free abelian cover of~$N$. By removing the preimages of the filling solid tori under $N^{fab} \rightarrow N$ we obtain a free abelian cover $M^N$ of $M$. The deck group of the covering $M^N \rightarrow M$ is isomorphic to $H_N$. This situation is illustrated in the following commutative diagram.
 \begin{equation}\label{diagram:MN_Nab}\begin{tikzcd}
 		\Mab \arrow[r] & M^N \arrow[r] \arrow[d]& M\arrow[d,"i"]\\
 		&N^{fab} \arrow[r]& N.
 \end{tikzcd}\end{equation}
The veering triangulation $\mathcal{V}$ of $M$ induces a veering triangulation $\mathcal{V}^N$ of $M^N$. By Lemma \ref{lem:edge-orient-factors} if $\mathcal{V}^N$ is edge-orientable, then the edge-orientation homomorphism of $\mathcal{V}$ factors through $H_N$. We denote this factor by $\sigma_N: H_N \rightarrow  \lbrace -1, 1 \rbrace$.
 
 \begin{theorem}\label{thm:main}
 	Let $\mathcal{V}$ be a finite veering triangulation of a connected 3-manifold~$M$. Let~$N$ be a Dehn filling of~$M$ such that $s=b_1(N)$ is positive. Denote by $\ell_1, \ldots, \ell_k$ the core curves of the filling solid tori in $N$, and by $i_\ast: H_M \rightarrow H_N$ the epimorphism induced by the inclusion of $M$ into $N$.	
 	Assume that the veering triangulation $\mathcal{V}^N$ is edge-orientable and that for every $j \in \lbrace 1, \ldots, k \rbrace$ the class $\lbrack \ell_j \rbrack \in H_N$ is nontrivial. Let \mbox{$\sigma_N: H_N \rightarrow \lbrace -1, 1 \rbrace$} be the homomorphism through which the edge-orientation homomorphism of $\mathcal{V}$  factors.
 	\begin{enumerate}[leftmargin=0.5cm, label={{\Roman*}}. ]
 		\item $b_1(M) \geq 2$ and
 		\begin{enumerate}[leftmargin=0.5cm]
 			\item $s \geq 2$. Then
 			\[i_\ast(\Theta_\mathcal{V})(h_1, \ldots, h_s) = \Delta_N (\sigma_N(h_1)\cdot h_1, \ldots, \sigma_N(h_s)\cdot h_s)\cdot \prod\limits_{j=1}^k (\lbrack \ell_j \rbrack - \sigma_N(\lbrack \ell_j \rbrack)).\]
 			\item $s  =1$. Let $h$ denote the generator of $H_N$.
 			\begin{itemize}[leftmargin=0.5cm]
 				\item If $\partial N \neq \emptyset$ then 
 				\[i_\ast(\Theta_\mathcal{V})(h) = (h-\sigma_N(h))^{-1}\cdot \Delta_N (\sigma_N(h)\cdot h) \prod\limits_{j=1}^k (\lbrack \ell_j \rbrack - \sigma_N(\lbrack \ell_j \rbrack)).\]
 				\item If $N$ is closed then 
 				\[i_\ast(\Theta_\mathcal{V})(h) = (h-\sigma_N(h))^{-2}\cdot \Delta_N(\sigma_N(h)\cdot h) \cdot \prod\limits_{j=1}^k (\lbrack \ell_j \rbrack - \sigma_N(\lbrack \ell_j \rbrack)).\]
 			\end{itemize}
 		\end{enumerate}
 		\item $b_1(M) = 1$. Let $h$ denote the generator of $H_N \cong H_M$.
 		\begin{enumerate}
 			\item If $\partial N \neq \emptyset$  then
 			\[i_\ast(\Theta_\mathcal{V})(h) = \Delta_N(\sigma_N(h)\cdot h).\]
 			\item If $N$ is closed set $\ell = \ell_1$, and then
 			\[
 			i_\ast(\Theta_\mathcal{V})(h) = (h-\sigma_N(h))^{-1}(\lbrack \ell \rbrack - \sigma_N(\lbrack \ell \rbrack)) \cdot \Delta_N(\sigma_N(h)\cdot h).\]
 			%		\[\pushQED{\qed}
 			%		i_\ast(\Delta_M) = (h-1)^{-1}(\lbrack \ell \rbrack - 1) \cdot \Delta_N.\qedhere
 			%		\popQED\]
 		\end{enumerate}
 	\end{enumerate}
 	
 	In particular  with the above assumptions we have
 	\[i_\ast(\Theta_{\mathcal{V}})(h_1, h_2, \ldots, h_s) = \Delta_N(\pm h_1,  \ldots, \pm h_s)\]
 	if and only if one of the following four conditions holds
 	\begin{itemize}
 		\item $N=M$, or
 		\item  $b_1(N)=1$, $\partial N \neq \emptyset$, $k=1$, and $\lbrack \ell_1 \rbrack$ generates $H_N$, or
 		\item $b_1(N)=1$, $N$ is closed, $k=2$ and $\lbrack \ell_1 \rbrack = \lbrack \ell_2 \rbrack$ generates $H_N$, or
 		\item $b_1(M)=1$, $N$ is closed and $\lbrack \ell_1 \rbrack$ generates $H_N$.
 	\end{itemize}
 \end{theorem}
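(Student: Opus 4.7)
The plan is to combine Corollary \ref{cor:Alex=taut} with Turaev's Lemma \ref{lem:i_Alex_under_filling} by exploiting a ring-theoretic compatibility between the sign-twist endomorphisms on $\ZHM$ and $\ZHN$. First, since $\mathcal{V}^N$ is assumed edge-orientable and the diagram \eqref{diagram:MN_Nab} exhibits $\Mab$ as a further cover of $M^N$, an edge-orientation on $\mathcal{V}^N$ lifts, and so $\Vab$ is also edge-orientable. Hence Corollary \ref{cor:Alex=taut} applies and identifies $\Theta_{\mathcal{V}}$ with $\overline{\sigma}(\Delta_M)$, where $\overline{\sigma}: \ZHM \to \ZHM$ is the ring endomorphism $h \mapsto \sigma(h)\cdot h$ induced by the factor $\sigma: H_M \to \lbrace -1, 1\rbrace$ of $\omega$ provided by Lemma \ref{lem:edge-orient-factors}. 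The same lemma, applied to $H_N$, supplies the factor $\sigma_N: H_N \to \lbrace -1, 1\rbrace$, and uniqueness of factorization of $\omega$ through the surjection $\pi_1(M) \to H_N$ yields $\sigma = \sigma_N \circ i_\ast$ on $H_M$.

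Define $\overline{\sigma}_N: \ZHN \to \ZHN$ analogously by $g \mapsto \sigma_N(g)\cdot g$. The identity $\sigma = \sigma_N \circ i_\ast$ translates, upon checking on group elements, to the ring-theoretic compatibility $i_\ast \circ \overline{\sigma} = \overline{\sigma}_N \circ i_\ast$ as maps $\ZHM \to \ZHN$. Applying $i_\ast$ to $\Theta_{\mathcal{V}} = \overline{\sigma}(\Delta_M)$ therefore gives
\[i_\ast(\Theta_\mathcal{V}) \;=\; \overline{\sigma}_N\bigl(i_\ast(\Delta_M)\bigr).\]
Substituting Lemma \ref{lem:i_Alex_under_filling} for $i_\ast(\Delta_M)$ and applying $\overline{\sigma}_N$ termwise, the $\Delta_N$ factor becomes $\Delta_N(\sigma_N(h_1)\cdot h_1, \ldots, \sigma_N(h_s)\cdot h_s)$, and each factor $(x - 1)$ with $x \in \lbrace h, \lbrack \ell_j \rbrack \rbrace$ becomes $\sigma_N(x)\cdot x - 1 = \sigma_N(x)\bigl(x - \sigma_N(x)\bigr)$, equal to $(x - \sigma_N(x))$ up to the unit $\sigma_N(x) \in \lbrace \pm 1 \rbrace$. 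This directly reproduces the displayed formulas in cases I.a, I.b, II.a, and II.b.

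The ``if and only if'' addendum is a short polynomial divisibility check in $\ZHN$: the equality $i_\ast(\Theta_{\mathcal{V}}) = \Delta_N(\pm h_1, \ldots, \pm h_s)$ holds precisely when the product $\prod_j (\lbrack \ell_j \rbrack - \sigma_N(\lbrack \ell_j \rbrack))$ together with the appropriate inverse power of $(h - \sigma_N(h))$ cancels to a unit. Since each $(g - \sigma_N(g))$ with $g$ nontrivial in $H_N$ is itself a non-unit, this forces exact cancellation, and case-by-case inspection yields the four listed conditions. A subtle point is that case II.a of Lemma \ref{lem:i_Alex_under_filling} automatically produces $\Delta_N(\pm h)$ with no extra factors, but the half-lives-half-dies theorem forces $M$ to have a single boundary torus when $b_1(M) = 1$, so $\partial N \neq \emptyset$ rules out any actual filling and this subcase collapses to $N = M$, placing it inside the first listed condition.

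The main technical obstacle is the ring-theoretic compatibility $i_\ast \circ \overline{\sigma} = \overline{\sigma}_N \circ i_\ast$, which is the essential bridge between Corollary \ref{cor:Alex=taut} and Lemma \ref{lem:i_Alex_under_filling}; once it is established, everything reduces to substitution into Turaev's formulas and a routine polynomial divisibility analysis in $\ZHN$.
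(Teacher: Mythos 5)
Your proposal is correct and follows essentially the same approach as the paper: promote edge-orientability of $\mathcal{V}^N$ to $\Vab$, apply Corollary \ref{cor:Alex=taut} to write $\Theta_{\mathcal{V}}$ as a sign-twist of $\Delta_M$, observe $\sigma = \sigma_N \circ i_\ast$ to push the twist through $i_\ast$, and then substitute Turaev's formulas from Lemma \ref{lem:i_Alex_under_filling}. The observations you add --- that the ``if and only if'' list reduces to a unit-cancellation check in $\ZHN$, and that the half-lives-half-dies theorem collapses case II.a to $N=M$ --- are correct clarifications that the paper leaves implicit rather than a different route.
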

 \begin{proof}
 	Set $r= b_1(M)$. First note that if $\mathcal{V}^N$ is edge-orientable, then $\Vab$ is edge-orientable as well. Therefore there is a homomorphism $\sigma: H_M \rightarrow \lbrace -1,1 \rbrace$ through which the edge-orientation homomorphism of $\mathcal{V}$ factors. By Corollary \ref{cor:Alex=taut} if $\Vab$ is edge-orientable, then
 	\[\Theta_{\mathcal{V}}(g_1, \ldots, g_r) = \Delta_M(\sigma(g_1)\cdot g_1, \ldots, \sigma(g_r)\cdot g_r).\] 
 	
 	Since $\mathcal{V}^N$ is edge-orientable, the kernel of $i_\ast$ is contained in the kernel of $\sigma$ and $\sigma(g) = \sigma_N(i_\ast(g))$ for every $g \in H_M$.
 	Therefore
 	\[i_\ast(\Theta_{\mathcal{V}})(h_1, \ldots, h_s) = i_\ast(\Delta_M)(\sigma_N(h_1)\cdot h_1, \ldots, \sigma_N(h_s)\cdot h_s),\]
 	where $s = b_1(N)$.
 	Now the claim follows from Lemma \ref{lem:i_Alex_under_filling}.
 \end{proof}
 
 \subsection{Edge-orientability of $\mathcal{V}^N$}

We already noted in the proof of Corollary \ref{cor:even_torsion} that the veering triangulation $\Vab$ of the \emph{maximal} free abelian cover of $M$ can fail to be edge-orientable only if the torsion subgroup of $H_1(M;\zz)$ is of even order. 	
Edge-orientability of $\mathcal{V}^N$ additionally depends on the Dehn filling slopes that produce $N$ from~$M$.

Suppose that the truncated model of~$M$ has $b$ boundary components $T_1, \ldots, T_b$. Let~$\gamma_j$ be a Dehn filling slope on $T_j$. 
Suppose that $N$ is obtained from $M$ by Dehn filling $T_j$ along $\gamma_j$. Then $\mathcal{V}^N$ is edge-orientable if and only if $\Vab$ is edge-orientable and moreover $\sigma(\gamma_j)=1$ for every $j \in \lbrace 1,\ldots, b \rbrace$. (Here we identify~$\gamma_j$ with the image of its homology class in $H_1(T_j;\zz)$ under $H_1(T_j;\zz) \rightarrow H_M$.) 

Suppose that $\Vab$ is edge-orientable. 
If the preimage of every torus boundary component of~$M$ under the covering map $\Vor \rightarrow \mathcal{V}$ is disconnected then any Dehn filling~$N$ of~$M$ determines  an edge-orientable veering triangulation~$\mathcal{V}^N$.
More generally, denote by~$\mathcal{I}$ the subset of $\lbrace 1, \ldots, b \rbrace$ indicating the boundary tori of $M$ with a connected preimage under $\Vor \rightarrow \mathcal{V}$. For $j \in \mathcal{I}$ let $(a_j, b_j)$ be a basis of $H_1(T_j;\zz)$ such that $\sigma(a_j) = -1$ and $\sigma(b_j)=1$. Then $\gamma_j = x_ja_j + y_jb_j$ for some $x_j, y_j \in \zz$ such that $\gcd(x_j, y_j)=1$. The triangulation $\mathcal{V}^N$ is edge-orientable if and only if $x_j \in 2\zz$ for every  $j \in \mathcal{I}$.

\begin{remark*}There are 87047 veering triangulations in the Veering Census \cite{VeeringCensus}. 62536 (71.8\%) of them are not edge-orientable.	
	Out of 62536 not edge-orientable veering triangulations there are 49637 (79.4\%) whose edge-orientable double cover $\Vor$ has the same number of cusps as $\mathcal{V}$. There are only 5854 (9.4\%) whose edge-orientable double cover $\Vor$ has twice as many cusps as $\mathcal{V}$.
\end{remark*}

\section{Consequences for the Teichm\"uller polynomial and fibred faces}\label{section:Teich_poly}
The taut polynomial of a veering triangulation is related to an older polynomial invariant of 3-manifolds called the \emph{Teichm\"uller polynomial}. It was introduced by McMullen in \cite{McMullen_Teich} and is associated to a \emph{fibred face of the Thurston norm ball}.
In this section we interpret Theorem \ref{thm:main} as a result relating the Alexander polynomial of a 3-manifold and the Teichm\"uller polynomial of its fibred face. We interpret this relation in terms of orientability of invariant laminations in the fibres.
\subsection{Fibred faces of the Thurston norm ball and the Teichm\"uller polynomial}
If $N$ is a compact, oriented, hyperbolic 3-manifold then $H^1(N;\rr) \cong H_2(N,\partial N;\rr)$ is equipped with the \emph{Thurston norm} $\Thnorm{\cdot}$ \cite{Thur_norm}. 
The unit norm ball $B_{\mathrm{Th}}(N)$ with respect to $\Thnorm{\cdot}$ is a finite-sided polytope with vertices in $H^1(N; \mathbb{Q})$ \cite[Theorem 2]{Thur_norm}. Thurston observed that if~$S$ is the fibre of a fibration of $N$ over the circle then the homology class~$\lbrack S \rbrack$ lies in the interior of the cone $\rr_+\hspace{-0.1cm} \cdot \face{F}$ on some top-dimensional face $\face{F}$ of $B_{\mathrm{Th}}(N)$. Moreover, in this case every  integral primitive class $\lbrack S' \rbrack$ from the interior of $\rr_+\hspace{-0.1cm} \cdot \face{F}$ determines a fibration of~$N$ over the circle \cite[Theorem 3]{Thur_norm}. 
Top-dimensional faces of $B_{\mathrm{Th}}(N)$ with the above property are called \emph{fibred faces} of the Thurston norm ball in $H^1(N;\rr)$.

Let $N$ be as above. Let $\face{F}$ be a fibred face of the Thurston norm ball in $H^1(N;\rr)$. Picking an integral primitive class $\xi$ from the interior of the cone $\rr_+\hspace{-0.1cm}\cdot \face{F}$ fixes an expression of $N$ as the mapping torus
\[N = N_\psi =\bigslant{\left(S \times \lbrack 0, 1 \rbrack\right)}{\lbrace (x,1)\sim (\psi(x),0) \rbrace}\]
of a pseudo-Anosov homeomorphism $\psi: S\rightarrow S$ \cite[Proposition 2.6]{Thurston_map_tor}. The surface~$S$ is the \emph{taut representative} of $\xi$, which means that it satisfies  $\chi(S) = -\Thnorm{\xi}$. The homeomorphism $\psi$ is called the \emph{monodromy} of the fibration $S \rightarrow N \rightarrow S^1$.
It determines a pair of 1-dimensional laminations in $S$ which are invariant under $\psi$. Let $\lambda$ denote the \emph{stable} lamination of~$\psi$. The mapping torus~$\mathcal{L}$ of~$\lambda$ gives the stable lamination of the suspension flow on $N$ determined by $\xi$.  

McMullen proved that up to isotopy the lamination $\mathcal{L}$ does not depend on the chosen primitive integral class from the interior of $\rr_+\hspace{-0.1cm}\cdot \face{F}$ \cite[Corollary 3.2]{McMullen_Teich}. This also follows from a result of Fried \cite[Theorem~14.11]{Fried_suspension} which says that fibrations lying over the same fibred face determine isotopic suspension flows.
Using this McMullen defined the \emph{Teichm\"uller polynomial}~$\Theta_{\texttt{F}}$ of $\face{F}$ as  the zeroth Fitting invariant of the \emph{module of transversals} to the lamination induced by $\mathcal{L}$ in the maximal free abelian cover of $N$ \cite[Section~3]{McMullen_Teich}. The main feature of this polynomial is that it encodes information on the stretch factors of monodromies of all fibrations lying in the fibred cone $\rr_+\hspace{-0.1cm}\cdot \face{F}$. More precisely, by viewing $\xi$ as a homomorphism $\xi: H_N \rightarrow \zz$ we can consider the 1-variable polynomial $\xi(\Theta_{\mathtt{F}})$, the \emph{specialisation} of $\Theta_{\mathtt{F}}$ at $\xi$.
McMullen showed that the largest real root of $\xi(\Theta_{\mathtt{F}})$ is equal to the stretch factor of $\psi$ \cite[Theorem~5.1]{McMullen_Teich}.

\subsection{The Teichm\"uller polynomial and the taut polynomial}
Let $N$ be a compact, oriented, hyperbolic 3-manifold which is fibred over the circle. Let $\face{F}$ be a fibred face of the Thurston norm ball in $H^1(N;\rr)$. It determines a unique (up to isotopy and reparametrisation) suspension flow $\Psi$ on $N$ \cite[Theorem~14.11]{Fried_suspension}. By $\mathrm{sing}(\face{F})$ we denote the set $\lbrace \ell_1, \ldots, \ell_k\rbrace$ of the  \emph{singular orbits} of $\Psi$. If $\mathrm{sing}(\face{F}) = \emptyset$ we say that $\face{F}$ is \emph{fully-punctured}.

If we pick a fibration lying over $\face{F}$ we can follow Agol's algorithm \cite[Section~4]{Agol_veer} to construct a \emph{layered} veering triangulation $\mathcal{V}$ of $M =  N \bez \mathrm{sing}(\face{F})$. This triangulation does not depend on the chosen fibration from $\rr_+\hspace{-0.1cm}\cdot \face{F}$ \cite[Proposition 2.7]{MinskyTaylor}. 
Landry, Minsky and Taylor observed that we can compute the Teichm\"uller polynomial of $\face{F}$ using the taut polynomial of $\mathcal{V}$.
\begin{lemma}[Proposition 7.2 of \cite{LMT}] \label{lem:computing_Teich}
	Let $N$ be a compact, connected, oriented, hyperbolic 3-manifold. Let  $\face{F}$ be a fibred face of the Thurston norm ball in $H^1(N;\rr)$. Denote by~$\mathcal{V}$ the veering triangulation of  $M =  N \bez \mathrm{sing}(\face{F})$ associated to $\face{F}$. Let $i_\ast: H_M \rightarrow H_N$ be the epimorphism induced by the inclusion of $M$ into $N$. Then
	\[\pushQED{\qed} 
	\Theta_{\mathtt{F}} = i_\ast(\Theta_{\mathcal{V}}). \qedhere
	\popQED\] 	
\end{lemma}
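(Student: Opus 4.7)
The strategy is to show that the module underlying the Teichm\"uller polynomial of $\face{F}$ is naturally isomorphic, as a $\ZHN$-module, to the specialisation $\mathcal{E}(\mathcal{V}) \otimes_{\ZHM} \ZHN$ of the taut module of $\mathcal{V}$, and then pass to zeroth Fitting invariants.

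First I would use the Agol--Gu\'eritaud construction (as recalled in Section 4 and Lemma \ref{lemma:EO:to}) to identify the stable lamination $\mathcal{L}$ of the suspension flow on $N$ associated to $\face{F}$ with the lamination fully carried by the upper branched surface $\mathcal{B}^U$ of $\mathcal{V}$, after removing a neighbourhood of the singular orbits $\mathrm{sing}(\face{F}) = \{\ell_1, \ldots, \ell_k\}$. Concretely, $\mathcal{L} \cap M$ is carried by $\mathcal{B}^U \subset M$, the carrying map is surjective on the complement of $\mathrm{sing}(\face{F})$, and $\mathcal{L}$ is the closure of this carried lamination in $N$. In particular, since every leaf of $\mathcal{L}$ misses $\mathrm{sing}(\face{F})$, any transversal to $\mathcal{L}$ in $N^{fab}$ is naturally a transversal to the lift of $\mathcal{L}\cap M$ in the intermediate cover $M^N$ of $M$ appearing in diagram~\eqref{diagram:MN_Nab}.

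Next I would translate transversal weights into edge weights. By the standard correspondence between a carried lamination and invariant weights on the carrying branched surface, a transversal measure on $\mathcal{L}$ lifted to $N^{fab}$ is equivalent to an $H_N$-equivariant non-negative weight on the branches of $\mathcal{B}^U \subset M^N$ satisfying the switch conditions. Now each branch of $\mathcal{B}^U_t$ (see the description preceding Lemma \ref{lemma:EO:to}) corresponds to an equatorial edge of the tetrahedron $t$ of the appropriate colour, and the splitting of $\mathcal{B}^U_t$ from bottom to top imposes precisely the switch relation between these edges determined by the upper track $\tau^U$ on each face $f$ of $\mathcal{V}$. Thus the module of transversals is presented by the same matrix as the taut module of $\mathcal{V}^N$, giving an isomorphism of $\ZHN$-modules
\[
\{\text{transversals to } \mathcal{L} \text{ in } N^{fab}\} \;\cong\; \mathcal{E}(\mathcal{V}^N) \;\cong\; \mathcal{E}(\mathcal{V})\otimes_{\ZHM}\ZHN.
\]
Here the second isomorphism follows because $\mathcal{V}^N$ is obtained from $\Vab$ by quotienting the deck group from $H_M$ to $H_N$, so the presentation matrix of $\mathcal{E}(\mathcal{V}^N)$ is the image of the presentation matrix $D$ of $\mathcal{E}(\mathcal{V})$ under $i_\ast: \ZHM \to \ZHN$.

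Finally, the zeroth Fitting invariant is compatible with base change: it is the gcd of the maximal minors of any presentation matrix. Hence the zeroth Fitting invariant of the transversal module equals $i_\ast$ applied to the gcd of the maximal minors of $D$, i.e.\ $i_\ast(\Theta_{\mathcal{V}})$. By McMullen's definition~\cite[Section 3]{McMullen_Teich} the left-hand side is precisely $\Theta_{\mathtt{F}}$. The main obstacle is the first step, namely the careful verification that the splitting sequence defining $\mathcal{B}^U$ exactly reproduces the switch relations imposed by $\tau^U$ (so that the two presentation matrices agree on the nose, up to units in $\ZHN$, and not merely up to stabilisation); once this dictionary is in place, the module isomorphism and the conclusion on Fitting invariants are formal.
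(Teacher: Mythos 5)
The paper does not actually prove this lemma: it is quoted as ``Proposition 7.2 of \cite{LMT}'' with the QED mark attached to the statement, signalling that the result is taken from the literature without an in-paper argument. So only the internal correctness of your sketch can be assessed, not a comparison with a proof the paper does not give.

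Your outline follows what I believe is the natural (and the LMT) strategy: identify McMullen's module of transversals in $N^{fab}$ with the specialised taut module $\mathcal{E}(\mathcal{V})\otimes_{\ZHM}\ZHN$, then compare Fitting invariants. But there is a genuine gap in your last step. You assert that ``the zeroth Fitting invariant is compatible with base change'' and conclude that the Fitting invariant of the specialised module equals $i_\ast(\Theta_{\mathcal{V}})$. That is not automatic. Fitting \emph{ideals} commute with base change, but Fitting \emph{invariants} (gcds of generators) in general do not: applying the ring epimorphism $i_\ast:\ZHM\to\ZHN$ to the maximal minors of $D$ can strictly increase their gcd. What your argument actually yields is the one-sided divisibility $i_\ast(\Theta_{\mathcal{V}})\mid\gcd\bigl(\text{maximal minors of }i_\ast(D)\bigr)=\Theta_{\mathtt{F}}$, not the asserted equality. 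To close the gap one needs an extra input, for instance that some distinguished maximal minor of $D$ already equals $\Theta_{\mathcal{V}}$ up to a unit in $\ZHM$, so that its image under $i_\ast$ bounds $\Theta_{\mathtt{F}}$ from above; your sketch does not supply this, and it is precisely where the LMT proof has content. Note the cautionary example provided by Lemma \ref{lem:i_Alex_under_filling} in this very paper: specialising the Alexander polynomial under a Dehn filling picks up extra factors $(\lbrack\ell_j\rbrack-1)$ and powers of $(h-1)^{-1}$, exactly because gcds jump under specialisation, so the phenomenon you gloss over is not hypothetical.

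A smaller imprecision: you justify the train-track presentation of the module of transversals via ``the standard correspondence between a carried lamination and invariant weights on the carrying branched surface.'' That correspondence concerns non-negative weight systems, i.e.\ transverse measures, which is a different object from McMullen's $\ZHN$-module of transversals (a module built from isotopy classes of transversal arcs with integer coefficients and relations from branching). The presentation you want --- branches as generators, switch relations as relations --- is established directly in McMullen's construction and does not pass through measured laminations. Your conclusion there is fine; the cited justification is not quite the right one.
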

An immediate corollary of Lemma \ref{lem:computing_Teich} and Theorem \ref{thm:taut=twisted} is that the Teichm\"uller polynomials are just specialisations of twisted Alexander polynomials.
\begin{corollary}\label{cor:Teich:twisted}
Let  $\face{F}$ be a fibred face of the Thurston norm ball in $H^1(N;\rr)$. Let $M = N \bez\mathrm{sing}(\face{F})$. Then the Teichm\"uller polynomial $\Theta_{\mathtt{F}}$ of $\face{F}$ is a specialisation of a twisted Alexander polynomial of $M$. \qed
\end{corollary}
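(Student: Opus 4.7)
The plan is to combine the two key inputs already established in the excerpt: Lemma \ref{lem:computing_Teich} (from \cite[Proposition 7.2]{LMT}), which identifies the Teichm\"uller polynomial $\Theta_{\face{F}}$ with the specialisation $i_\ast(\Theta_{\mathcal{V}})$ of the taut polynomial of the layered veering triangulation $\mathcal{V}$ of $M = N \bez \mathrm{sing}(\face{F})$; and Proposition \ref{prop:taut=twisted}, which identifies $\Theta_{\mathcal{V}}$ with the twisted Alexander polynomial $\Delta_M^{\omega\otimes \pi}$, where $\omega:\pi_1(M)\to\{-1,1\}$ is the edge-orientation homomorphism of $\mathcal{V}$ and $\pi:\pi_1(M)\to H_M$ is the natural projection.

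First, I would invoke Lemma \ref{lem:computing_Teich} to write
\[
\Theta_{\face{F}} = i_\ast(\Theta_{\mathcal{V}}),
\]
where $i_\ast:\ZHM\to\ZHN$ is the ring homomorphism induced by the inclusion $M\hookrightarrow N$. Then I would substitute the equality from Proposition \ref{prop:taut=twisted} to obtain
\[
\Theta_{\face{F}} = i_\ast\!\left(\Delta_M^{\omega\otimes \pi}\right).
\]
Since $\Delta_M^{\omega\otimes \pi}\in\ZHM$ is a twisted Alexander polynomial of $M$ (as defined in Subsection \ref{subsec:twisted}) and $i_\ast$ is precisely the specialisation map induced by the inclusion of $M$ into $N$, this exhibits $\Theta_{\face{F}}$ as a specialisation of a twisted Alexander polynomial of $M$, which is exactly the claim.

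There is no substantial obstacle to overcome: the corollary is a direct synthesis of two prior results. The only thing one might wish to explicitly remark is that the representation $\omega\otimes \pi$ of $\pi_1(M)$ is intrinsic to $\mathcal{V}$, so the resulting twisted Alexander polynomial is a well-defined invariant of the drilled manifold $M$, and the specialisation homomorphism $i_\ast:\ZHM\to\ZHN$ depends only on the pair $(M,N)$. Thus the proof fits in a single line once the machinery is assembled.
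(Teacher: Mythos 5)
Your argument is exactly the paper's: it states Corollary \ref{cor:Teich:twisted} as an immediate consequence of Lemma \ref{lem:computing_Teich} and Proposition \ref{prop:taut=twisted}, just as you do. Correct and complete.
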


	An algorithm to compute the Teichm\"uller polynomial using Lemma \ref{lem:computing_Teich} is given in \cite[Section 8]{taut_veer_teich}. As mentioned in the Introduction, Corollary \ref{cor:Teich:twisted} implies a much faster algorithm to compute the Teichm\"uller polynomial via Fox calculus.
	
	 Furthermore, Lemma \ref{lem:computing_Teich} allows us to interpret Theorem \ref{thm:main} as a theorem which relates the Teichm\"uller polynomial with the Alexander polynomial.  The details of that relationship are given in Corollary \ref{cor:Teich_in_one}. As an application, we prove equality
	between a fibred face $\face{F}$ of the Thurston norm ball and a face $\face{A}$ of the Alexander norm ball under a weaker condition on $\face{F}$ than McMullen's condition from \cite[Theorem 7.1]{McMullen_Teich}. In Subsection \ref{subsec:orientability} we explain how orientable fibred classes control the formulas given in Corollary \ref{cor:Teich_in_one}.
\subsection{The Teichm\"uller polynomial and the Alexander polynomial}
In this subsection  we prove lemmas that allow us to interpret Theorem \ref{thm:main} as a strenghtening of Theorem~7.1 of \cite{McMullen_Teich}. This is summarised in Corollary \ref{cor:Teich_in_one}.

Recall that given a fibred face $\face{F}$ of the Thurston norm ball in $H^1(N;\rr)$ there is
\begin{itemize}
	\item a unique (up to isotopy) 2-dimensional lamination $\mathcal{L}$ in $N$ associated to $\face{F}$ \cite[Corollary 3.2]{McMullen_Teich},
	\item a unique veering triangulation $\mathcal{V}$ of $M=N \bez \mathrm{sing}(\face{F})$ carrying fibres from $\rr_+\hspace{-0.1cm}\cdot \face{F}$ punctured at the singularities of the monodromies \cite[Proposition 2.7]{MinskyTaylor}.
\end{itemize}
The lamination $\mathcal{L}$ misses $\mathrm{sing}(\face{F})$ and therefore can also be seen as a lamination in $M$.
 Consistently with the diagram \eqref{diagram:MN_Nab} we denote the laminations in $M^N$, $M^{fab}$ induced by $\mathcal{L} \subset M$ by $\mathcal{L}^N$, $\mathcal{L}^{fab}$, respectively. The lamination $\mathcal{L}^N$ can also be seen inside $N^{fab}$.
 
By Lemma \ref{lemma:EO:to}, $\mathcal{L}$ is transversely orientable if and only if $\mathcal{V}$ is edge-orientable. This property passes to covers of $M$.
\begin{corollary} \label{cor:trans_or_edge_or}
Let $\face{F}$ be a fibred face of the Thurston norm ball in $H^1(N;\rr)$. Let
	\begin{itemize}
		\item $\mathcal{L}$ be the 2-dimensional lamination in $N$ associated to $\face{F}$,
		\item $\mathcal{V}$ be the veering triangulation of $M=N \bez \mathrm{sing}(\face{F})$ associated to $\face{F}$.
	\end{itemize}  Then the lamination $\mathcal{L}^N$ in $N^{fab}$ is transversely orientable if and only if the veering triangulation $\mathcal{V}^N$ of $M^N$ is edge-orientable. \qed
\end{corollary}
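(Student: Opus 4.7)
The plan is to reduce the statement to Lemma~\ref{lemma:EO:to} applied in the cover $M^N$. By construction, $\mathcal{V}$ is the veering triangulation arising from the suspension flow $\Psi$ on $N$ restricted to $M=N\smallsetminus\mathrm{sing}(\face{F})$ via the Agol--Gu\'eritaud procedure, and $\mathcal{L}$ (away from the singular orbits) is the stable lamination of this flow. The first step is to verify that the Agol--Gu\'eritaud construction is equivariant under covers, so that pulling back the flow and its stable lamination along $M^N\to M$ yields a flow $\Psi^N$ on $M^N$ whose associated veering triangulation is exactly $\mathcal{V}^N$ and whose stable lamination is $\mathcal{L}^N|_{M^N}$.

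Once this naturality is in place, Lemma~\ref{lemma:EO:to} applied in $M^N$ gives that $\mathcal{V}^N$ is edge-orientable if and only if $\mathcal{L}^N|_{M^N}$ is transversely orientable. The final step is to argue that transverse orientability of $\mathcal{L}^N$ as a lamination in $N^{fab}$ coincides with transverse orientability of $\mathcal{L}^N|_{M^N}$ in $M^N$. This is immediate from the fact that $N^{fab}\smallsetminus M^N$ is a disjoint union of open solid tori (the preimages of the filling solid tori over $\mathrm{sing}(\face{F})$) which are disjoint from $\mathcal{L}^N$; a continuous nonvanishing transverse vector field on the lamination in one of these two ambient manifolds is automatically a continuous nonvanishing transverse vector field on it in the other.

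The only non-routine point is the equivariance step, which amounts to observing that the veering triangulation associated to a pseudo-Anosov flow without perfect fits is intrinsic to the flow and commutes with taking covers. This is well-documented in the foundational treatments of the Agol--Gu\'eritaud construction, so the argument just assembles the pieces: naturality of the construction, Lemma~\ref{lemma:EO:to} in $M^N$, and the cover-independence of transverse orientability away from the filling tori.
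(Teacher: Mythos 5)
Your plan is essentially the paper's argument: the paper deduces the corollary by noting that the equivalence of Lemma~\ref{lemma:EO:to} between transverse orientability of the stable lamination and edge-orientability of the veering triangulation "passes to covers of $M$," which is exactly your naturality observation applied to $M^N$, plus the (implicit) remark that transverse orientability of $\mathcal{L}^N$ in $N^{fab}$ and in $M^N$ agree because the complement $N^{fab}\smallsetminus M^N$ is a union of solid tori disjoint from the lamination. You have spelled these steps out more explicitly than the paper does, but the route is the same.
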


Recall that by $\mathrm{sing}(\face{F}) = \lbrace \ell_1, \ldots, \ell_k \rbrace$ we denote the singular orbits of the flow canonically associated to $\face{F}$. In the language of Theorem \ref{thm:main} they correspond to the core curves of the filling solid tori in $N$. In Theorem \ref{thm:main} we needed to assume that their classes in $H_N = \bigslant{H_1(N;\zz)}{\text{torsion}}$ are nontrivial. In the fibred setting we know that the classes $\lbrack \ell_j \rbrack$   have  nonzero algebraic intersection with every $\lbrack S \rbrack \in \mathrm{int}(\rr_+\hspace{-0.1cm}\cdot \face{F}) \cap H_2(N, \partial N;\zz)$. Therefore we get the following lemma.
\begin{lemma}\label{lem:sing_orb_nontrivial}
	Let $\face{F}$ be a fibred face of the Thurston norm ball in $H^1(N;\rr)$. Let $\mathrm{sing}(\face{F}) = \lbrace \ell_1, \ldots, \ell_k \rbrace$ be the singular orbits of the flow associated to $\face{F}$. Then for every \linebreak $j \in \lbrace 1, 2, \ldots, k\rbrace$ the class $\lbrack \ell_j \rbrack$ in $H_N$ is nontrivial. \qed
\end{lemma}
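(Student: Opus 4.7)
The plan is to use the hint given in the paragraph immediately preceding the lemma statement: each singular orbit $\ell_j$ has nonzero algebraic intersection with any taut representative of a class in $\inter(\rr_+\hspace{-0.1cm}\cdot \face{F}) \cap H_2(N,\partial N;\zz)$. Translating this intersection statement into a statement about a cohomological pairing forces $[\ell_j]$ to be nontrivial in the torsion-free quotient $H_N$.

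Concretely, I would proceed as follows. First, pick any primitive integral class $\xi \in \inter(\rr_+\hspace{-0.1cm}\cdot \face{F}) \cap H^1(N;\zz)$, which exists because $\face{F}$ is a top-dimensional face with vertices in $H^1(N;\mathbb{Q})$. By Thurston's fibration theorem, $\xi$ is represented by a fibration $S \hookrightarrow N \to S^1$, whose monodromy $\psi$ is pseudo-Anosov and whose suspension flow $\Psi$ is (up to isotopy and reparametrisation) the flow canonically associated to $\face{F}$, by Fried's result already cited in the paper. The singular orbits $\ell_1,\ldots,\ell_k$ are by definition closed orbits of $\Psi$, so each $\ell_j$ is everywhere transverse to the fibre $S$ and meets $S$ in a positive number of points (each with positive local intersection sign, since the flow direction agrees with the coorientation of $S$).

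Second, I would interpret this geometric intersection number cohomologically. If $[S] \in H_2(N,\partial N;\zz)$ is the Poincar\'e--Lefschetz dual of $\xi$, then the algebraic intersection number of $\ell_j$ with $S$ equals $\xi([\ell_j])$, where here $\xi$ is viewed as the homomorphism $H_1(N;\zz) \to \zz$. By the previous paragraph, $\xi([\ell_j]) > 0$.

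Finally, suppose for contradiction that $[\ell_j]$ is trivial in $H_N = H_1(N;\zz)/\text{torsion}$. Then $[\ell_j]$ is a torsion element of $H_1(N;\zz)$, and any homomorphism from $H_1(N;\zz)$ to the torsion-free group $\zz$ must kill it. In particular $\xi([\ell_j]) = 0$, contradicting the conclusion of the previous step. Hence $[\ell_j] \neq 0$ in $H_N$, as required. There is no real obstacle here; the only thing to be slightly careful about is arranging the sign/orientation conventions so that each $\ell_j$ genuinely intersects $S$ positively rather than with a cancelling sign, but this follows from the fact that $\ell_j$ is an \emph{orbit} of the flow that suspends the fibration, so the intersection count equals the period of $\ell_j$ divided by the period of flow return to $S$, which is a positive integer.
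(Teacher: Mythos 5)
Your proof is correct and follows exactly the argument the paper sketches in the paragraph preceding the lemma: each singular orbit $\ell_j$ is everywhere transverse to any fibre $S$ over $\face{F}$, so the algebraic intersection $\xi([\ell_j])$ is a strictly positive integer for a fibred class $\xi$, and hence $[\ell_j]$ cannot be torsion in $H_1(N;\zz)$. The only slight imprecision is the final remark phrasing the intersection count as ``the period of $\ell_j$ divided by the period of flow return to $S$'' — the return time is not constant — but the correct underlying point (the count equals the positive degree of $\ell_j \to S^1$) is exactly what you use earlier in the argument, so nothing is affected.
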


Using Lemma \ref{lem:computing_Teich}, Corollary \ref{cor:trans_or_edge_or}, Lemma \ref{lem:sing_orb_nontrivial} and Theorem \ref{thm:main} we derive relations between the Teichm\"uller polynomial of $\face{F}$ and the Alexander polynomial of~$N$. 
\begin{corollary}\label{cor:Teich_in_one}
	Let $N$ be a compact, connected, oriented, hyperbolic 3-manifold with a fibred face $\face{F} \subset H^1(N;\rr)$ of the Thurston norm ball. Let $\mathrm{sing}(\face{F}) = \lbrace \ell_1, \ldots, \ell_k \rbrace$. Denote by~$\mathcal{L}$ the stable lamination of the suspension flow associated to $\face{F}$. Assume that the lamination  in~$N^{fab}$ induced by $\mathcal{L}$ is  transversely orientable.	
	Set  $s= b_1(N)$ and $M = N \bez \mathrm{sing}(\face{F})$.
	\begin{enumerate}[leftmargin=0.5cm, label={{\Roman*}}. ]
		\item $b_1(M) \geq 2$ and
		\begin{enumerate}[leftmargin=0.5cm]
			\item $s\geq 2$. Then
			\[\Theta_{\mathtt{F}}(h_1, \ldots, h_s) = \Delta_N (\sigma_N(h_1)\cdot h_1, \ldots, \sigma_N(h_s)\cdot h_s)\cdot \prod\limits_{j=1}^k (\lbrack \ell_j \rbrack - \sigma_N(\lbrack \ell_j \rbrack)).\]
			\item $s =1$. Let $h$ denote the generator of $H_N$.
			\begin{itemize}[leftmargin=0.5cm]
				\item If $\partial N \neq \emptyset$ then 
				\[\Theta_{\mathtt{F}}(h) = (h-\sigma_N( h ))^{-1}\cdot \Delta_N (\sigma_N(h)\cdot h) \prod\limits_{j=1}^k (\lbrack \ell_j \rbrack - \sigma_N(\lbrack \ell_j \rbrack)).\]
				\item If $N$ is closed then 
				\[\Theta_{\mathtt{F}}(h) = (h-\sigma_N( h ))^{-2}\cdot \Delta_N(\sigma_N(h)\cdot h) \cdot \prod\limits_{j=1}^k (\lbrack \ell_j \rbrack - \sigma_N(\lbrack \ell_j \rbrack)).\]
			\end{itemize}
		\end{enumerate}
		\item $b_1(M) = 1$. Let $h$ denote the generator of $H_N \cong H_M$.
		\begin{enumerate}
			\item If $\partial N \neq \emptyset$  then
			\[\Theta_{\mathtt{F}}(h) = \Delta_N(\sigma_N(h)\cdot h).\]
			\item If $N$ is closed set $\ell = \ell_1$, and then
			\[\pushQED{\qed}
			\Theta_{\mathtt{F}}(h) = (h-\sigma_N( h ))^{-1}(\lbrack \ell \rbrack - \sigma_N(\lbrack \ell \rbrack)) \cdot \Delta_N(\sigma_N(h)\cdot h).\qedhere
			\popQED\]
		\end{enumerate}
	\end{enumerate}
\end{corollary}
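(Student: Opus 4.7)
The plan is to deduce this corollary directly from Theorem \ref{thm:main} by checking that each of its hypotheses is satisfied in the fibred setting, and then translating the resulting formulas for $i_\ast(\Theta_{\mathcal{V}})$ into statements about $\Theta_{\mathtt{F}}$ via Lemma \ref{lem:computing_Teich}. So the proof is essentially a bookkeeping argument assembling the earlier results.

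First, I would fix the layered veering triangulation $\mathcal{V}$ of $M = N \setminus \mathrm{sing}(\face{F})$ associated to $\face{F}$ (this is well-defined by Agol's construction together with \cite[Proposition 2.7]{MinskyTaylor}), and let $i\colon M\hookrightarrow N$ be the inclusion, with induced map $i_\ast\colon H_M \twoheadrightarrow H_N$. The singular orbits $\ell_1,\dots,\ell_k$ of the suspension flow are precisely the core curves of the filling solid tori for the Dehn filling that recovers $N$ from (the compact core of) $M$, so Theorem \ref{thm:main} applies \emph{provided} its two hypotheses hold.

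Next I would verify those hypotheses. The assumption that the lamination $\mathcal{L}^N \subset N^{fab}$ induced by $\mathcal{L}$ is transversely orientable is, by Corollary \ref{cor:trans_or_edge_or}, exactly equivalent to edge-orientability of the veering triangulation $\mathcal{V}^N$ of the intermediate cover $M^N$. This is the first hypothesis of Theorem \ref{thm:main}, and it also produces the factoring homomorphism $\sigma_N\colon H_N \to \{-1,1\}$ via Lemma \ref{lem:edge-orient-factors}. The nontriviality of each $[\ell_j] \in H_N$, which is the second hypothesis, is exactly Lemma \ref{lem:sing_orb_nontrivial}: in the fibred setting the singular orbits have nonzero algebraic intersection with any fibre class from $\mathrm{int}(\rr_+\hspace{-0.1cm}\cdot \face{F})$, so their homology classes are nontrivial.

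With both hypotheses of Theorem \ref{thm:main} in place, I would invoke the theorem case by case (I vs.\ II, and the sub-cases depending on $s = b_1(N)$ and on whether $\partial N = \emptyset$), producing in each case the stated formula for $i_\ast(\Theta_{\mathcal{V}})$ in terms of $\Delta_N$ and the factors $[\ell_j] - \sigma_N([\ell_j])$. Finally, applying Lemma \ref{lem:computing_Teich}, which gives the equality $\Theta_{\mathtt{F}} = i_\ast(\Theta_{\mathcal{V}})$, converts each of these into the desired identity for $\Theta_{\mathtt{F}}$. There is no real obstacle: the only thing to be careful about is matching up the case distinctions of Theorem \ref{thm:main} one-to-one with those of the corollary, and checking that the $s=1$, $\partial N = \emptyset$ sub-cases and the $b_1(M) = 1$ sub-cases are reproduced verbatim; this is purely a transcription, so the proof reduces to a single sentence per case.
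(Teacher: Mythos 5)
Your proposal is correct and follows exactly the same route as the paper: translate the transverse-orientability hypothesis into edge-orientability of $\mathcal{V}^N$ via Corollary \ref{cor:trans_or_edge_or}, verify nontriviality of the $[\ell_j]$ via Lemma \ref{lem:sing_orb_nontrivial}, apply Theorem \ref{thm:main}, and then identify $\Theta_{\mathtt{F}} = i_\ast(\Theta_{\mathcal{V}})$ by Lemma \ref{lem:computing_Teich}. The paper states this as an immediate consequence of precisely these four results, so your bookkeeping argument is the intended proof.
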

 
In \cite{McMullen_Alex} McMullen used the Alexander polynomial to define the \emph{Alexander norm} on $H^1(N;\rr)$. Suppose that $\Delta_N = \sum\limits_{h \in H_N} a_h \cdot h$. Then the Alexander norm of $\xi \in H^1(N;\rr)$ is given by 
\[\Anorm{\xi} = \sup \lbrace \xi(h) - \xi(g) \ | \ a_ha_g \neq 0\rbrace.\] 
%Since the Alexander polynomial is symmetric \cite[Corollary 4.5]{Turaev_Alex}, its Newton polytope, that is the convex hull in $H_1(N;\rr)$ of $h \in H_N$ with $a_h \neq 0$, can be translated in $H_1(N;\rr)$ so that it is symmetric with respect to the origin. We denote this polytope centred at the origin by $\mathcal{N}(\Delta_N)$.
The unit norm ball of the Alexander norm is equal to the polar dual of  the Newton polytope of $\Delta_N$ scaled down by a factor of 2.
McMullen proved that when \mbox{$b_1(N)>1$} the Alexander norm is a lower bound for the Thurston norm \cite[Theorem~1.1]{McMullen_Alex}. Equality holds on cohomology classes corresponding to fibrations of~$N$  over the circle. Thus every fibred face of the Thurston norm ball is contained in a face of the Alexander norm ball. Dunfield showed that the link complement L10n14 has a fibred face which is only properly contained in a face of the Alexander norm ball \cite[Section~6]{Dunfield_Alex}. It was then proved by McMullen that a fibred face is equal to a face of the Alexander norm ball if the stable lamination of the flow associated to the face is transversely orientable \cite[Theorem~7.1]{McMullen_Teich}. Corollary \ref{cor:Teich_in_one} implies that in fact it is enough to assume transverse orientability of  the preimage of that lamination in the maximal free abelian cover of~$N$.
\begin{corollary}\label{cor:faces:equal}
	Let $N$ be a compact, oriented, hyperbolic 3-manifold with a fibred face $\face{F}$ of the Thurston norm ball in $H^1(N;\rr)$. Let $\mathcal{L}$ be  the stable lamination of the suspension flow associated to $\face{F}$. If the lamination induced by $\mathcal{L}$ in the maximal free abelian cover of $N$ is transversely orientable then there is a face $\face{A}$ of the Alexander norm ball such that $\face{A} = \face{F}$. 
\end{corollary}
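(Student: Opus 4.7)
The plan is to translate Corollary \ref{cor:Teich_in_one} into a Newton polytope identity and then invoke the face-equality half of McMullen's \cite[Theorem 7.1]{McMullen_Teich}. The $b_1(N)=1$ cases are essentially trivial because the Alexander norm ball then has zero-dimensional faces, so $\face{F}$ automatically equals a face of the Alexander norm ball; so I focus on $b_1(N)\geq 2$.

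By Corollary \ref{cor:trans_or_edge_or}, transverse orientability of $\mathcal{L}^N$ in $N^{fab}$ is equivalent to edge-orientability of $\mathcal{V}^N$, which allows Corollary \ref{cor:Teich_in_one} to apply. Its Case I(a) gives
\[\Theta_{\mathtt{F}}(h_1, \ldots, h_s) = \Delta_N(\sigma_N(h_1)\cdot h_1, \ldots, \sigma_N(h_s)\cdot h_s) \cdot \prod_{j=1}^k(\lbrack \ell_j \rbrack - \sigma_N(\lbrack \ell_j \rbrack)).\]
The substitutions $h_i \mapsto \sigma_N(h_i)\cdot h_i$ and the coefficient sign flips leave Newton polytopes unchanged, and each factor $(\lbrack \ell_j \rbrack - \sigma_N(\lbrack \ell_j \rbrack))$ has Newton polytope the segment $[0, \lbrack \ell_j \rbrack]$; hence
\[N(\Theta_{\mathtt{F}}) = N(\Delta_N) + \sum_{j=1}^k [0, \lbrack \ell_j \rbrack].\]
This is precisely the Minkowski decomposition McMullen extracts in his proof of \cite[Theorem 7.1]{McMullen_Teich} from the divisibility $\Delta_N \mid \Theta_{\mathtt{F}}$ under the stronger hypothesis that $\mathcal{L}$ itself is transversely orientable.

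With the same decomposition in hand, McMullen's face-equality argument applies verbatim. His width formula for the Thurston norm on the fibred cone, together with Lemma \ref{lem:sing_orb_nontrivial} (giving $\xi(\lbrack \ell_j \rbrack)>0$ for $\xi \in \rr_+\hspace{-0.1cm}\cdot \face{F}$), yields $\Thnorm{\xi} = \Anorm{\xi}$ on the fibred cone, so Theorem 1.1 of \cite{McMullen_Alex} places $\face{F}$ inside a top-dimensional face $\face{A}$ of the Alexander norm ball sharing its supporting hyperplane. The Minkowski summand structure identifies the dual vertex of $\face{A}$ in the Alexander polytope with the dual vertex of $\face{F}$ in the Thurston polytope as the common point $v_\Delta^+ - v_\Delta^-$; the segment summands $[0, \lbrack \ell_j \rbrack]$ contribute walls only along the hyperplanes $\xi(\lbrack \ell_j \rbrack)=0$, which already bound the fibred cone. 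Hence the dual cones of $\face{A}$ and $\face{F}$ coincide, and $\face{F} = \face{A}$.

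The main obstacle, and the only new point compared with McMullen's proof, is checking that the twist by $\sigma_N$ does not obstruct the Newton-polytope analysis; this is automatic because Newton polytopes depend only on the support of a polynomial and are invariant under coefficient sign changes.
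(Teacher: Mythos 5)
Your starting point matches the paper's: pass from the product formula in Corollary \ref{cor:Teich_in_one} to a Newton-polytope identity, using that sign twists leave supports unchanged. But the route from there diverges. The paper cites McMullen's Theorem A.1(C) from \cite{McMullen_Teich} — an abstract statement about polytope norms and houses of specialisations — to conclude that $\rr_+\hspace{-0.1cm}\cdot\face{F}$ coincides with the cone on a face of the Alexander norm ball, and then closes with \cite[Theorem~1.1]{McMullen_Alex}. You instead attempt to re-run the argument of McMullen's Theorem~7.1 by hand via the Minkowski decomposition $N(\Theta_{\mathtt F}) = N(\Delta_N) + \sum_j[0,[\ell_j]]$ and a width/dual-vertex argument. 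The spirit is right, but as written the crucial step is not a proof: ``McMullen's face-equality argument applies verbatim'' together with an unexplained identification of ``the dual vertex $v_\Delta^+-v_\Delta^-$'' is an appeal, not a verification. What needs checking — and what Theorem~A.1(C) supplies in the paper — is that the cone of classes on which the house of $\xi(\Delta_N^\sigma)$ achieves the stretch factor is exactly the cone dual to a single face of $N(\Delta_N)$, and that this cone contains $\rr_+\hspace{-0.1cm}\cdot\face{F}$ as a full-dimensional subcone rather than a proper subcone.

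There is also a concrete error: your dismissal of $b_1(N)=1$ is not valid. When $b_1(N)=1$ both norm balls are intervals, but their endpoints sit at $\pm 1/\Thnorm{h}$ and $\pm 1/\Anorm{h}$ respectively, which are different points unless the two norms agree on a generator — and $\Anorm{\cdot}\le\Thnorm{\cdot}$ with equality on fibred classes from \cite[Theorem~1.1]{McMullen_Alex} is only stated for $b_1\ge 2$; for $b_1=1$ McMullen's inequality carries additive correction terms. So ``the Alexander norm ball has zero-dimensional faces'' does not make $\face{F}$ automatically a face of it. If the corollary is to cover $b_1(N)=1$ at all (the hypothesis does not exclude it), this case needs the formulas of Case~II of Corollary~\ref{cor:Teich_in_one}, not a generic dimension count.

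In short: same first move as the paper, but the second half of your argument substitutes an unverified reconstruction of McMullen's proof for the single citation the paper uses, and the $b_1(N)=1$ shortcut is incorrect.
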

\begin{proof}
	It follows from Corollary \ref{cor:Teich_in_one} that the stretch factors of monodromies of fibrations lying over~$\face{F}$ can be computed by specialising 
	$\Delta_N (\sigma_N(h_1)\cdot h_1, \ldots, \sigma_N(h_s)\cdot h_s)$. Since the Newton polytopes of $\Delta_N$ and $\Delta_N (\sigma_N(h_1)\cdot h_1, \ldots, \sigma_N(h_s)\cdot h_s)$ are equal, Theorem A.1(C) of \cite{McMullen_Teich} implies that $\rr_+\hspace{-0.1cm}\cdot \face{F}$ is equal to the cone on some face $\face{A}$ of the Alexander norm ball in $H^1(N;\rr)$. Since by \cite[Theorem~1.1]{McMullen_Alex} we also have $\face{F} \subset \face{A}$, we must have $\face{F} = \face{A}$.
\end{proof}

\begin{remark}
	A fibred face of the Thurston norm ball can be equal to a face of the Alexander norm ball even if the associated lamination in the maximal free abelian cover is not transversely orientable. For instance, let $\mathcal{V}$ be the veering triangulation which in the Veering Census \cite{VeeringCensus} is described by the string \begin{center}\texttt{oLLLLLPwQQcccefgijlmkklnnnlnewbnetafobnkj\_12001112122200}.\end{center} 
	
	Then $\mathcal{V}$ is layered, $\Vab$ is not edge-orientable and
	 \begin{align*}
			\Theta_\mathcal{V} &=  a^7b - a^6b^2 - a^5b^3 + a^4b^4 - a^6b - 2a^5b^2 + 2a^4b^3 + 2a^3b^4 - ab^6 +\\&- a^6 + 2a^4b^2 + 2a^3b^3 - 2a^2b^4 - ab^5 + a^3b^2 - a^2b^3 - ab^4 + b^5\\
			\Delta_M &=	a^7b + a^6b^2 + a^5b^3 + a^4b^4 + a^6b + 2a^4b^3 + 2a^3b^4 + 2a^2b^5 + ab^6 +\\&+ a^6 + 2a^5b + 2a^4b^2 + 2a^3b^3 + ab^5 + a^3b^2 + a^2b^3 + ab^4 + b^5.
		\end{align*}
One can check that $\Theta_{\mathcal{V}}$ and $\Delta_M$ have the same Newton polytopes. Theorem 6.1 of \cite{McMullen_Teich} then implies that the fibred face determined by $\mathcal{V}$ is equal to a face of the Alexander norm ball. %	Note that $\Theta_{\mathcal{V}}$ and $\Delta_M$ do not satisfy the formula from Corollary \ref{cor:Alex=taut}, because $\Vab$ is not edge-orientable. %Let $a^\ast$, $b^\ast$ denote the classes in $H^1(M;\rr)$ dual to $a$, $b$, respectively. The fibred face $\face{F}$ of the Thurston norm ball in $H^1(M;\rr)$ determined by $\mathcal{V}$ is spanned by the rays passing through $a^\ast - b^\ast$ and $a^\ast + b^\ast$.
%	Since the Newton polytopes of $\Theta_{\mathcal{V}}$ and $\Delta_M$ are equal, the fibred face~$\face{F}$ determined by $\mathcal{V}$ is equal to a face of the Alexander norm ball. 
	%The vertices of~$\face{F}$ are
%\[\left(\frac{1}{3}, \frac{1}{3}\right),\left(\frac{1}{11},-\frac{1}{11}\right).\] 
\end{remark}
\subsection{Orientability of invariant laminations in the fibre}\label{subsec:orientability}
One of the consequences of Corollary \ref{cor:Teich_in_one} is that the Teichm\"uller polynomials of distinct fibred faces of the same manifold are often almost the same.
\begin{corollary}\label{cor:teich_polys_of_faces}
	Let $N$ be a compact, oriented, hyperbolic 3-manifold which is fibred over the circle. Let $\face{F}_1$, $\face{F}_2$ be two fibred faces of the Thurston norm ball in $H^1(N;\rr)$. For $i=1,2$ denote by~$\mathcal{L}_i$ the lamination associated to $\face{F}_i$ and by $\Theta_i$ the Teichm\"uller polynomial of $\face{F}_i$. If the induced laminations $\mathcal{L}_1^N, \mathcal{L}_2^N$ in $N^{fab}$ are transversely orientable then 
	\[\Theta_1(h_1, \ldots, h_s) = P(h_1, \ldots, h_s) \cdot \Theta_2(\pm h_1, \ldots, \pm h_s)\]
	where $P$ is a product of factors of the form $(h\pm1)$ and $(h\pm1)^{-1}$,  $h \in H_N$. \qed
\end{corollary}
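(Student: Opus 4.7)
The plan is to express both Teichm\"uller polynomials in terms of the \emph{same} Alexander polynomial $\Delta_N$ via Corollary~\ref{cor:Teich_in_one}, and then take the ratio. The hypothesis that both induced laminations $\mathcal{L}_i^N \subset N^{fab}$ are transversely orientable gives, through Corollary~\ref{cor:trans_or_edge_or}, edge-orientability of the associated veering triangulations, so Corollary~\ref{cor:Teich_in_one} applies to each face. This yields, for $i=1,2$, an expression
\[
\Theta_i(h_1,\ldots,h_s) = \Delta_N\bigl(\sigma_{N,i}(h_1)\cdot h_1,\ldots,\sigma_{N,i}(h_s)\cdot h_s\bigr)\cdot P_i(h_1,\ldots,h_s),
\]
where inspection of the various cases of that corollary shows that each $P_i$ is a product of factors of the required shape $(g\pm 1)^{\pm 1}$ with $g\in H_N$: the terms $[\ell_j] - \sigma_{N,i}([\ell_j])$ together with the possible $(h - \sigma_{N,i}(h))^{-1}$ or $(h - \sigma_{N,i}(h))^{-2}$ prefactors.

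The central step is to match the two Alexander factors. I would set $\epsilon_j := \sigma_{N,1}(h_j)\sigma_{N,2}(h_j)\in\{-1,1\}$ and substitute $h_j\mapsto\epsilon_j h_j$ in the formula for $\Theta_2$. Since each $\sigma_{N,i}(h_j)$ is a scalar independent of the variables, and $\sigma_{N,2}(h_j)\cdot\epsilon_j = \sigma_{N,1}(h_j)$, the Alexander-polynomial factor in $\Theta_2(\epsilon_1 h_1,\ldots,\epsilon_s h_s)$ becomes exactly $\Delta_N\bigl(\sigma_{N,1}(h_1)\cdot h_1,\ldots,\sigma_{N,1}(h_s)\cdot h_s\bigr)$, i.e.\ the Alexander factor of $\Theta_1$. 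Dividing then yields
\[
\Theta_1(h_1,\ldots,h_s) = \Theta_2(\epsilon_1 h_1,\ldots,\epsilon_s h_s) \cdot \frac{P_1(h_1,\ldots,h_s)}{P_2(\epsilon_1 h_1,\ldots,\epsilon_s h_s)},
\]
so it suffices to take $P := P_1 / P_2(\epsilon_\bullet h_\bullet)$ and check that $P$ retains the required shape.

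The last verification is routine: sign substitution carries each factor $g - \sigma$ (with $g\in H_N$, $\sigma\in\{-1,1\}$) to $\pm(g'\pm 1)$ for some $g'\in H_N$, and the leading sign is a unit in $\zz[H_N]$; so $P$ is again a product of factors $(h\pm 1)^{\pm 1}$. I expect no substantive obstacle here. The heavy lifting is entirely done by Corollary~\ref{cor:Teich_in_one}, and the only mildly delicate point is the sign-pattern cancellation above, which relies crucially on the scalars $\sigma_{N,i}(h_j)$ being constants in $\{-1,1\}$ rather than polynomial expressions.
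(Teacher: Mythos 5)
Your proof is correct and follows the route the paper implicitly intends: apply Corollary~\ref{cor:Teich_in_one} to both fibred faces (justified via Corollary~\ref{cor:trans_or_edge_or}), substitute $h_j \mapsto \sigma_{N,1}(h_j)\sigma_{N,2}(h_j)\,h_j$ to align the two Alexander factors, and observe that the leftover ratio of auxiliary factors is still a product of terms $(g\pm1)^{\pm1}$. The paper gives no explicit proof (the corollary is marked $\qed$), but the surrounding discussion makes clear this is exactly the intended argument, and your only addition is to spell out the sign-bookkeeping, which you handle correctly — in particular the key point that the signs $\sigma_{N,i}(h_j)$ are constants in $\{-1,1\}$ rather than variables, and that the different sets $\mathrm{sing}(\face{F}_1)$, $\mathrm{sing}(\face{F}_2)$ simply change which auxiliary factors appear, not the shape of the conclusion.
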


This corollary is not surprising. 
Let $\xi \in H^1(N;\zz)$ be a primitive integral cohomology class from the interior of the cone over a fibred face~$\face{F}$. Let $\psi$ denote the monodromy of the corresponding fibration. By \cite[Theorem 5.1]{McMullen_Teich}, the largest real root of $\xi(\Theta_{\texttt{F}})$ is equal to the stretch factor of $\psi$.
On the other hand, the largest in the absolute value real root of $\xi(\Delta_N)$ is equal to the homological stretch factor of~$\psi$ \cite[Assertion~4]{Milnor_covers}. There is an easy criterion which tells when these two numerical invariants of $\psi$ are equal.
\begin{theorem}[Lemma 4.3 of \cite{BandBoyland}] \label{thm:orientable:lam}
	Let $\psi: S\rightarrow S$ be a pseudo-Anosov homeomorphism of an orientable surface. The stretch factor and the homological stretch factor of $\psi$ are equal if and only if the invariant laminations of $\psi$ are orientable.\qed
\end{theorem}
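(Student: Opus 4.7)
The plan is to compare the action of $\psi$ on $H_1(S;\mathbb{R})$ with its action on the space of transverse measures of the invariant foliations, by way of a $\psi$-invariant train track carrying the stable lamination. Recall that $\psi$ induces a map on such a train track $\tau$ whose non-negative transition matrix $M$ acting on the branches has Perron-Frobenius eigenvalue $\lambda$, and that the action $\psi_\ast$ on $H_1(S;\mathbb{R})$ factors through the induced map on $H_1(\tau;\mathbb{R})$. This yields the inequality that the homological stretch factor is at most $\lambda$ for free; the whole game is to decide when equality occurs.

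For the ``if'' direction, assume the stable and unstable foliations $\mathcal{F}^s$, $\mathcal{F}^u$ are transversely orientable. Then the transverse measures $\mu^s, \mu^u$ define honest closed $1$-forms $\omega^s, \omega^u$ (smooth off the singularities) whose de Rham classes $[\omega^s], [\omega^u]$ lie in $H^1(S;\mathbb{R})$ and satisfy $\psi^\ast[\omega^u] = \lambda\,[\omega^u]$ and $\psi^\ast[\omega^s] = \lambda^{-1}[\omega^s]$ because $\psi$ scales the measures by $\lambda^{\pm 1}$. These classes are nonzero and linearly independent since their cup product gives the area class of the associated quadratic differential. Hence $\lambda$ is an eigenvalue of $\psi^\ast$ on $H^1(S;\mathbb{R})$, and by Poincar\'e duality also of $\psi_\ast$ on $H_1(S;\mathbb{R})$, which proves equality.

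For the ``only if'' direction, assume the laminations are not orientable, and pass to the orientation double cover $p: \tilde S \to S$ of $\mathcal{F}^s$. The lift $\tilde\psi$ is pseudo-Anosov with the same stretch factor $\lambda$ and with orientable invariant foliations, so by the previous paragraph there exist eigenclasses $[\tilde\omega^s], [\tilde\omega^u] \in H^1(\tilde S;\mathbb{R})$ of $\tilde\psi^\ast$ with eigenvalues $\lambda^{\pm 1}$. Because the deck involution $\iota$ reverses the chosen transverse orientation of the lifted foliations, we have $\iota^\ast [\tilde\omega^{s/u}] = -[\tilde\omega^{s/u}]$, so these classes lie in the $(-1)$-eigenspace of $\iota^\ast$. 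But $H^1(S;\mathbb{R})$ embeds as the $(+1)$-eigenspace $H^1(\tilde S;\mathbb{R})^\iota$ via $p^\ast$, so $\lambda$ is not obtained as an eigenvalue of $\psi^\ast$ from this construction.

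The main obstacle is completing the ``only if'' direction: we still need to rule out that some \emph{other} eigenvalue of $\psi_\ast$ on $H_1(S;\mathbb{R})$ has modulus $\geq \lambda$. The strategy is to use the train track $\tau$ to model the action. Choosing an arbitrary orientation of each branch of $\tau$, the induced action on $\mathbb{Z}^{E(\tau)}$ is given by a signed integer matrix $\widetilde M$ obtained from $M$ by flipping the sign of some entries precisely along the loops where the transverse orientation of $\mathcal{F}^s$ fails to close up. The claim that the homological stretch factor is strictly less than $\lambda$ reduces to a Perron--Frobenius--type comparison: the spectral radius of $\widetilde M$ is strictly smaller than that of $M$ provided at least one essential sign reversal occurs, which is exactly the non-orientability hypothesis. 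This strict comparison --- essentially Lemma~4.3 of Band--Boyland --- is the technical heart of the argument, relying on the fact that the Perron eigenvector of $M$ is strictly positive and cannot be matched by any signed eigenvector of $\widetilde M$ of the same spectral radius without forcing the sign pattern to be trivial in $H_1$.
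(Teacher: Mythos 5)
The paper does not actually prove this statement --- it cites it as Lemma~4.3 of Band--Boyland and ends with \texttt{\textbackslash qed}, so there is no ``paper proof'' to compare against. Your task is therefore to produce a self-contained argument, and you have honestly flagged where yours is incomplete; let me sharpen what is still missing.

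Your ``if'' direction is correct and standard: when $\mathcal{F}^s,\mathcal{F}^u$ are (transversely) orientable, the transverse measures give closed $1$-forms whose de Rham classes are eigenvectors of $\psi^\ast$ with eigenvalues $\lambda^{\mp 1}$, and the a priori inequality (homological stretch factor $\leq \lambda$) then forces equality. Your ``only if'' direction has two parts, and both are incomplete in ways that are not merely technicalities. First, the double-cover argument only shows that the \emph{specific} eigenclasses $[\tilde\omega^{s/u}]$ are anti-invariant under the deck involution and hence do not descend; as you note, it says nothing about the rest of the spectrum of $\psi_\ast$ on $H_1(S;\mathbb{R})$, so it does not by itself yield strict inequality. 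Second, your fallback via the signed transition matrix $\widetilde M$ is stated in a way that glosses over two nontrivial points. (i) $\widetilde M$ acts on $C_1(\tau;\mathbb{R})\cong \mathbb{R}^{E(\tau)}$, not on $H_1(S;\mathbb{R})$; one needs the chain of inequalities $\rho(\psi_\ast|_{H_1(S;\mathbb{R})}) \leq \rho(\psi_\ast|_{H_1(\tau;\mathbb{R})}) \leq \rho(\widetilde M)$, which uses that the invariant train track fills $S$ so that $H_1(\tau;\mathbb{R})\to H_1(S;\mathbb{R})$ is onto, and that $Z_1(\tau;\mathbb{R})\subset C_1(\tau;\mathbb{R})$ is $\widetilde M$-invariant. (ii) The Perron--Frobenius comparison you invoke is Wielandt's theorem: since $|\widetilde M| = M$ entrywise with $M$ primitive, $\rho(\widetilde M)\leq \rho(M)$ with equality iff $\widetilde M = \pm D M D^{-1}$ for a diagonal sign matrix $D$. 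But translating ``$\widetilde M = \pm D M D^{-1}$'' into ``the lamination carried by $\tau$ is orientable'' is a genuine combinatorial step: one must show that the reorientation by $D$ produces branch orientations that are coherent at every switch, i.e.\ that they descend to a leafwise orientation of the carried lamination, and conversely. Your description of $\widetilde M$ as having signs flipped ``along the loops where the transverse orientation of $\mathcal{F}^s$ fails to close up'' conflates two different things: the sign pattern of $\widetilde M$ depends on the chosen branch orientations \emph{and} on whether $\psi$ preserves them, whereas orientability of the lamination is a property of $\tau$ alone. Making that translation precise is exactly the content of Band--Boyland's Lemma~4.3, so at present your argument still presupposes the result it is meant to establish.
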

It is therefore clear that when the lamination $\mathcal{L}$ associated to a fibred face $\face{F}$ is transversely orientable, then the Teichm\"uller polynomial of $\face{F}$ and the Alexander polynomial of $N$ have to be very tightly related; for all cohomology classes \mbox{$\xi \in \mathrm{int}(\rr_+\hspace{-0.1cm}\cdot \face{F})\cap H^1(N;\zz)$} the largest real roots of their specialisations at $\xi$ have to be equal up to the sign. 

When the lamination $\mathcal{L}$ is not transversely orientable it is still possible that its intersection with a given fibre is an orientable 1-dimensional lamination. 
For brevity we say that cohomology classes determining fibrations with this property are \emph{orientable}. %In this case the monodromy of the fibration reverses the orientation of the invariant laminations in the fibre.

\begin{definition}
	Let $\face{F}$ be a fibred face of the Thurston norm ball in $H^1(N;\rr)$. Let $\mathcal{L}$ be the stable lamination of the suspension flow associated to $\face{F}$. Let $\xi \in \inter(\rr_+\hspace{-0.1cm}\cdot \face{F})\cap H^1(N;\zz)$. Denote by $S$ the unique taut representative of the Poincar\'e-Lefschetz dual of $\xi$. We say that the fibred class $\xi$ is \emph{orientable} if the 1-dimensional lamination $S \cap \mathcal{L}$ in $S$ is orientable. 
\end{definition}

Corollary \ref{cor:Teich_in_one} gives a formula relating $\Theta_{\mathtt{F}}$ and $\Delta_N$ in the case when the lamination associated to $\face{F}$ induces a transversely orientable lamination in the maximal free abelian cover of $N$. We will show that transverse orientability of this lamination is in fact equivalent to the existence of an orientable fibred class in $\rr_+\hspace{-0.1cm}\cdot \face{F}$. 

\begin{theorem}\label{thm:characterisation}
	Let $N$ be a compact, oriented, hyperbolic 3-manifold with a fibred face $\face{F}$ of the Thurston norm ball in $H^1(N;\rr)$. Let $\mathcal{L}$ be  the stable lamination of the suspension flow associated to $\face{F}$. If the lamination induced by $\mathcal{L}$ in the maximal free abelian cover of $N$ is transversely orientable then there is an orientable fibred class in the interior of $\rr_+ \hspace{-0.1cm}\cdot \face{F}$.
\end{theorem}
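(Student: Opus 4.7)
The plan is to split the proof into two cases depending on whether $\mathcal{L}$ itself is transversely orientable in $N$. The easy case is immediate: if $\mathcal{L}$ is transversely orientable on $N$, then the transverse orientation restricts along any fibre $S$ to a transverse orientation of the $1$-dimensional lamination $S\cap\mathcal{L}$, hence every primitive integral class in $\inter(\rr_+\cdot\face{F})$ is orientable, and the existence of such classes is built into the definition of a fibred face. So I may assume from now on that $\mathcal{L}$ is \emph{not} transversely orientable in $N$, while its preimage in $N^{fab}$ still is.

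Set $M=N\setminus\mathrm{sing}(\face{F})$. Corollary~\ref{cor:trans_or_edge_or} translates transverse orientability of $\mathcal{L}^N\subset N^{fab}$ into edge-orientability of the veering triangulation $\mathcal{V}^N$ of $M^N$, so by Lemma~\ref{lem:edge-orient-factors} the edge-orientation homomorphism $\omega$ of $\mathcal{V}$ factors through a homomorphism $\sigma_N\colon H_N\to\{-1,1\}$. Since $\mathcal{V}$ itself is not edge-orientable, $\omega$, and therefore $\sigma_N$, is nontrivial. Corollary~\ref{cor:orientable:classes} now characterises the orientable primitive classes in $\inter(\rr_+\cdot\face{F})\cap H^1(N;\zz)$ as precisely those whose reduction modulo $2$ equals the class $\bar\alpha\in H^1(N;\zz/2)$ obtained from $\sigma_N$ under the identification $\{-1,1\}\cong\zz/2$. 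Proving the theorem therefore reduces to exhibiting a primitive integral class in the open fibred cone with this prescribed mod-$2$ reduction.

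To produce one, I use that $H_N$ is free abelian to lift $\sigma_N$ to an integral class $\tilde\sigma_N\in H^1(N;\zz)$ with $\tilde\sigma_N\equiv\bar\alpha\pmod 2$, and I pick any primitive fibred class $\xi_0\in\inter(\rr_+\cdot\face{F})\cap H^1(N;\zz)$. For a large integer $k$ I set $\xi_k=\tilde\sigma_N+2k\xi_0$. Openness of the fibred cone, together with $\xi_k/(2k)\to\xi_0$, ensures $\xi_k\in\inter(\rr_+\cdot\face{F})$ for all sufficiently large $k$; by construction $\xi_k\equiv\bar\alpha\pmod 2$. Letting $d$ be the gcd of the coordinates of $\xi_k$ in any integral basis of $H^1(N;\zz)$, nontriviality of $\sigma_N$ gives $\bar\alpha\neq 0$, so $\tilde\sigma_N$ and hence $\xi_k$ has at least one odd coordinate, forcing $d$ to be odd. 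The class $\xi=\xi_k/d$ is then primitive, still lies in the open cone, and because $d$ is invertible modulo $2$ still satisfies $\xi\equiv\bar\alpha\pmod 2$. Corollary~\ref{cor:orientable:classes} identifies $\xi$ as the desired orientable fibred class.

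The delicate point -- and the one I expect to require the most care -- is the last step: Corollary~\ref{cor:orientable:classes} is phrased only for \emph{primitive} classes, so I must not merely find an integral class with the prescribed mod-$2$ reduction, but a primitive one. This is exactly where the nontriviality of $\sigma_N$ is used: it forces the gcd of any class representing $\bar\alpha$ to be odd, which is what allows dividing by the gcd without destroying the parity condition.
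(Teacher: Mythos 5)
Your proposal is correct and follows the same route as the paper, namely reducing (after the easy case where $\mathcal{L}$ is already transversely orientable) to the mod-$2$ parity characterisation of orientable primitive classes given in Corollary~\ref{cor:orientable:classes}. The one substantive thing you add is the explicit construction behind the final ``in particular, \dots there are orientable fibred classes'' of Corollary~\ref{cor:orientable:classes}, which the paper treats as immediate: lifting $\sigma_N$ to $\tilde\sigma_N\in H^1(N;\zz)$, pushing $\tilde\sigma_N+2k\xi_0$ into the open cone for large $k$, and then dividing by the gcd~$d$ --- correctly observing that $d$ must be odd because $\sigma_N\neq 1$ forces some coordinate of $\tilde\sigma_N$ to be odd, so that passing to the primitive multiple preserves the prescribed mod-$2$ residue. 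This fills in a genuine (if small) gap that a careful reader would want to see verified.
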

This theorem follows immediately from Corollary \ref{cor:orientable:classes} and Proposition \ref{prop:no:orientable:class} that we prove below.
 First note that if $N$ is fibred over the circle with fibre $S$ and monodromy~$\psi$ then
\begin{equation}\label{eqn:H_1N:splits}
	H_1(N;\zz) = \left(\bigslant{H_1(S;\zz)}{\lbrace \gamma = \psi_\ast(\gamma)\rbrace}\right) \oplus \zz.\end{equation}
Let $H_1(S;\zz)_\psi$ denote $\bigslant{H_1(S;\zz)}{\lbrace \gamma = \psi_\ast(\gamma)\rbrace}$ and let $H_S^\psi$ denote its torsion-free part. Then we have 
\begin{equation}\label{eqn:HN:splitting}
	H_N = H_S^\psi \oplus \zz.\end{equation}
\begin{proposition}\label{prop:less:useful:orientable:classes}
	Let~$\face{F}$ be a fibred face of the Thurston norm ball in $H^1(N;\rr)$. Let $\mathcal{L}$ be the stable lamination of the suspension flow associated to $\face{F}$. If $\mathcal{L}$ is not transversely orientable, but the induced lamination $\mathcal{L}^N \subset N^{fab}$ is transversely orientable,	 then  a primitive class  $\xi \in \inter(\rr_+\hspace{-0.1cm}\cdot \face{F}) \cap H^1(N;\zz)$ is orientable if and only if for every $h \in H_N$
	\begin{equation}\label{cond:orientable:all}
		\xi(h) = \begin{cases}
			\text{odd} &\text{if }\sigma_N(h) = -1\\
			\text{even} &\text{if } \sigma_N(h) = 1
		\end{cases}.\end{equation}
\end{proposition}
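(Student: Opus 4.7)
The plan is to characterise orientability of the 1-dimensional lamination $\lambda_\xi = \mathcal{L} \cap S$ by passing to the infinite cyclic cover $N_\xi \to N$ determined by $\xi\colon \pi_1(N) \to \zz$. Since $\xi$ lies in the interior of the cone over the fibred face $\face{F}$, the induced fibration has taut fibre $S$ and pseudo-Anosov monodromy $\psi$ with invariant lamination $\lambda_\xi$. Because $\xi$ is primitive, $N_\xi$ admits a product decomposition $N_\xi = S \times \rr$ in which the preimage of $\mathcal{L}$ is the $\psi$-invariant product lamination $\lambda_\xi \times \rr$ (using that $\psi$ preserves $\lambda_\xi$). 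As $S$ is orientable, transverse orientability of $\lambda_\xi \times \rr$ in $N_\xi$ is equivalent to orientability of $\lambda_\xi$ in $S$. Hence $\xi$ is orientable if and only if the pullback of $\mathcal{L}$ to $N_\xi$ is transversely orientable.

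Next, I would translate this into a condition on fundamental groups. The hypothesis that $\mathcal{L}^N \subset N^{fab}$ is transversely orientable, combined with Corollary \ref{cor:trans_or_edge_or} and Lemma \ref{lem:edge-orient-factors}, says that the edge-orientation homomorphism $\omega\colon \pi_1(M) \to \{-1, 1\}$ factors through $\sigma_N\colon H_N \to \{-1, 1\}$; the double cover $N^\sigma \to N$ classified by $\sigma_N$ is then the minimal connected cover of $N$ on which $\mathcal{L}$ becomes transversely orientable (by Lemma \ref{lemma:EO:to} applied to the lifted flow). It follows that the pullback of $\mathcal{L}$ to any regular cover $\widetilde{N} \to N$ is transversely orientable precisely when the image of $\pi_1(\widetilde{N})$ in $H_N$ lies inside $\ker \sigma_N$. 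Applying this to $\widetilde{N} = N_\xi$, whose fundamental group projects onto $\ker(\xi\colon H_N \to \zz)$, orientability of $\xi$ becomes equivalent to $\ker \xi \subseteq \ker \sigma_N$, i.e.\ to $\sigma_N$ factoring through $\xi$.

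Primitivity of $\xi$ forces any factorisation $\sigma_N = \bar{\sigma} \circ \xi$ to use a homomorphism $\bar{\sigma}\colon \zz \to \{-1, 1\}$, and such a homomorphism is either trivial or the sign map $n \mapsto (-1)^n$. The hypothesis that $\mathcal{L}$ is not transversely orientable rules out the trivial case (which would force $\sigma_N = 1$), so $\sigma_N(h) = (-1)^{\xi(h)}$ for every $h \in H_N$, which is exactly the parity condition \eqref{cond:orientable:all}. This establishes both implications in the proposition.

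The main obstacle I anticipate is justifying the identification of the pullback of $\mathcal{L}$ to $N_\xi$ with a genuine product $\lambda_\xi \times \rr$. One has to trivialise the $S$-bundle $N_\xi \to \rr$ so that the suspended lamination lifts to a product, and verify that the conclusion about transverse orientability is not disturbed near the lifts of the singular orbits of the suspension flow. Invariance of $\lambda_\xi$ under $\psi$ is the key input that makes the trivialisation possible, after which the remaining steps reduce to the covering-space and group-theoretic computations sketched above.
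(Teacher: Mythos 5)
Your argument is correct and reaches the same core reduction as the paper: $\xi$ is orientable precisely when $\ker\xi \subseteq \ker\sigma_N$, from which the parity condition follows since primitivity of $\xi$ and non-orientability of $\mathcal{L}$ force $\sigma_N(h)=(-1)^{\xi(h)}$. The paper asserts this reduction directly from the splitting $H_N = H_S^\psi\oplus\zz$ of \eqref{eqn:HN:splitting} (with $\ker\xi = H_S^\psi$), whereas you justify it via the product structure $N_\xi\cong S\times\rr$ of the infinite cyclic cover and the covering-space criterion for transverse orientability; this is a useful elaboration of a step the paper leaves implicit, and the product identification you flag as a potential obstacle is standard for suspension laminations over a fibre, so it is not a genuine gap.
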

\begin{proof}Let $S$ be the taut representative of the Poincar\'e-Lefschetz dual of $\xi$. We get a splitting  $H_N = H_S^\psi \oplus \zz$ as in \eqref{eqn:HN:splitting}. Since we assume that $\mathcal{L}$ is not transversely orientable,  $\xi$ is orientable if and only if $H_S^\psi = \ker \sigma_N$. 
		
	Let $u \in H_N$ be the (Hom-)dual of $\xi$. A proof that the condition \eqref{cond:orientable:all} is sufficient and necessary follows from the observation that $\xi(h) = k$ if and only if  $h=h'+k\cdot u$ for some $h' \in H_S^\psi$, $k \in \zz$, and hence $\sigma_N(h)=\sigma_N(h')\cdot\sigma_N(u)^k$.
\end{proof}
To obtain an easy  recipe to find orientable fibred classes we can express the condition \eqref{cond:orientable:all} in terms of a fixed basis of $H_N$.
\begin{corollary}\label{cor:orientable:classes}
	Let~$\face{F}$ be a fibred face of the Thurston norm ball in $H^1(N;\rr)$. Let  $(h_1, \ldots, h_s)$ be a basis of $H_N$ and let $(h_1^\ast, \ldots, h_s^\ast)$ be the dual basis of $H^1(N;\zz)$. Denote by $\mathcal{L}$ the stable lamination associated to $\face{F}$.
	
	If $\mathcal{L}$ is not transversely orientable, but the induced lamination $\mathcal{L}^N \subset N^{fab}$ is transversely orientable
	then a primitive class
	\[\xi=a_1h_1^\ast + \ldots + a_r h_r^\ast \in \inter(\rr_+\hspace{-0.1cm}\cdot \face{F}) \cap H^1(N;\zz)\]
	is orientable if and only if for every $j=1, \ldots, s$
	\begin{equation}\label{eqn:orientable:classes}
		%\pushQED{\qed} 
		a_j = \begin{cases}
			\text{odd} &\text{if }\sigma_N(h_j) = -1\\
			\text{even} &\text{if }\sigma_N(h_j) = 1.
		\end{cases}%\qedhere
		%\popQED
	\end{equation} In particular, with the above assumptions on $\mathcal{L}$ there are orientable fibred classes in $\rr_+ \hspace{-0.1cm}\cdot \face{F}$. \qed
\end{corollary}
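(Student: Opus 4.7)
The plan is to reduce the corollary to Proposition \ref{prop:less:useful:orientable:classes} by translating its parity condition from an arbitrary $h \in H_N$ into coordinates with respect to the chosen basis, and then exhibit an explicit orientable fibred class to verify the "in particular" clause.

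For the equivalence, the dual basis identity $h_j^\ast(h_k) = \delta_{jk}$ gives $\xi(h_j) = a_j$. Hence the "only if" direction is immediate: condition \eqref{cond:orientable:all} applied at $h = h_j$ is exactly condition \eqref{eqn:orientable:classes} at index $j$. For the converse, assume \eqref{eqn:orientable:classes} and write $h = \sum_j c_j h_j$. Then $\xi(h) = \sum_j c_j a_j$, and since $\sigma_N$ is a homomorphism into $\{-1, 1\}$, $\sigma_N(h) = \prod_j \sigma_N(h_j)^{c_j}$. Letting $J = \{j : \sigma_N(h_j) = -1\} = \{j : a_j \text{ odd}\}$, one has
\[
\xi(h) \equiv \sum_{j \in J} c_j \pmod{2} \qquad \text{and} \qquad \sigma_N(h) = (-1)^{\sum_{j \in J} c_j},
\]
so the parities match and Proposition \ref{prop:less:useful:orientable:classes} yields orientability of $\xi$.

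For the "in particular" clause, I first observe that $\sigma_N$ must be nontrivial under these hypotheses: otherwise the edge-orientation homomorphism $\omega$ of $\mathcal{V}$, which factors through $H_N$ by Lemma \ref{lem:edge-orient-factors} (using edge-orientability of $\mathcal{V}^N$), would itself be trivial, forcing $\mathcal{V}$ to be edge-orientable and hence $\mathcal{L}$ to be transversely orientable by Lemma \ref{lemma:EO:to}, contradicting our hypothesis. Pick any integral $\xi_0 \in \inter(\rr_+\hspace{-0.1cm}\cdot \face{F})$ and set $\eta = \sum_{\sigma_N(h_j) = -1} h_j^\ast$. For $m$ sufficiently large, openness of the fibred cone gives $\xi = 2m\xi_0 + \eta \in \inter(\rr_+\hspace{-0.1cm}\cdot \face{F})$, and by construction its coordinates satisfy \eqref{eqn:orientable:classes}. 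Since at least one coordinate is odd, the gcd of its coordinates is odd, so dividing by the gcd produces a primitive class in the same open cone with the same parity pattern, i.e.\ a primitive orientable fibred class.

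The argument is essentially bookkeeping built on top of Proposition \ref{prop:less:useful:orientable:classes}; the only substantive step is the existence construction, whose minor subtleties are verifying nontriviality of $\sigma_N$ (so that some coordinate can be made odd) and confirming that primitivity survives division by the odd gcd. No step appears to pose a serious obstacle.
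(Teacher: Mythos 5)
Your proof is correct and follows essentially the same route the paper intends: the corollary is stated with a \qed{} as an immediate consequence of Proposition~\ref{prop:less:useful:orientable:classes}, and you supply exactly the omitted bookkeeping — translating the parity condition of Proposition~\ref{prop:less:useful:orientable:classes} into coordinates via $\xi(h_j)=a_j$ and $\sigma_N(h)=\prod_j\sigma_N(h_j)^{c_j}$, and then exhibiting an orientable class by perturbing $2m\xi_0$ by $\eta$ and dividing out the (necessarily odd) gcd.
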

\begin{remark*}
	For a fibred face of the Thurston norm ball of the ``magic'' manifold (s776 in the SnapPy census \cite{snappea}) this was proved by Kin and Takasawa \cite[Proposition~3.5]{Kin_Takasawa}.
\end{remark*} 

%Therefore the close relation between $\Theta_{\mathtt{F}}$ and $\Delta_N$ obtained in Corollary \ref{cor:Teich_in_one} is just a reflection of the fact that when the assumptions of Corollary~\ref{cor:Teich_in_one} are satisfied then there are orientable fibred classes in $\rr_+\hspace{-0.1cm}\cdot \face{F}$. By Theorem \ref{thm:orientable:lam} specialising $\Theta_{\mathtt{F}}$ and~$\Delta_N$ at such an orientable class $\xi$ must yield polynomials whose largest in the absolute value real roots are equal up to a sign.  
Using the formulas given in Corollary~\ref{cor:Teich_in_one} one can directly check that if  $\mathcal{L}$ is not transversely orientable, but~$\mathcal{L}^N$ is, then indeed $\xi(\Delta_{N})(-z)$ divides $\xi(\Theta_{\texttt{F}})(z)$ if and only if the condition \eqref{eqn:orientable:classes} is satisfied.

%Note that the factors $(\lbrack \ell_j \rbrack \pm 1)$ which appear in $\Theta_{\texttt{F}}$, but not in $\Delta_N$, when $\face{F}$ is not fully-punctured specialise to polynomials whose only roots are roots of unity. Hence these extra factors do not carry any information on the stretch factor.

We will now show that when the lamination $\mathcal{L}^N$ in the maximal free abelian cover $N^{fab}$ is not transversely orientable, then no fibred class in the corresponding fibred cone is orientable.
\begin{proposition}\label{prop:no:orientable:class}
	Let $\face{F}$ be a fibred face of the Thurston norm ball in $H^1(N;\rr)$. Let $\mathcal{L}$ be the stable lamination of the suspension flow associated to $\face{F}$. If the induced lamination~$\mathcal{L}^N$ in $N^{fab}$ is not transversely orientable, then there is no orientable fibred class in the interior of $\rr_+\hspace{-0.1cm}\cdot \face{F}$.
\end{proposition}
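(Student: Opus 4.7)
The plan is to prove the contrapositive: assume there exists an orientable fibred class $\xi \in \inter(\rr_+\hspace{-0.1cm}\cdot \face{F}) \cap H^1(N;\zz)$, and deduce that $\mathcal{L}^N$ is transversely orientable in $N^{fab}$.

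First, let $S$ be the taut representative of the Poincar\'e--Lefschetz dual of $\xi$, with pseudo-Anosov monodromy $\psi: S \to S$. By Fried's theorem, the suspension flow of $\psi$ coincides, up to isotopy and reparametrisation, with the flow associated to $\face{F}$; since $\mathcal{L}$ is the mapping torus of the stable lamination $\lambda \subset S$ of $\psi$, we have $S \cap \mathcal{L} = \lambda$. The orientability of $\xi$ thus says that $\lambda$ is orientable, and since $S$ is an oriented surface, a $1$-dimensional lamination on $S$ is orientable if and only if it is transversely orientable; hence $\lambda$ admits a transverse orientation in $S$.

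Next, consider the infinite cyclic cover $p:N^\xi \to N$ associated to $\xi:\pi_1(N) \to \zz$. Because $N$ fibres over $S^1$ with fibre $S$ and monodromy $\psi$, we have a homeomorphism $N^\xi \cong S \times \rr$ under which $p$ is induced by the identification $(x,t) \sim (\psi(x), t-1)$, and the suspension flow lifts to the product flow $(x,t) \mapsto (x, t+s)$. Under this homeomorphism, $p^{-1}(\mathcal{L})$ becomes the product lamination $\lambda \times \rr$, so the transverse orientation of $\lambda$ in $S$ determines a transverse orientation of $p^{-1}(\mathcal{L})$ in $N^\xi$.

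Finally, since $\xi$ takes values in the torsion-free group $\zz$, it factors through $\pi_1(N) \to H_N$. The covering $N^{fab} \to N$ therefore factors as $N^{fab} \to N^\xi \to N$, and $\mathcal{L}^N$ is the preimage of $p^{-1}(\mathcal{L})$ under $N^{fab} \to N^\xi$. Pulling back the transverse orientation constructed in the previous step yields a transverse orientation of $\mathcal{L}^N$, establishing the contrapositive.

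The only non-routine point is the explicit identification of $p^{-1}(\mathcal{L})$ with the product lamination $\lambda \times \rr$ inside $N^\xi \cong S \times \rr$. This is the standard description of the infinite cyclic cover of a mapping torus; the singular orbits of the flow cause no difficulty since $\mathcal{L}$ is disjoint from $\mathrm{sing}(\face{F})$, and all transverse orientations involved are pulled back along covering maps, so no monodromy obstruction enters.
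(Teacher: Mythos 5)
Your proof is correct, and it takes a genuinely different route from the paper. The paper works entirely inside the veering-triangulation framework: it uses the splitting $H_1(M;\zz) = H_1(S;\zz)^{\psi} \oplus \zz$, reinterprets orientability of $\xi$ as the condition $H_1(S;\zz)^{\psi} \leq \ker\overline{\omega}$ where $\overline{\omega}$ is the factor of the edge-orientation homomorphism through $H_1(M;\zz)$, and then (splitting into fully-punctured and non-fully-punctured cases) exhibits an element of $H_1(S;\zz)^{\psi}$ with nontrivial image under $\overline{\omega}$ whenever $\mathcal{L}^N$ fails to be transversely orientable. You instead give a direct geometric-topology argument proving the contrapositive: identify the infinite cyclic cover $N^{\xi}$ with $S\times\rr$, observe that $p^{-1}(\mathcal{L})$ is the product lamination $\lambda\times\rr$ (using $\psi(\lambda)=\lambda$), promote the orientation of $\lambda$ to a transverse orientation (valid because $S$ is oriented), extend it over $\lambda\times\rr$, and pull back along $N^{fab}\to N^{\xi}$ (which exists because $\xi$ kills torsion). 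Your approach is more elementary and self-contained, making no reference to veering triangulations or the edge-orientation homomorphism; the paper's approach is less transparent on its own but dovetails with the machinery already set up for Proposition \ref{prop:less:useful:orientable:classes} and Corollary \ref{cor:orientable:classes}, and it localizes the obstruction more explicitly (a torsion class in $H_1(S;\zz)^{\psi}$, or a filling slope, hitting $-1$). Both proofs are correct.
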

\begin{proof}
	Pick $\xi \in \inter(\rr_+\hspace{-0.1cm}\cdot \face{F})\cap H^1(N;\zz)$. Let $M = N \bez \mathrm{sing}(\face{F})$ and denote by $\mathcal{V}$ the veering triangulation of $M$ determined by $\face{F}$.  Let $i^\ast: H^1(N;\rr) \rightarrow H^1(M;\rr)$ be the homomorphism induced by the inclusion of $M$ into $N$.	
	Denote by $S$ the fibre of the fibration of~$M$ determined by  $i^\ast \xi \in H^1(M;\zz)$, and by $\psi$ the monodromy of this fibration.	
	As in \eqref{eqn:H_1N:splits} we get a splitting \[H_1(M;\zz) = H_1(S;\zz)_\psi \oplus \zz.\] 
	
	The edge-orientation homomorphism $\omega$ of $\mathcal{V}$ always factors through \linebreak \mbox{$\overline{\omega}: H_1(M;\zz) \rightarrow \lbrace -1, 1 \rbrace$}.	The class $\xi$ is orientable if and only if $i^\ast\xi$ is orientable, that is if and only if $H_1(S;\zz)_\psi \leq \ker \overline{\omega}$.
	
	First suppose that  $\face{F}$ is fully-punctured. 
	By Corollaries \ref{cor:even_torsion} and \ref{cor:trans_or_edge_or}, if  $\mathcal{L}^N = \mathcal{L}^{fab}$ is not transversely orientable, then there is a torsion element in $H_1(S;\zz)_\psi$ with a nontrivial image under $\overline{\omega}$. 
	
	If $\face{F}$ is not fully-punctured then the lamination $\mathcal{L}^N$ in~$N^{fab}$ is not transversely orientable either because $\mathcal{L}^{fab}$ in $M^{fab}$ is not transversely orientable, or because at least one of the Dehn filling slopes that recover $N$ from $M$ has a nontrivial image under $\overline{\omega}$. In either case we obtain an element of $H_1(S;\zz)_\psi$ with a nontrivial image under $\overline{\omega}$.	
\end{proof}
Proposition \ref{prop:no:orientable:class} and Theorem \ref{thm:orientable:lam} imply that if the lamination $\mathcal{L}^N$ in $N^{fab}$ is not transversely orientable, then the largest in the absolute value real roots of $\xi(\Delta_N)$ and $\xi(\Theta_{\mathtt{F}})$ are different for every fibred class $\xi \in \rr_+\hspace{-0.1cm}\cdot \face{F}$. When the face $\face{F}$ is fully-punctured the relation between $\Theta_{\mathtt{F}}$ and $\Delta_N$ is given in Proposition \ref{prop:twisted_factors}.

\color{black}
	\bibliographystyle{abbrv}
	\bibliography{mybib}
\end{document}